\numberwithin{equation}{section}
\newtheorem{thm}{Theorem}[section]
\newtheorem{lem}[thm]{Lemma}
\newtheorem{prop}[thm]{Proposition}
\newtheorem{cor}[thm]{Corollary}
\newtheorem{rem}[thm]{Remark}
\theoremstyle{definition}
\newcommand{\norm}[1]{\Vert#1\Vert}
\newcommand{\abs}[1]{\vert#1\vert}
\newcommand{\jb}[1]{\langle #1\rangle}
\newcommand{\angles}[2]{\langle #1,#2\rangle}
\newcommand{\BR}{\mathbb{R}}
\newcommand{\BS}{\mathbb{S}}
\newcommand{\BN}{\mathbb{N}}
\newcommand{\BZ}{\mathbb{Z}}
\newcommand{\BFq}{\mathbf{q}}
\newcommand{\ep}{\varepsilon}
\newcommand{\Romanupper}[1]
{\MakeUppercase{\romannumeral #1}}
\begin{document}

\title{Scattering from infinity of the Maxwell Klein Gordon Equations in Lorenz Gauge}

\author{Lili He}
\address{Department of Mathematics \\ Johns Hopkins University \\ Baltimore, MD 21218, USA}
\email{lhe31@jhu.edu}
\thanks{}

\begin{abstract}
We prove global existence backwards from the scattering data posed at infinity for the Maxwell Klein Gordon equations in Lorenz gauge satisfying the weak null condition. The asymptotics of the solutions to the Maxwell Klein Gordon equations in Lorenz gauge were shown to be wave like at null infinity and homogeneous towards timelike infinity in ~\cite{CKL19} and expressed in terms of radiation fields, and thus our scattering data will be given in the form of radiation fields in the backward problem.
We give a refinement of the asymptotics results in ~\cite{CKL19}, and then making use of this refinement, we find a global solution which attains the prescribed scattering data at infinity. Our result corresponds to ``existence of scattering states" in the scattering theory. Our work starts from the approach in \cite{LV17} and is more delicate since it involves nonlinearities with fewer derivatives. The method of proof relies on a suitable construction of the approximate solution from the scattering data, a weighted conformal Morawetz energy estimate and a spacetime version of Hardy inequality.
\end{abstract} 

\maketitle 
\tableofcontents

\addtocontents{toc}{\protect\setcounter{tocdepth}{1}}

\section{Introduction}

In this paper we study the backward problem for the Maxwell Klein Gordon (MKG) equations on $\BR^{1+3}$. The MKG equations describe the interaction between an electromagnetic field represented by a two form $F_{\alpha\beta}$ and a complex-valued scalar field $\phi$, and read
\begin{equation}\label{eq:mkg}
\begin{split}
D^\alpha D_\alpha \phi &=0\\
\partial^\beta F_{\alpha\beta} &=J_\alpha
\end{split}
\end{equation}
where $D_\alpha=\partial_\alpha+iA_\alpha$ is the covariant derivative corresponding to a real-valued gauge potential $A_\alpha$ which represents the electromagnetic field $F_{\alpha\beta}=\partial_\alpha A_\beta-\partial_\beta A_\alpha$. To complete the coupling of $F$ and $\phi$, the current is defined as $J_\alpha=\Im(\phi \overline{D_\alpha \phi})$ where $\Im(z)$ denotes the imaginary part of $z$, and satisfies the conservation law $\partial^\alpha J_\alpha=0$ .
Here we write $\partial_\alpha=\partial/\partial x^\alpha$, $x^0=t$, $\partial_0=\partial_t$ and use the Einstein summation convention with Greek indices $\mu, \nu,\dots$ summed from $0$ to $3$, and lower case Roman indices $j,k,\dots$ summed from $1$ to $3$. Indices are raised and lowered using the Minkowski metric $m=\text{diag }(-1, 1, 1, 1)$.

%The energy-momentum tensor of these equations is given by
%\[
%Q_{\alpha\beta}=\Re(D_\alpha\phi\overline{D_\beta\phi})-\frac{1}{2}m_{\alpha\beta}D^\gamma\phi\overline{D_\gamma\phi}+\frac{1}{2}F_{\alpha\gamma}F_\beta^{ \,\gamma}+\frac{1}{2}{}^\star F_{\alpha\gamma}{}^\star F_\beta^{\,\gamma}\]
%where $\Re(z)$ is the real part of $z$, ${}^\star F$ denotes the hodge star operator of $F$ which is given by ${}^\star F_{\alpha\beta}=\frac{1}{2}\epsilon_{\alpha\beta\mu\nu}F^{\mu\nu}$ and $\epsilon_{\alpha\beta\mu\nu}$ is the volume form on Minkowski space.

The charge of these equations is given by
\begin{equation}\label{eq:def_of_q_for}
\BFq=\int_{\BR^3}\!J_0 \, dx=\int_{\BR^3} \!\Im(\phi\overline{D_0 \phi}) \, dx.
\end{equation}
It is easy to verify that the charge is conserved.

\medskip

We can rewrite the equations ~\eqref{eq:mkg} in the unknowns $\phi$ and $A_\alpha$ as
\begin{equation}\label{eq:mkg_in_A}
\begin{split}
\Box \phi &=-2iA^\alpha\partial_\alpha\phi+A^\alpha A_\alpha\phi-i\lambda\phi\\
\Box A_\alpha &=-J_\alpha+\partial_\alpha\lambda
\end{split}
\end{equation}
where $\Box=\partial^\alpha\partial_\alpha=-\partial_t^2+\Delta$ and $\lambda=\partial^\alpha A_\alpha$.
The system of equations ~\eqref{eq:mkg_in_A} does not uniquely determine $\phi$ and $A$. In fact, MKG system is invariant under the gauge transformation
\[
\phi\to\tilde{\phi}=e^{i\psi}\phi,\quad A_\alpha\to\tilde{A}_\alpha=A_\alpha-\partial_\alpha\psi
\]
for any real-valued function $\psi$. In other words, if $(\phi, A_\alpha)$ satisfies ~\eqref{eq:mkg_in_A}, then so does $(\tilde{\phi}, \tilde{A}_\alpha)$. Thus, we have gauge freedom, that is, we may stipulate a condition that the gauge potential  should satisfy. In this work we fix the gauge by imposing the Lorenz gauge condition
\begin{equation}\label{eq:lorenz}
\lambda=\partial^\alpha A_\alpha=0.
\end{equation}
Now we define the reduced MKG equations to be 
\begin{equation}\label{eq:rmkg}
\begin{split}
\Box \phi &=-2iA^\alpha\partial_\alpha\phi+A^\alpha A_\alpha\phi\\
\Box A_\alpha &=-J_\alpha.
\end{split}
\end{equation}
Then we note that if the Lorenz gauge condition ~\eqref{eq:lorenz} holds, the MKG equations ~\eqref{eq:mkg_in_A} become the reduced MKG equations ~\eqref{eq:rmkg} which is a system of nonlinear wave equations. A key observation is that if $(\phi, A)$ solves ~\eqref{eq:rmkg}, then $\lambda$ satisfies a wave equation
\begin{equation}\label{eq:eq_for_lorenz}
\Box \lambda=\abs{\phi}^2\lambda.
\end{equation}
In view of ~\eqref{eq:eq_for_lorenz}, we see that if the solution to ~\eqref{eq:rmkg} satisfies the Lorenz gauge condition ~\eqref{eq:lorenz} at $t=0$, then ~\eqref{eq:lorenz} holds for all times.

For the study of global existence for MKG equations, first the pioneering works ~\cite{EM82, EM82_2} of Eardly-Moncrief established the global existence result for the Yang-Mills-Higgs equations with sufficiently smooth initial data. Later the result was refined by Klainerman-Machedon in ~\cite{KM94} for the MKG equations with finite energy. 
 
In this paper, we focus on the scattering theory for MKG equations in Lorenz gauge \eqref{eq:rmkg}. In scattering theory for ~\eqref{eq:rmkg}, once we formulate a suitable notion of scattering data, we shall ask the following questions
\begin{itemize}
	\item[(\romannumeral1)] \emph{Existence of scattering states}: for every scattering data, does there exist a global solution to \eqref{eq:rmkg} which has the given scattering data?
	\item[(\romannumeral2)] \emph{Uniqueness of scattering states}: must two global solutions corresponding to the same scattering data be the same?
	\item[(\romannumeral3)]\emph{Asymptotic completeness}: do the above solutions determined by scattering data include all the global solutions to ~\eqref{eq:rmkg}?
\end{itemize}  
We refer to ~\cite{RS3, Yafaev1, Yafaev2} for a general introduction to scattering theory.

In ~\cite{CKL19} Candy-Kauffman-Lindblad formulated a suitable notion of scattering data in terms of radiation field and gave an affirmative answer to question (\romannumeral3) for small initial data solution. They showed that ~\eqref{eq:rmkg} satisfies the weak null condition of Lindblad-Rodnianski ~\cite{LR03, LR05, LR10} and established the detailed asymptotics of the field and the gauge potential. These asymptotics have two parts and can be expressed by the scattering data, one wave like along outgoing light cones at null infinity, and one homogeneous inside the light cone at timelike infinity, and the charge imposes an oscillating factor in the asymptotic behavior for the scalar field.

In this work, we first establish a refinement of the asymptotics obtained in ~\cite{CKL19}. Then making use of the asymptotics of the new form, we answer question (\romannumeral1) in the affirmative for small scattering data. One inspiration for our work is the work of Lindblad-Schlue ~\cite{LV17} where they studied the backward problem for models of Einstein equations by finding a solution that has the asymptotics of the form given in ~\cite{Lind17}.

\subsection{Statement of the main results}

To state our main theorem, we first define some necessary notations and review the asymptotics results in ~\cite{CKL19}. We use the Cartesian coordinates $\{x^0=t, x^1, x^2, x^3\}$ of Minkowski space as well as the null coordinates \[
q=r-t, \quad p=r+t.
\]
We define a null frame $\{L, \underline{L}, e_1, e_2\}$ as
\begin{equation}\label{eq:nullframe}
L=\partial_t+\partial_r=\partial_t+\omega^i\partial_i=L^\mu\partial_\mu, \quad \underline{L}=\partial_t-\partial_r=\partial_t-\omega^i\partial_i=\underline{L}^\mu\partial_\mu, \quad e_B=\omega_B^\mu\partial_\mu
\end{equation}
where $r=\abs{x}$, $w^i=x^i/r$ and $\{e_B\}_{B=1, 2}$ is an orthonormal basis on each sphere in $\BR_x^3$ for constant $t$. The null decomposition of the  gauge potential $A_\alpha$ can be expressed as follows
\[
A_L=L^\mu A_\mu, \quad A_{\underline{L}}=\underline{L}^\mu A_\mu, \quad A_{e_B}=\omega_B^\mu A_\mu.
\]

\subsubsection{Asymptotics Results}

In ~\cite{CKL19} the asymptotic system, which was introduced by H\"{o}rmander in ~\cite{Horm87} to study finite time blow-up and later used by Lindblad in ~\cite{Lind92} to study the global solutions of nonlinear wave equations, was obtained by plugging the expansion 
\[
\phi(t, r\omega)\sim\frac{\ep\Phi_0(q, s, \omega)}{r}, \quad A_\alpha(t, r\omega)\sim\frac{\ep\mathcal{A}_\alpha(q, s, \omega)}{r}\quad\text{where}\quad s=\ep\ln r, \quad\omega=x/r.
\]
into ~\eqref{eq:rmkg} and equating the powers of order $\ep^2/r^2$
\begin{equation}\label{eq:asymtotic_system}
\begin{split}
\partial_s\partial_q \mathcal{A}_\alpha&=-\frac{L_\alpha}{2}\Im(\Phi_0\overline{\partial_q\Phi_0})\\
\partial_s\partial_q\Phi_0&=-i\mathcal{A}_L\partial_q\Phi_0.
\end{split}
\end{equation}
It was shown in ~\cite{CKL19} that the equations ~\eqref{eq:rmkg} satisfy the weak null condition since the corresponding asymptotic system given above has a global solution and the solution together with its derivatives grows at most exponentially in $s$.
It was then proved that in the Lorenz gauge if the initial data $(\phi, D_t\phi, A_j, \partial_tA_j)|_{t=0}$ lies in a weighted Sobolev space, then for each $q\in\BR, \omega=x/r\in \BS^2$, the scalar field is asymptotically 
\begin{equation}\label{eq:asym_of_phi}
\phi(t, r\omega)\sim e^{-i\frac{\BFq}{4\pi}\ln(1+r)}\frac{\Phi(q, \omega)}{r}\quad\text{where}\quad\abs{\Phi}\leq\ep\jb{q}^{-\gamma}
\end{equation}
for some $1/2<\gamma<1$, $0<\ep\ll1$ and is concentrated close to the light cone $t-r$ constant. And asymptotically the null decomposition of the gauge potential is
\begin{gather*}
A_L(t, r\omega)\sim\frac{\BFq}{4\pi r}, \quad A_{e_B}(t, r\omega)\sim\frac{\mathcal{A}_{e_B}(q, \omega)}{r}, \\ A_{\underline{L}}(t, r\omega)\sim\frac{1}{2r}\ln\frac{\jb{t+r}}{\jb{t-r}}\int_{q}^\infty\!\mathcal{J}_{\underline{L}}(\eta, \omega)\, d\eta+\frac{\mathcal{A}_{\underline{L}}(q, \omega)}{r}.
\end{gather*}
where \[
\abs{\mathcal{A}_{e_B}}, \abs{\mathcal{A}_{\underline{L}}}\leq\ep\jb{q_+}^{-\gamma},\quad \mathcal{J}_{\underline{L}}=-2\Im\bigl(\Phi(q, \omega)\overline{\partial_q\Phi(q, \omega)}\bigr)
\] 
with $\jb{q_+}=(1+q_+^2)^{1/2},\, q_+=\max\{q, 0\}$. 
Here we notice that the radiation field for the gauge potential does not decay in the interior of the light cone. To obtain more precise asymptotic behavior, Candy-Kauffman-Lindblad in ~\cite{CKL19} subtracted a better approximation using the formulas from ~\cite{Lind90} and thus concluded that asymptotically the gauge potential is 
\begin{equation}\label{eq:asym_of_A}
A_\alpha(t, r\omega)\sim\frac{\mathcal{A}_\alpha(q, \omega)}{\jb{t+r}}+\frac{\mathcal{K}_\alpha(q, \omega, \frac{\jb{t+r}}{\jb{t-r}})}{\jb{t+r}}.
\end{equation} 
Here, $\mathcal{A}_\alpha$ is concentrated close to the light cones $r-t$ constant, and $\mathcal{K}_\alpha$ is homogeneous of degree $0$ with a logarithmic loss close to the light cone. More precisely, we have
\begin{equation}\label{eq:decay_of_A}
	\begin{split}
\abs{\mathcal{A}_\alpha(q, \omega)} &\leq\ep\jb{q}^{-\gamma},\\
\abs{\mathcal{K}_\alpha(q, \omega, s)} &\leq\ep^2\ln\abs{s}.
	\end{split}
\end{equation}
$\mathcal{A}_\alpha$ is the radiation field for the free wave operator. $\mathcal{K}_\alpha$ has two parts, the backscattering of the wave operator with the quadratic source term $\mathcal{J}_\alpha$, and a term coming from the long range effect of the charge $\BFq$.  
 In particular, in the wave zone $\abs{t-r}\ll t+r$
\begin{equation}\label{eq:K_in_exterior}
A_{\alpha}(t, r\omega)\sim\frac{\mathcal{A}_\alpha(q, \omega)}{\jb{t+r}}+\frac{1}{2r}\ln\frac{\jb{t+r}}{\jb{t-r}}\int_{q}^\infty\!\mathcal{J}_{\alpha}(\eta, \omega)\, d\eta+\frac{\BFq}{4\pi r}\chi_{ex}(q)\delta_{\alpha 0}
\end{equation}
where $\mathcal{J}_\alpha(q, \omega)=L_\alpha(\omega)\Im\bigl(\Phi(q, \omega)\overline{\partial_q\Phi(q, \omega)}\bigr)$, $\chi_{ex}$ is a smooth cutoff such that $\chi_{ex}(q)=1$ if $q\geq 1$, and $\chi_{ex}(q)=0$ if $q\leq 1/2$. We notice that the integral in the second term of the right-hand side in ~\eqref{eq:K_in_exterior} does not decay in the interior and we cannot use the weighted conformal Morawetz energy estimate to estimate it. So we have to subtract off a leading term in the interior.

In ~\cite{CKL19}, it was also shown that at timelike infinity in the interior, say $\abs{x}<t$, we have given $y\in\BR^3$ with $\abs{y}<1$
\begin{equation}\label{eq:timelike_asym}
A_\alpha(t, ty)\sim
\frac{1}{4\pi t}\int_{-\infty}^\infty\!\int_{\BS^2}\!\frac{\mathcal{J}_{\alpha}(\eta, \sigma)}{1-\angles{y}{\sigma}}\, dS(\sigma)\,d\eta.
\end{equation}
Then by re-examining the proof of asymptotics of $A_\alpha$ ~\eqref{eq:timelike_asym} at timelike infinity in ~\cite{CKL19}, we refine the asymptotics for $A_\alpha$ in the interior wave zone, see Proposition ~\ref{prop:refined_asymptotics}. In fact, combining ~\eqref{eq:timelike_asym} and our refinement, we conclude that the asymptotics of $A_\alpha$ take the form
\begin{align}\label{eq:K_ex}
A_{\alpha}(t, r\omega)&\sim\frac{\mathcal{A}_\alpha}{r}+\frac{1}{2r}\ln\frac{\jb{t+r}}{\jb{t-r}}\int_{q}^\infty\!\mathcal{J}_{\alpha}(\eta, \omega)\, d\eta+\frac{\BFq}{4\pi r}\chi_{ex}(q)\delta_{\alpha 0}\quad\text{at null infinity with } q\geq -1,\\
\label{eq:K_in}
\begin{split}
A_\alpha(t, r\omega)
&-\frac{1}{4\pi}\int_{-\infty}^\infty\!\int_{\BS^2}\!\frac{\mathcal{J}_{\alpha}(\eta, \sigma)}{t-r\angles{\omega}{\sigma}}\, dS(\sigma)\,d\eta\\
&\sim\frac{\mathcal{A}_\alpha}{r}-\frac{1}{2r}\ln\frac{\jb{t+r}}{\jb{t-r}}\int_{-\infty}^q\!\mathcal{J}_{\alpha}(\eta, \omega)\, d\eta\quad\text{at interior null infinity }q\leq-1,
\end{split}\\
\label{eq:K_timelike}
A_\alpha(t, ty)&\sim
\frac{1}{4\pi t}\int_{-\infty}^\infty\!\int_{\BS^2}\!\frac{\mathcal{J}_{\alpha}(\eta, \sigma)}{1-\angles{y}{\sigma}}\, dS(\sigma)\,d\eta\quad \text{if} \quad t\gg r.
\end{align}

Combining ~~\eqref{eq:K_ex} and ~\eqref{eq:K_in}, we find that after subtracting a leading term in the interior, $A_\alpha$ has asymptotics at null infinity in the sense that the radiation field and the logarithm term have decay in $q$ both in the exterior and interior. Then we may express the asymptotics of $A_\alpha$ after subtracting a leading term in the interior in a smoothed out version in the wave zone
\begin{equation}\label{eq:unified_K}
\begin{split}
A_\alpha(t, r\omega)&-
\frac{1}{4\pi}\int_{-\infty}^\infty\!\int_{\BS^2}\!\frac{\mathcal{J}_{\alpha}(\eta, \sigma)}{t-r\angles{\omega}{\sigma}}\, dS(\sigma)\,dq\chi_{in}(q)\\
&\sim\frac{\mathcal{A}_\alpha(q, \omega)}{r}-\frac{1}{2r}\ln\frac{\jb{t+r}}{\jb{t-r}}\int_{-\infty}^q\!\mathcal{J}_{\alpha}(\eta, \omega)\, d\eta\chi_{in}(q)\\
&\quad+\Bigl(\frac{1}{2r}\ln\frac{\jb{t+r}}{\jb{t-r}}\int_{q}^\infty\!\mathcal{J}_{\alpha}(\eta, \omega)\, d\eta\Bigr)\bigl(1-\chi_{in}(q)\bigr)+\frac{\BFq}{4\pi r }\chi_{ex}(q)\delta_{\alpha 0}.
\end{split}
\end{equation}
where $\chi_{in}$ is a smooth cutoff such that $\chi_{in}(q)=1$ if $q\leq -1$, and $\chi_{in}(q)=0$ if $q\geq -1/2$. Here the smoothed version of the asymptotics~\eqref{eq:unified_K} at null infinity depends on the cutoff $\chi_{in}$. This is not a big deal as this ambiguity just amounts to a radiation field with sufficient decay in $q$.

\subsubsection{Asymptotic Lorenz gauge condition}

Another key conclusion in ~\cite{CKL19} is that asymptotically the Lorenz gauge condition can be written as
\[
\partial_q(rA_L)\sim 0
\]
at null infinity. More rigorously, in view of ~\eqref{eq:asym_of_phi} and ~\eqref{eq:unified_K} we see that in the wave zone every set of asymptotics $(\phi_{asy}, A_{\alpha, asy})$ of the solution to the MKG equations ~\eqref{eq:mkg_in_A} is expressed in terms of a radiation filed set $(\Phi, \mathcal{A}_\alpha)$, which is exactly the scattering data we impose in the backward problem, as follows
\begin{equation}\label{eq:asym_of_two}
\begin{split}
\phi_{asy}&=e^{-i\frac{\BFq}{4\pi}\ln r}\frac{\Phi(q,\omega)}{r},\\
A_{\alpha, asy}&=\frac{1}{4\pi}\int_{-\infty}^\infty\!\int_{\BS^2}\!\frac{\mathcal{J}_{\alpha}(\eta, \sigma)}{t-r\angles{\omega}{\sigma}}\, dS(\sigma)\,dq\chi_{in}(q)+\text{the right-hand side of ~\eqref{eq:unified_K}}.
%\frac{\mathcal{A}(q, \omega)}{r}-\frac{1}{2r}\ln\frac{\jb{t+r}}{\jb{t-r}}\int_{-\infty}^q\!\mathcal{J}_{\alpha}(\eta, \omega)\, d\eta\chi_{in}(q)\\
%&\quad+\Bigl(\frac{1}{2r}\ln\frac{\jb{t+r}}{\jb{t-r}}\int_{q}^\infty\!\mathcal{J}_{\alpha}(\eta, \omega)\, d\eta\Bigr)\bigl(1-\chi_{in}(q)\bigr)+\frac{\BFq}{4\pi r }\chi_{ex}(q)\delta_{\alpha 0}.
\end{split}
\end{equation}
 Since we discuss the asymptotic form of the Lorenz condition, we focus on the region where $r$ is large and do not need to worry about the singularity of $1/r$ at $r=0$. In fact, we say that a radiation filed set $(\Phi, \mathcal{A}_\alpha)$ satisfies the \emph{asymptotic Lorenz gauge condition} if modulo the terms of order $\mathcal{O}(r^{-2}\ln r)$, it holds true for the corresponding asymptotics $(\phi_{asy}, A_{\alpha, asy})$ that
\begin{equation}\label{eq:asym_Lorenz}
\partial^\alpha A_{\alpha, asy}(r-q, r\omega)=0.
\end{equation}
 for each $q$ and sufficient large $r$. 
Thus the radiation fields cannot be specified freely and they must satisfy the \emph{asymptotic Lorenz gauge condition}.

\medskip

To capture the decay of the scattering data which is given in terms of radiation fields, we define the following norm for any radiation field $F(q, \omega)$ 
\begin{equation}\label{eq:norm_of_radiation}
\norm{F}_{N, \infty, \gamma}:=\sum_{\abs{\alpha}+k\leq N}\sup_{q\in\BR}\sup_{\omega\in\BS^2}\abs{(\jb{q}\partial_q)^k\partial_\omega^\alpha F(q, \omega)}\jb{q}^\gamma
\end{equation}
for some positive integer $N$ and constant $1/2<\gamma<1$.

\subsubsection{Statement of the main theorem}

For the backward problem we simply give a scattering data in terms of a radiation field set $(\Phi, \mathcal{A}_\alpha)$ satisfying the \emph{asymptotic Lorenz gauge condition} and a smallness condition in the sense of norm defined in  ~\eqref{eq:norm_of_radiation}. Then we construct an approximate solution $(\phi_{app}, A_{\alpha,app})$ (see Subsubsection ~\ref{subsubsec:app_soln}) associated to $(\Phi, \mathcal{A}_\alpha)$ such that $(\phi_{app}, A_{\alpha,app})$ coincides with the given asymptotics ~\eqref{eq:asym_of_two} in the wave zone and with $\eqref{eq:K_timelike}$ in the far interior for sufficiently large time $t$. We then show that there is an exact solution $(\phi, A_\alpha)$ to the equations ~\eqref{eq:mkg_in_A} which is asymptotically the same as $(\phi_{app}, A_{\alpha,app})$ when $t\to\infty$. In our analysis, we apply the energy estimates to the remainders 
\[u=\phi-\phi_{app}, \quad v_\alpha=A_\alpha-A_{\alpha, app}.
\]
We are ready to state the main theorem of this paper. 
\begin{thm}[Main theorem]\label{thm:mainthm}
Let $N\geq 7$ with $N\in \BN$, and let $\gamma, \mu$ be given such that $1/2<\gamma<1$, $\mu>0$ and $\mu<\gamma-1/2$. Let $(\Phi(q, \omega), \mathcal{A}_0(q, \omega), \mathcal{A}_1(q, \omega),  \mathcal{A}_2(q, \omega),  \mathcal{A}_3(q, \omega))$ be a radiation field set (scattering data) and define the charge to be the value
\begin{equation}\label{eq:def_of_q}
\BFq=\int_{-\infty}^\infty\!\int_{\BS^2}\!\mathcal{J}_0(q, \omega)\, dS(\omega)\,dq=\int_{-\infty}^\infty\!\int_{\BS^2}\!-\Im\bigl(\Phi(q, \omega)\overline{\partial_q\Phi(q, \omega)}\bigr)\, dS(\omega)\,dq.
\end{equation}
Then there exists a small absolute constant $\ep_0$, which depends on $N, \gamma, \mu$, such that for any radiation field set $(\Phi, \mathcal{A}_\alpha)$ given above which satisfies the asymptotic Lorenz gauge condition (see the detail in Proposition~\ref{prop:asy_Lorenz}) and smallness condition
\[
\ep:=\norm{\Phi}_{N, \infty, \gamma}+\sum_{\alpha=0}^3\norm{\mathcal{A}_\alpha}_{N, \infty, \gamma}\leq\ep_0,
\]
there exists a global solution $(\phi, A_\alpha)$  to the equations ~\eqref{eq:mkg_in_A} with asymptotics ~\eqref{eq:asym_of_two} at null infinity and ~\eqref{eq:K_timelike} at far interior timelike infinity and satisfying the Lorenz gauge condition, and the following properties of the solution hold:
\begin{itemize}
	\item [(i)] The remainders satisfy the energy bounds
	\begin{equation}\label{eq:energy_in_thm}
	\norm{w^{\frac{1}{2}}Z^Iu}_{L^2(\Sigma_t)}+\norm{w^{\frac{1}{2}}Z^Iv_\alpha}_{L^2(\Sigma_t)}\leq C\ep\jb{t}^{-\frac{\gamma}{2}+\frac{\mu}{2}+\frac{1}{4}} \quad \text{for} \quad \abs{I}\leq N-4
	\end{equation}
	where
	\[
		w(q)=\begin{cases}
	1+(1+\abs{q})^\mu, \quad &q=r-t<0\\
	1+(1+\abs{q})^{-\gamma+\frac{1}{2}}, \quad &q=r-t\geq 0
	\end{cases}
	\]
	\item[(ii)] The remainders satisfy the pointwise estimates
	\begin{equation}\label{eq:decay_in_thm}
	 \abs{w^{\frac{1}{2}}Z^Iu}+\abs{w^{\frac{1}{2}}Z^Iv_\alpha}\leq
	 C\ep\jb{t+r}^{-1}\jb{t-r}^{-\frac{1}{2}}\jb{t}^{-\frac{\gamma}{2}+\frac{\mu}{2}+\frac{1}{4}} \quad \text{for}  \quad \abs{I}\leq N-6
	 \end{equation}
\end{itemize}
where $Z\in\{\partial_\mu, \, x_\mu\partial_\nu-x_\nu\partial_\mu, \, t\partial_t+r\partial_r\}$ and the constant $C$ depends only on $N, \gamma, \mu$.
\end{thm}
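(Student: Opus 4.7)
The plan is to construct an explicit approximate solution from the scattering data, reduce the problem to solving a wave-type system for the remainders $(u, v_\alpha) = (\phi-\phi_{app}, A_\alpha - A_{\alpha,app})$ with trivial data at infinity, and then close a backward-in-time iteration using a weighted conformal Morawetz energy estimate together with a spacetime Hardy inequality.

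First, I would build the approximate solution by patching together three pieces, guided by \eqref{eq:asym_of_two} and \eqref{eq:K_timelike}. For the scalar field, take
\[
\phi_{app}(t,r\omega) = e^{-i\frac{\mathbf{q}}{4\pi}\ln(1+r)} \frac{\Phi(q,\omega)}{r}\chi_{wave}(t,r),
\]
where $\chi_{wave}$ confines to the region $r\gtrsim\jb{t-r}$ and where the oscillating factor accounts for the long-range effect of the charge. For the gauge potential, define $A_{\alpha,app}$ by the right-hand side of \eqref{eq:unified_K} plus the interior integral from \eqref{eq:K_timelike} smoothly glued in via $\chi_{in}$. The key verification is that this approximate solution satisfies the reduced MKG system \eqref{eq:rmkg} up to source terms with integrable-in-time $L^2$ decay; here the asymptotic Lorenz gauge condition \eqref{eq:asym_Lorenz} is what cancels the most dangerous resonant interaction in the wave equation for $\phi_{app}$ and keeps $\Box A_{\alpha,app} + J_{\alpha,app}$ decaying like $t^{-2-\delta}$ in the wave zone, modulo controllable logarithmic corrections.

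Next, I would derive the equations for $u$ and $v_\alpha$. They take the schematic form
\[
\Box u = -2i A^\beta_{app}\partial_\beta u - 2i v^\beta\partial_\beta \phi_{app} + \text{cubic} + F_\phi,\qquad
\Box v_\alpha = -\tilde J_\alpha + F_{A,\alpha},
\]
where the $F$'s are the approximate-solution errors computed in the first step. To produce a solution, I set up a sequence of backward problems: on each truncated slab $\{0\le t\le T_n\}$ solve \eqref{eq:rmkg} backward from zero Cauchy data at $t=T_n$, let $T_n\to\infty$, and extract a limit from uniform estimates.

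The heart of the argument is the uniform energy estimate. Acting with the commuting vector fields $Z\in\{\partial_\mu,\,\Omega_{\mu\nu},\,S\}$, the weighted conformal Morawetz multiplier associated to the weight
\[
w(q)=\begin{cases} 1+(1+|q|)^\mu,&q<0,\\ 1+(1+|q|)^{-\gamma+1/2},&q\ge 0,\end{cases}
\]
is applied to the commuted remainder equations. The good null structure of the MKG nonlinearities (the weak null condition verified in \cite{CKL19}) combined with the extra $q$-weight produces a positive bulk spacetime term in the exterior that dominates the semilinear contributions $A_{app}\partial u$ and $v\,\partial\phi_{app}$; in the interior the time growth $\jb{t}^{1/2+\mu}$ inherent in the conformal multiplier is absorbed into the factor $\jb{t}^{-\gamma+\mu+1/2}$ appearing on the right of \eqref{eq:energy_in_thm}. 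A spacetime Hardy inequality is needed to convert the bulk $q$-weighted norm of $v_\alpha$ (without derivatives) into the energy of its derivatives, closing the loop between the wave equation for $v_\alpha$ and the zeroth-order coefficients $A^\beta_{app}\partial u$ in the $\phi$ equation.

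Once the energy bound \eqref{eq:energy_in_thm} is established uniformly in $T_n$, the pointwise estimate \eqref{eq:decay_in_thm} follows from a weighted Klainerman--Sobolev inequality applied to the commuted remainders, losing six orders of vector fields. Passing to the limit gives a global solution attaining the prescribed asymptotics; the Lorenz gauge condition propagates from infinity via \eqref{eq:eq_for_lorenz} applied to $\lambda$, whose asymptotic vanishing is encoded in \eqref{eq:asym_Lorenz}. I expect the main obstacle to be the construction and error analysis of $A_{\alpha,app}$ across the transition between the wave zone and the timelike interior: the logarithmic backscattering term and the charge contribution both produce borderline-non-integrable errors, and it is only after simultaneously using the asymptotic Lorenz condition, the cancellation between \eqref{eq:K_in} and \eqref{eq:K_timelike}, and the $\mu<\gamma-1/2$ slack in the weight that one gains the decay needed to close the Morawetz estimate.
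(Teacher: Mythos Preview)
Your overall architecture is correct and matches the paper: build the approximate solution from the refined asymptotics, solve backward from trivial data at time $T$ with a cutoff, run the weighted conformal Morawetz estimate on the commuted remainders, use the spacetime Hardy inequality and weighted Klainerman--Sobolev, take $T\to\infty$, and finally propagate the Lorenz condition via the linear wave equation for $\lambda$.

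There is, however, a genuine gap in your energy step. You write that ``the good null structure of the MKG nonlinearities \ldots\ combined with the extra $q$-weight produces a positive bulk spacetime term in the exterior that dominates the semilinear contributions $A_{app}\partial u$ and $v\,\partial\phi_{app}$.'' This is not how the closure works. The worst pieces of those terms are precisely the ones with \emph{no} null structure: after null decomposition one is left with $A_{L,app}\,\partial_q Z^I u$ and $\Im(\phi_{app}\,\overline{\partial_q Z^I u})$ at the top order. When these are paired with the $\jb{t+r}^2 r^{-1} L(rZ^I\,\cdot\,)$ piece of the multiplier, the factor $\partial_q Z^I u$ cannot be estimated directly: the only pointwise control is $|\partial_q Z^I u|\lesssim\jb{t-r}^{-1}\sum_{|J|\le|I|+1}|Z^J u|$, which raises the order by one and cannot be fed back into the Hardy inequality (which is applied to $Z^K u$ with $|K|\le |I|$). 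The bulk term from the weight does not absorb this.

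The device the paper uses, and which your proposal does not mention, is a double integration by parts in $q$. One first moves $\partial_q$ from $\partial_q Z^I u$ onto $L(rZ^I v_\alpha)$ and uses $\Box\psi=2r^{-1}\partial_q L(r\psi)+r^{-2}\Delta_\omega\psi$ to replace $\partial_q L(rZ^I v_\alpha)$ by $\tfrac{r}{2}\Box Z^I v_\alpha-\tfrac{1}{2r}\Delta_\omega Z^I v_\alpha$. Substituting the equation for $\Box Z^I v_\alpha$ reproduces the same bad term $\Im(\phi_{app}\,\overline{\partial_q Z^I u})$, but now multiplied by $\Im(\phi_{app}\,\overline{Z^I u})$ rather than by $r^{-1}L(rZ^I v_\alpha)$; a second integration by parts in $q$ then leaves only undifferentiated $Z^I u$ against $\partial_q(\phi_{app}^2)$, which has enough $\jb{t-r}$-decay to be handled by the spacetime Hardy inequality. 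An analogous (slightly more delicate) version is needed for $A_{L,app}\,\partial_q Z^I u$, where one must first isolate the exact $\tfrac{\mathbf{q}}{4\pi r}$ piece of $A_{L,app}$ (this is where the asymptotic Lorenz condition enters the energy argument, not only the approximate-solution error analysis) and track its commutation with $Z^I$ precisely; a crude bound on the lower-order commutator terms would again lose the estimate in the exterior. Without this mechanism the bootstrap does not close.
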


\begin{rem}
	We note that the asymptotics of $A_\alpha$ at timelike infinity and $\phi$ at null infinity were established in ~\cite{CKL19}. In this work, we shall give a refinement of the asymptotics of $A_\alpha$ at null infinity ~\eqref{eq:unified_K}, especially at interior null infinity.(see Proposition ~\ref{prop:refined_asymptotics} and Remark \ref{rem:refined_asym})
	% $A_\alpha$ has the asymptotics ~\eqref{eq:K_timelike} in far interior timelike infinity, and $(\phi, A_\alpha)$ has the asymptotics ~\eqref{eq:asym_of_two} at null infinity. The asymptotics of $A_\alpha$ at null infinity, especially at interior null infinity, is a direct result of Proposition ~\ref{prop:refined_asymptotics} which is a refinement of  ~\cite[Proposition 19]{Lind17}(see Remark \ref{rem:refined_asym}).
\end{rem}

\begin{rem}\label{rem:charge}
We define $\BFq$ to be $\BFq=\int_{\BR^3}\Im(\phi_0\overline{\phi_1})\, dx$ with $(\phi_0, \phi_1)=(\phi, D_t\phi)|_{t=0}$ in the forward problem. However, here $\BFq$ is defined as in ~\eqref{eq:def_of_q} in terms of the radiation field instead of the scalar field itself. See the rationale for the definition ~\eqref{eq:def_of_q} in Proposition ~\ref{prop:definition_of_charge}.
\end{rem}

\subsection{Related works}

The investigation of the long time dynamics of the solutions to the MKG equations originates in the work of Choquet-Bruhat and Christodoulou in ~\cite{C-BC81} where they applied the conformal compactification method under the assumption of zero charge. Decay estimates for the linear Maxwell field and Yang-Mills equations were obtained in the work of ~\cite{CK90} and ~\cite{Shu91} respectively. The pointwise decay property of global solutions to ~\eqref{eq:mkg} for small initial data with nonzero charge was proved in Lindblad-Sterbenz ~\cite{LS06} after the outline Shu provided in ~\cite{Shu92}. A simpler proof was later given by Bieri-Miao-Shahshahani in ~\cite{BMS17}. Recently, the decay of the solutions to ~\eqref{eq:mkg} with large initial data was studied in ~\cite{Yang16, Yang18, YangYu19}. Regarding the long time dynamics of solutions to the  massive MKG equations, we refer the reader to ~\cite{Psa99, Psa99_2, KWY20, FWY19} and references therein.

The scattering problem for MKG equations was recently studied by Taujanskas. He showed in ~\cite{Tau19} the existence of a bounded invertible scattering operator for the MKG equations in de Sitter spacetime by exploiting the conformal compactification method. However, this work cannot be extended to the Minkowski space as the charge will cause a long rang effect on the asymptotic behavior of the solutions.

There are some previous results on the scattering problem for some other nonlinear dispersive and hyperbolic equations. In \cite{FST87}, Flato-Simon-Taflin proved the existence of the modified wave operators for Maxwell-Dirac equations in $1+3$ dimensions. In ~\cite{LSo05, LSo06}, Lindblad-Soffer studied the scattering for one dimensional nonlinear Klein Gordon equation and one dimensional nonlinear Schr\"odinger equation respectively by constructing approximate solutions at infinity. In ~\cite{Wang10, Wang13}, Wang considered the radiation field for Einstein vacuum equation in harmonic gauge in higher dimensions using conformal compactification method. However, the three spatial dimension case is more delicate since the mass has a long range effect and there is an additional logarithmic leading term for some metric component. Lindblad-Schlue ~\cite{LV17} studied the backward problem for models of Einstein equations finding a solution that has the asymptotics of the form given in ~\cite{Lind17}. A scattering theory construction on dynamical vacuum black holes was given in ~\cite{DHR}. A scattering theory for the wave equation on Kerr black hole exteriors and extremal {R}eissner-{N}ordstr\"{o}m was established in \cite{DRS} and ~\cite{AAG} respectively. In ~\cite{Yu20}, Yu proved the existence of the modified wave operators for a scalar quasilinear wave equation satisfying the weak null condition.

%The goal in this paper is to obtain the global existence of the solution to the backward problem for the MKG equations in the Lorenz gauge. More precisely, we find a global solution that has the asymptotics of the form obtained in ~\cite{CKL19}. 
%The results of this paper rely on a spacetime version of Hardy inequality which allows us to employ the spacetime integral with a special derivative in the weighted conformal Morawetz energy inequality.

\subsection{Sketch of the proof}

\subsubsection{Discussion of the difficulties}\label{subsubsec:difficulties}

First we discuss the main difficulty in our analysis. In view of ~\eqref{eq:asymtotic_system}, we consider the following simplified model 
\begin{equation}\label{eq:simplified_semilinear_model}
\begin{split}
\Box \psi&=0,\\
\Box \phi&=2i\psi\partial_t\phi,\\
\Box \varphi&=-2\Im(\phi\overline{\partial_t\phi}).
\end{split}
\end{equation}
which has a similar weak null structure as MKG equations in Lorenz gauge. 

This model is different from that studied in ~\cite{LV17} whose form is $\Box \psi =Q(\partial\psi, \partial\phi), \Box \phi=\tilde{Q}(\partial\phi, \partial\varphi),\Box \varphi=(\partial_t\psi)^2$, where $Q$ and $\tilde{Q}$ are two null forms. First, since the quadratic nonlinearities in the model considered in ~\cite{LV17} only depend on the derivatives, they can be put in the standard energy norm $\int\!\abs{\partial\psi}^2+\abs{\partial\phi}^2+\abs{\partial\varphi}^2$. However, our model ~\eqref{eq:simplified_semilinear_model} contains the dangerous terms $2i\psi\partial_t\phi$ and $-2\Im(\phi\overline{\partial_t\phi})$ where $\psi$ and $\phi$ cannot be placed in the standard energy norm due to the lack a derivative $\partial$. Second, as we observe in the forward problem, we have $\abs{\psi}, \abs{\phi}, \abs{\varphi}\sim\jb{t}^{-1}$ in the far interior region $t>2r$. Then when we consider these two models in the region $t>2r$, for our model ~\eqref{eq:simplified_semilinear_model}, both the left-hand side and the right-hand side decay at the rate of  $\jb{t}^{-3}$ because the derivative decreases the order of decay rate. However, for the model in ~\cite{LV17}, the left-hand side decays like $\jb{t}^{-3}$ while the right-hand side has the decay rate $\jb{t}^{-4}$. Therefore, the nonlinearities in the model in ~\cite{LV17} give us more room as far as decay in the interior.

\subsubsection{The charge $\BFq$}

In the forward problem, it was observed that the asymptotics of $rA_L$  close to the light cone is $\frac{\BFq}{4\pi}$. On the other hand it can also be deduced from the asymptotics for the wave equation with a quadratic source term given by $\Im(\Phi\overline{\partial_q\Phi})$. Therefore we expect that there is a relation between $\BFq$ and $\Im(\Phi\overline{\partial_q\Phi})$. In fact, this relation is given by ~\eqref{eq:def_of_q} which we will prove rigorously in Proposition ~\ref{prop:definition_of_charge}.

\subsubsection{Construction of the approximate solution}\label{subsubsec:app_soln}

First, we need to take into account the \emph{asymptotic Lorenz gauge condition}. In fact, we derive an ODE that the radiation field set $(\Phi, \mathcal{A}_\alpha)$ should satisfy
\begin{equation}\label{eq:condition_from_asymLorenz}
\partial_q\Bigl(L^\alpha(\omega)\mathcal{A}_\alpha(q, \omega)+\frac{\BFq}{4\pi}\chi_{in}(q)+\frac{\BFq}{4\pi}\chi_{ex}(q)\Bigr)=0.
\end{equation}
See Proposition ~\ref{prop:asy_Lorenz} for the derivation of this ODE.

Next, in view of ~\eqref{eq:asym_of_two}, and ~\eqref{eq:K_timelike}, we construct $(\phi_{app}, A_{\alpha, app})$ as
\begin{equation}\label{eq:app_in_proofstrategy}
	\begin{split}
		\phi_{app}(t, r\omega)&=e^{-i\frac{\BFq}{4\pi}\ln r}\frac{\Phi(q, \omega)}{r}\chi\Bigl(\frac{\jb{q}}{r}\Bigr)\\
		A_{\alpha, app}(t, r\omega)&=\frac{1}{4\pi}\int_{-\infty}^{\infty}\!\int_{\BS^2}\!\frac{\mathcal{J}_{\alpha}(\eta, \sigma)}{t-r\angles{\omega}{\sigma}}\, dS(\sigma)\,d\eta\chi_{ex}(\frac{t}{12})\chi_{in}(q)\\
		&\qquad-\frac{1}{2r}\ln\frac{2r}{\jb{t-r}}\int_{-\infty}^q\!\mathcal{J}_{\alpha}(\eta, \omega)\, d\eta\chi\Bigl(\frac{\jb{q}}{r}\Bigr)\chi_{in}(q)\\
		&\qquad+\frac{1}{2r}\ln\frac{2r}{\jb{t-r}}\int_{q}^\infty\!\mathcal{J}_{\alpha}(\eta, \omega)\, d\eta\chi\Bigl(\frac{\jb{q}}{r}\Bigr)\Bigl(1-\chi_{in}(q)\Bigr)\\
		&\qquad+\frac{\BFq}{4\pi r}\delta_{\alpha 0}\chi_{ex}(q)+\frac{\mathcal{A}_{\alpha}(q, \omega)}{r}\chi\Bigl(\frac{\jb{q}}{r}\Bigr).
		\end{split}
	\end{equation}
Here $\chi(s)=1$ when $s\leq 1/2$ and $\chi(s)=0$ when $s\geq 3/4$, which is used to localize in the wave zone, away from the origin. Indeed this is a ``good approximate solution" in the sense that module terms of order $\mathcal{O}(r^{-3}\ln r)$, $(\phi_{app}, A_{\alpha, app})$ solves the reduced MKG equations ~\eqref{eq:rmkg}, i.e.
\begin{equation}\label{eq:goodapp}
\begin{split}
\Box \phi_{app} &\sim-2iA^{\alpha, app}\partial_\alpha\phi_{app}+A^{\alpha, app} A_{\alpha, app}\phi_{app}\\
\Box A_{\alpha, app} &\sim-\Im(\phi_{app}\overline{\partial_\alpha\phi_{app}})+\abs{\phi_{app}}^2A_{\alpha, app}.
\end{split}
\end{equation}
\begin{rem}
	The equation ~\eqref{eq:condition_from_asymLorenz} for $\mathcal{A}_\alpha$ implies that $A_{L, app}=\frac{\BFq}{4\pi r}+\mathcal{O}(r^{-2}\ln r)$, which coincides with the asymptotics of $A_L$ in ~\cite{CKL19}.
\end{rem}
	\begin{rem}
		Observing the first term in the construction of $A_{\alpha, app}$, we find that there is no singularity at $r=0$ when the derivative falls on the integral term, however, if the derivative falls on the cutoff $\chi_{in}(q)$, $\frac{1}{r}$ is generated which leads to a singularity at $r=0$. Therefore, we use the cutoff $\chi_{ex}(\frac{t}{12})$ to remove the singularity at $r=0$. 
	\end{rem}

\subsubsection{Reduced MKG equations}

 Next, we expect to find a sequence of  exact solutions $(\phi_T, A_{\alpha T})$ that take the form $(\phi_T, A_{\alpha T})=(\phi_{app}+u_T, A_{\alpha, app}+v_{\alpha T})$ to the reduced MKG equations ~\eqref{eq:rmkg}. To this end we consider the following equations for $(u_T, v_{\alpha T})$ with trivial data at $t=2T$ 
 \begin{equation}\label{eq:eqn_for_remainders}
 \begin{split}
 \Box u &=\widetilde{\chi}\bigl(\frac{t}{T}\bigr)\Bigl(-2iA^\alpha\partial_\alpha\phi+A^\alpha A_\alpha \phi-\Box\phi_{app}\Bigr)\\
 \Box v_{\alpha} &=\widetilde{\chi}\big(\frac{t}{T}\big)\Bigl(-J_\alpha-\Box A_{\alpha, app}\Bigr), \quad J_\alpha=\Im(\phi\overline{(\partial_\alpha+iA_\alpha)\phi})
 \end{split}
\end{equation}
 where $\widetilde{\chi}$ is a smooth cutoff such that $\widetilde{\chi}(s)=1$ when $s\leq 1$, and $\widetilde{\chi}(s)=0$ when $s\geq 2$. Clearly, $(\phi_T, A_{\alpha T})=(\phi_{app}+u_T, A_{\alpha, app}+v_{\alpha T})$ is an exact solution to the reduced MKG equations ~\eqref{eq:rmkg} when $t\leq T$. In Section ~\ref{sec:4}, ~\ref{sec:5}, we show that ~\eqref{eq:eqn_for_remainders} has a solution $(u_T, v_{\alpha T})$ for $t\leq 2T$ and that the limit $(u, v_\alpha)=\lim_{T\to\infty}(u_T, v_{\alpha T})$ exists in the energy norm used in this work. In this way we obtain a global solution $(\phi, A_{\alpha})=(\phi_{app}+u, A_{\alpha, app}+v_{\alpha})$ to the reduced MKG equations ~\eqref{eq:rmkg} which is asymptotically the same as $(\phi_{app}, A_{\alpha, app})$ when $t\to\infty$, in the sense that the energy norm of $(u, v_\alpha)$ approaches 0 as $t\to\infty$.
 
 \begin{rem}
 If we solve the backward problem for the reduced MKG equations ~\eqref{eq:rmkg} directly, we face the obstacle that while $(u_T, v_{\alpha T})$ has trivial data at $t=2T$, this does not hold for $(Z^Iu_T, Z^Iv_{\alpha T})$ for $\abs{I}\geq 1$. Here we introduce the cutoff $\widetilde{\chi}$ to overcome this problem.
 \end{rem}

 \subsubsection{Weighted conformal Morawetz energy estimate in backward direction}
 
 We establish a backward version of weighted conformal Morawetz energy estimate in Proposition ~\ref{prop:weightedenergyestimate}
 \begin{equation}\label{eq:energyineq_in_intro}
 E^w[\varphi](t_1)+\int_{t_1}^{t_2}\!\int_{\Sigma_t}\!\jb{t+r}^2\abs{\frac{1}{r}L(r\varphi)}^2 \abs{w'}\,dx\,dt
 \leq E^w[\varphi](t_2)+\int_{t_1}^{t_2}\!\int_{\Sigma_t}\!\Re{\Bigl(\frac{2}{r}\overline{K}_0(r\varphi)\overline{\Box \varphi}\Bigr)}w\,dx\,dt
 \end{equation}
 where $\overline{K}_0=\bigl(\jb{t+r}^2L+\jb{t-r}^2\underline{L}\bigr)/2$, 
 \[E^w[\varphi](t)=\frac{1}{2}\int_{\Sigma_t}\!\Bigl(\jb{t+r}^2\abs{\frac{1}{r}L(r\varphi)}^2+\left(\jb{t+r}^2+\jb{t-r}^2\right)\abs{\slashed{\partial}\varphi}^2+\jb{t-r}^2\abs{\frac{1}{r}\underline{L}(r\varphi)}^2\Bigr)w\,dx\]
 with the weight $w=w(q)$ defined such that $w'\leq 0$. This energy estimate ~\eqref{eq:energyineq_in_intro} has two advantages: first since $\sum_{\abs{I}\leq 1}\int\!\abs{Z^I\varphi}w\,dx\lesssim E^w[\varphi](t)$(see Proposition ~\ref{prop:control_of_norm} ), $\varphi$ can be put in $E^w[\varphi](t)$ ; second, due to the weight, an additional spacetime integral is generated in the left-hand side in ~\eqref{eq:energyineq_in_intro}, which plays a crucial role in our analysis.
 
 The energy estimate ~\eqref{eq:energyineq_in_intro} is a generalization of the classical conformal Morawetz estimate in ~\cite{M62}. We remark that there were other variants of the classical conformal Morawetz estimate. Lindblad-Sterbenz introduced in ~\cite{LS06} a weighted fractional Morawetz estimate in the forward direction. In ~\cite{LV17}, Lindblad-Schlue established  the fractional Morawetz estimate in the backward direction.

\subsubsection{Estimates for the main error terms}

 Now we explain how we deal with the difficulty we discussed in Subsubsection ~\ref{subsubsec:difficulties}. Here we only display the treatment of the most delicate  error terms in the energy estimate. For simplicity, we consider the equation for $v_\alpha$ as an example and $u$ can be handled in an analogous way. In view of ~\eqref{eq:goodapp} and ~\eqref{eq:eqn_for_remainders}, we think of the equation for $v_\alpha$ as
\begin{align}\label{eq:simplifiedmodel1}
\Box v_\alpha&=-\Im(u\overline{\partial_\alpha \phi_{app}})-\Im(\phi_{app}\overline{\partial_\alpha u})+\cdots%\\\label{eq:simplifiedmodel2}
%\Box u &=-\underbrace{2iv_L\frac{\partial_q\Phi}{r}\chi(\frac{\jb{t-r}}{r})}_{\text{type \Romanupper{1}}}-\underbrace{2iA_{L, app}\partial_q u}_{\text{type \Romanupper{2}}}+\{\text{better terms}\}
\end{align}
Then we commutate the vector fields $Z$ through the equation ~\eqref{eq:simplifiedmodel1} and apply ~\eqref{eq:energyineq_in_intro} to the following equations
\begin{equation}\label{eq:higherordereqn}
\Box Z^I v_\alpha=-\sum_{\abs{J}\leq\abs{I}}c_JZ^J\bigl(\Im(u\overline{\partial_\alpha\phi_{app}})\bigr)-\sum_{\abs{J}\leq\abs{I}}c_JZ^J\bigl(\Im(\phi_{app}\overline{\partial_\alpha u})\bigr)+\cdots=B+C+\cdots
\end{equation}
where $c_J$ are constants for all $\abs{I}\leq N-3$. Owing to  $\abs{\partial\varphi}\lesssim\jb{t-r}^{-1}\sum_{\abs{I}\leq 1}\abs{Z^I\varphi}$ and $\abs{Z^J\phi_{app}}\lesssim\ep\jb{t+r}^{-1}\jb{t-r}^{-\gamma}$ for all $J$ which is from ~\eqref{eq:app_in_proofstrategy}
%$r\sim\jb{t+r}$ in the support of $\phi_{app}$
, we see that 
\[
\abs{B}+\abs{C}\lesssim\frac{1}{\jb{t-r}}\sum_{\abs{J}+\abs{K}\leq\abs{I}+1}\abs{Z^J \phi_{app}}\abs{Z^Ku}\lesssim\frac{\ep}{\jb{t+r}\jb{t-r}^{1+\gamma}}\sum_{\abs{K}\leq\abs{I}+1}\abs{Z^K u}.
\]

First we deal with the component $(\jb{t-r}^2\underline{L})/2$ in the multiplier $\overline{K}_0$. We have
\begin{align*}
&\int_{t_1}^{t_2}\!\int_{\Sigma_t}\!\Re{\Bigl(\frac{\jb{t-r}^2}{r}\underline{L}(rZ^Iv_\alpha)\overline{B+C}\Bigr)}w\,dx\,dt\\
&\quad\lesssim\ep\int_{t_1}^{t_2}\!\int_{\Sigma_t}\!\frac{\jb{t-r}}{r}\abs{\underline{L}(rZ^Iv_\alpha)}\sum_{\abs{K}\leq\abs{I}+1}\frac{\abs{Z^Ku}}{\jb{t+r}}w\,dx\,dt\\
&\quad\lesssim\sum_{\abs{K}\leq\abs{I}+1}\int_{t_1}^{t_2}\!\int_{\Sigma_t}\!\Bigl(\frac{\jb{t-r}^2}{r^2}\abs{\underline{L}(rZ^Iv_\alpha)}^2+\abs{Z^Ku}^2\Bigr)\frac{\ep}{\jb{t+r}}w\,dx\,dt\\
&\quad\lesssim\int_{t_1}^{t_2}\!\frac{\ep \Bigl(E^w[Z^Iv_\alpha](t)+E^w[Z^Iu](t)\Bigr)}{\jb{t}}\,dt
\end{align*}
where we use $\sum_{\abs{I}\leq 1}\int\!\abs{Z^I\varphi}w\,dx\lesssim E^w[\varphi](t)$(see Proposition ~\ref{prop:control_of_norm} ) in the last step.

The treatment of $(\jb{t+r}^2L)/2$ component needs more delicate analysis. We rewrite the term $C$ in the right-hand side of ~\eqref{eq:higherordereqn} as
\[
C=-\Im(\phi_{app}\overline{\partial_\alpha Z^Iu})+\Bigl(\Im(\phi_{app}\overline{\partial_\alpha Z^Iu})-\sum_{\abs{J}\leq\abs{I}}c_JZ^J\bigl(\Im(\phi_{app}\overline{\partial_\alpha u})\bigr)\Bigr)=C_1+C_2.
\] 
Therefore, the highest order term $\Im(\phi_{app}\overline{\partial_\alpha Z^Iu})$ only occurs in $C_1$. For the terms $B$ and $C_2$, we have the following estimate
\begin{equation}
\begin{split}
\abs{B}+\abs{C_2}&\lesssim\frac{1}{\jb{t-r}}\sum_{ \abs{K}\leq\abs{I}, \abs{J}+\abs{K}\leq\abs{I}+1}\abs{Z^J\phi_{app}}\abs{Z^Ku}\\
&\lesssim\frac{\ep}{\jb{t+r}\jb{t-r}^{1+\gamma}}\sum_{\abs{K}\leq\abs{I}}\abs{Z^K u}.
\end{split}
\end{equation}
Since $Z^Ju=\partial_tZ^Ju=0$ for all $J$, we are able to use a spacetime version of Hardy inequality  (see Lemma ~\ref{lem:poincareineq}): if $\varphi=\partial_t\varphi=0$,
\begin{equation}\label{eq:poincare_in_intro}
\int_{t_1}^{t_2}\!\int_{\Sigma_t}\!\abs{\varphi}^2 \abs{w'}\,dx\,dt\lesssim \int_{t_1}^{t_2}\!\int_{\Sigma_t}\!\jb{t+r}^2\abs{\frac{1}{r}L(r\varphi)}^2 \abs{w'}\,dx\,dt
\end{equation}
to obtain
\begin{align*}
&\int_{t_1}^{t_2}\!\int_{\Sigma_t}\!\Re{\Bigl(\frac{\jb{t+r}^2}{r}L(rZ^Iv_\alpha)\overline{B+C_2}\Bigr)}w\,dx\,dt\\
&\quad\lesssim\ep\int_{t_1}^{t_2}\!\int_{\Sigma_t}\!\frac{\jb{t+r}}{r}\abs{L(rZ^Iv_\alpha)}\sum_{\abs{K}\leq\abs{I}}\frac{\abs{Z^K u}}{\jb{t-r}^{1+\gamma}}w\,dx\,dt\\
&\quad\lesssim\ep\int_{t_1}^{t_2}\!\int_{\Sigma_t}\!\sum_{\abs{K}\leq\abs{I}}\Bigl(\frac{\jb{t+r}^2}{r^2}\abs{L(rZ^Iv_\alpha)}^2+\abs{Z^K u}^2\Bigr)\abs{w'}\,dx\,dt\\
&\quad\lesssim \ep\int_{t_1}^{t_2}\!\int_{\Sigma_t}\!\sum_{\abs{K}\leq\abs{I}}\frac{\jb{t+r}^2}{r^2}\Bigl(\abs{L(rZ^Iv_\alpha)}^2+\abs{L(rZ^K u)}^2\Bigr)\abs{w'}\,dx\,dt.
\end{align*}
where we use $\abs{w\jb{t-r}^{-1-\gamma}}\lesssim\abs{w'}$ in the second step and ~\eqref{eq:poincare_in_intro} in the last step. 
 
 It remains to consider the term $C_1$. We rewrite \[
 \partial_\alpha Z^Iu= L_\alpha(\omega)\partial_qZ^I u -\frac{\underline{L}_\alpha(\omega)}{2}LZ^I u+\omega_\alpha^Be_B(Z^I u).
 \]
 We are only concerned about the principal term $\partial_qZ^I\varphi$ because the other terms have good derivatives $L, e_B$ and we have $\abs{L\varphi}+\abs{e_B(\varphi)}\lesssim\jb{t+r}^{-1}\sum_{\abs{I}\leq 1}\abs{Z^I\varphi}$. Then we are left with estimating 
\[
\int_{t_1}^{t_2}\!\int_{\Sigma_t}\!-\frac{\jb{t+r}^2}{r}L(rZ^Iv_{\alpha})L_\alpha(\omega)\Im(\phi_{app}\overline{\partial_q Z^Iu})\,dx\,dt.
\]
In order to handle this, we integrate by parts with respect to the $q$ direction to transfer $\partial_q$ from $\partial_q Z^I u$ to $L(rZ^I v_\alpha)$. Since $\Box \varphi=2r^{-1}\partial_qL(r\varphi)+r^{-2}\Delta_{\omega}\varphi=2r^{-1}\partial_qL(r\varphi)+(r^{-2}\sum_{i, j=1}^3\Omega_{ij}^2\varphi)/2$, we find that
 \begin{align*}
&\int_{t_1}^{t_2}\!\int_{\Sigma_t}\!-\frac{\jb{t+r}^2}{r}L(rZ^Iv_{\alpha})L_\alpha(\omega)\Im(\phi_{app}\overline{\partial_q 	Z^Iu})w\,dx\,dt\\
&\qquad=\frac{1}{2}\int_{t_1}^{t_2}\!\int_{\Sigma_t}\!\jb{t+r}^2\Box (Z^Iv_{\alpha})L_\alpha(\omega)\Im(\phi_{app}\overline{Z^Iu})w\,dx\,dt\\
&\qquad\qquad-\int_{t_1}^{t_2}\!\int_{\Sigma_t}\!\jb{t+r}^2\frac{1}{4r^2}\Bigl(\sum_{i, j=1}^3\Omega_{ij}^2Z^Iv_\alpha \Bigr)L_\alpha(\omega)\Im(\phi_{app}\overline{Z^Iu})w\,dx\,dt+\cdots
\end{align*}
For the second integral in the right-hand side above, we do integration by parts with respect to the spatial variables to move one angular rotation vector field $\Omega_{ij}$ from $\Omega_{ij}^2Z^I v_\alpha$ to the other terms and then we are able to control it by $
\ep\int_{t_1}^{t_2}\!\jb{t}^{-1}(E^w[Z^Iv_\alpha](t)+E^w[Z^Iu](t))\,dt
$. It remains to estimate the first integral in the right-hand side above. We insert the equation for $\Box Z^I v_\alpha$ into the integral and again we are only concerned about the principal term  
$-L_\alpha(\omega)\Im(\phi_{app}\overline{\partial_q Z^Iu})$ in  $\Box Z^I v_\alpha$ because the other terms can be controlled by using the spacetime version of Hardy inequality ~\eqref{eq:poincare_in_intro} as before. In this way we obtain 
\begin{align*}
&-\frac{1}{2}\int_{t_1}^{t_2}\!\int_{\Sigma_t}\!\jb{t+r}^2L^2_\alpha(\omega)\Im(\phi_{app}\overline{\partial_q Z^Iu})\Im(\phi_{app}\overline{Z^Iu})w\,dx\,dt\\
&\qquad=\frac{1}{8}\int_{t_1}^{t_2}\!\int_{\Sigma_t}\!\jb{t+r}^2L^2_\alpha(\omega)\Bigl(\frac{\phi_{app}^2}{2}\partial_q(\overline{Z^I u})^2-\abs{\phi_{app}}^2\partial_q\abs{Z^I u}^2+\frac{\overline{\phi_{app}}^2}{2}\partial_q(Z^I u)^2\Bigr)w\,dx\,dt.
\end{align*}
Integrating by parts in $q$ direction again yields
\begin{equation}\label{eq:type2}
-\frac{1}{8}\int_{t_1}^{t_2}\!\int_{\Sigma_t}\!\jb{t+r}^2L^2_\alpha(\omega)\Bigl((\overline{Z^I u})^2\partial_q(\frac{\phi_{app}^2}{2})-\abs{Z^I u}^2\partial_q\abs{\phi_{app}}^2+(Z^I u)^2\partial_q(\frac{\overline{\phi_{app}}^2}{2})\Bigr)w\,dx\,dt+\cdots.
\end{equation}
Then we are in the position to use the spacetime version of Hardy inequality ~\eqref{eq:poincare_in_intro} again to estimate the leading term in ~\eqref{eq:type2}.
Our conclusion then follows by using a standard bootstrap argument.

\subsubsection{MKG equations}

Finally in Section ~\ref{sec:5} we prove that the global solution to the backward problem for the reduced MKG equations ~\eqref{eq:rmkg} is indeed a solution to MKG equations ~\eqref{eq:mkg_in_A} provided that the radiation set $(\Phi, \mathcal{A}_\alpha)$ satisfies the \emph{asymptotic Lorenz gauge condition}. Thanks to the key observation that $\lambda=\partial^\alpha A_\alpha$ satisfies the wave equation ~\eqref{eq:eq_for_lorenz}, given the solution $(\phi_T, A_{\alpha T})$ to the reduced MKG equations ~\eqref{eq:rmkg} for $t\leq T$ we conclude for $Z^I\lambda_T$ with $\abs{I}\leq 1$ that uniformly in $(t, x)$
\[
\sum_{\abs{I}\leq 1}\abs{Z^I\lambda_T(t, x)}\to 0\quad \text{as}\quad T\to \infty,
\]
from which our conclusion follows.

\subsection*{Acknowledgements}

The author would like to thank her advisor, Hans Lindblad, for suggesting this problem, many helpful discussions and his constructive criticism of the manuscript.

\section{Preliminaries}\label{sec:2}

In this section we first introduce the basic notations and identities in Subsection ~\ref{subsec:notations}. Then we derive a backward version of weighted conformal Morawetz energy estimate in Subsection ~\ref{subsec:energyestimate}. Finally we establish a spacetime version of Hardy inequality and a weighted Klainerman-Sobolev inequality in Subsection ~\ref{subsec:poincareineq}.

\subsection{Notation and conventions}\label{subsec:notations}

For nonnegative $A$, $B$, we write $A\lesssim B$ or $A=\mathcal{O}(B)$ if $A\leq CB$ for some universal positive constant $C$. We write $A\sim B$ if $A\lesssim B$ and $B\lesssim A$. We use the notation $\lesssim_\nu$ or $C_\nu$ to indicate that the constant depends on a parameter $\nu$. The exact values of all constants in this paper may vary from line to line. Moreover, we use the Japanese brackets notation $\jb{a}=(1+a^2)^{1/2}$ and we denote by $\slashed{\partial}_i$ the spatial angular components of $\partial_i$.

We can write the usual translation invariant frame $\{\partial_\alpha\}$ in terms of the null frame defined in ~\eqref{eq:nullframe} as follows
\begin{equation}\label{eq:Euframe}
\partial_\alpha=-\frac{\underline{L}_\alpha}{2}L-\frac{L_\alpha}{2}\underline{L}+\omega_\alpha^Be_B.
\end{equation}
where $\omega_\alpha^B=e_B(x^\alpha)=\omega^\alpha_B$, which follows from the formula
\[
\omega_\alpha^B=\angles{\partial_\alpha}{e_B}=dx^\alpha(e_B)=e_B(x^\alpha).
\]
Also we note that 
\begin{equation}
\omega^i\omega_i^B=\omega_i\omega^i_B=0, \quad \omega_B^i\omega_i^D=\delta_B^D, \quad \omega_i^B\omega_B^j=\delta_i^j-\omega_i\omega^j.
\end{equation}
Similarly, we can write the gauge potential $A_\alpha$ in terms of its null decomposition
\begin{equation}\label{eq:A_in_nullframe}
A_\alpha=-\frac{\underline{L}_\alpha}{2}A_L-\frac{L_\alpha}{2}A_{\underline{L}}+\omega_\alpha^BA_{e_B}.
\end{equation}

\medskip

We use the following vector fields in this work
\begin{equation}\label{eq:vectorfields}
\begin{split}
\partial_\alpha, \quad \Omega_{ij}=x_i\partial_j&-x_j\partial_i, \quad \Omega_{0i}=t\partial_i+x_i\partial_t, \quad S=t\partial_t+r\partial_r, \\
 \overline{K}_0&=\frac{1+(t+r)^2}{2}L+\frac{1+(t-r)^2}{2}\underline{L}.
\end{split}
\end{equation}
$\partial_\alpha$, $\Omega_{ij}$, $\Omega_{0i}$ and $S$ will be used as commutators, while $\overline{K}_0$ will be served as our multiplier. We let $Z$ be any of the commutator vector fields here and in the sequel. For any multiindex $I$ with length $\abs{I}$, let $Z^I$ denote the product of $\abs{I}$ such commutator vector fields. Moreover, by a sum over $I=J+K$ we mean a sum over all possible order preserving partitions of the ordered multiindex into two ordered multiindex $J$ and $K$, that is, if $I=(\iota_1,\ldots, \iota_k)$, then $J=(\iota_{i_1},\ldots, \iota_{i_n})$ and $K=(\iota_{i_{n+1}}, \ldots, \iota_{i_k})$, where $i_1,\ldots, i_k$ is any reordering of the integers $1,\ldots, k$ such that $i_1<\ldots<i_n$ and $i_{n+1}<\ldots<i_k$. Then we have Leibniz's rule
\begin{equation}\label{eq:leibniz}
Z^I(fg)=\sum_{J+K=I}Z^JfZ^Kg.
\end{equation}
We define the tangential derivatives $\overline{\partial}$ to be either the $\partial_p$ or the angular derivatives $\slashed{\partial}_i$, and we have the following expressions for the tangential derivatives in terms of the commutator vector fields
\begin{equation}\label{eq:tanderi}
\partial_p=\frac{1}{2}L=\frac{S+\omega^i\Omega_{0i}}{2(t+r)}, \quad \slashed{\partial}_i=\partial_i-\omega_i\partial_r=\frac{\omega^j\Omega_{ji}}{r}=\frac{-\omega_i\omega^j\Omega_{0j}+\Omega_{0i}}{t}.
\end{equation}
We have the following identities
\begin{equation}\label{eq:vectorfield_identity}
\begin{split}
\partial_\alpha&=-\frac{\underline{L}_\alpha}{2}L-\frac{L_\alpha}{2}\underline{L}+\omega_\alpha^Be_B,
\quad \Omega_{ij}=(x_i\omega_j^B-x_j\omega_i^B)e_B,\\
\Omega_{0i}&=\frac{\omega_i}{2}((t+r)L-(t-r)\underline{L})+t\omega_i^	Be_B, \quad S=\frac{1}{2}((t+r)L+(t-r)\underline{L}).
\end{split}
\end{equation}
In this paper, we repeatedly use the following commutation identities
\begin{equation}
\begin{split}\label{eq:com_iden}
[\Box, Z]&=0, \quad \text{if} \quad Z=\partial_\alpha, \Omega_{ij},\Omega_{0i},\quad [\Box, S]=2\Box,\\
[\partial_\alpha, \partial_\beta]&=0, \quad[S, \partial_\alpha]=-\partial_\alpha,\quad[L, \partial_0]=[\underline{L}, \partial_0]=[e_B, \partial_0]=0,\\
[\Omega_{0i}, \partial_\alpha]&=-\delta_{0\alpha}\partial_i-\delta_{i\alpha}\partial_0,\quad[\Omega_{ij}, \partial_\alpha]=\delta_{j\alpha}\partial_i-\delta_{i\alpha}\partial_j,\\
[L, \partial_i]&=\omega_i^B[L, e_B]=-\frac{\omega_i^B}{r}e_B,\\
[\underline{L}, \partial_i]&=\omega_i^B[\underline{L}, e_B]=\frac{\omega_i^B}{r}e_B,\\
[e_B, \partial_i]&=\frac{\omega_{Bi}}{2r}(L-\underline{L})-\omega_i^E\overline{\Gamma}_{EB}^De_D,\\
[L, \Omega_{ij}]&=-[\underline{L}, \Omega_{ij}]=0,\\
[e_B, \Omega_{ij}]&=e_B(\Omega_{ij}^D)e_D+\Omega_{ij}^E[e_B, e_E]^De_D,\\
[L, \Omega_{0i}]&=\omega_iL+\frac{r-t}{r}\omega_i^De_D,\\
[\underline{L}, \Omega_{0i}]&=-\omega_i\underline{L}+\frac{t+r}{r}\omega_i^De_D,\\
[e_B, \Omega_{0i}]&=\frac{\omega_i^B}{2r}((t+r)L+(r-t)\underline{L})-t\omega_i^E\overline{\Gamma}_{EB}^De_D,\\
[L, S]&=L,\quad [\underline{L}, S]=\underline{L},\quad [e_A, S]=0.
\end{split}
\end{equation}
Here, $\overline{\Gamma}_{BE}^D$ denotes the Christoffel symbol of the sphere.

Finally, we record a pointwise estimate on partial derivatives
\begin{lem}\label{lem:pointwise_estimate_of_der}
	For any function $f$, we have the estimate
	\begin{equation}
	(1+t+r)\abs{\overline{\partial}f}+(1+\abs{t-r})\abs{\partial f}\leq C\sum_{\abs{I}=1}\abs{Z^I f}.
	\end{equation}
	\end{lem}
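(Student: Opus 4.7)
The plan is to exploit two algebraic identities obtained by combining the scaling field $S=t\partial_t+r\partial_r$ with $\omega^i\Omega_{0i}=\omega^i(t\partial_i+x_i\partial_t)=t\partial_r+r\partial_t$: namely
\[
(t+r)L=S+\omega^i\Omega_{0i}, \qquad (t-r)\underline{L}=S-\omega^i\Omega_{0i}.
\]
Combined with the formulas $r\,\slashed{\partial}_i=\omega^j\Omega_{ji}$ and $t\,\slashed{\partial}_i=-\omega_i\omega^j\Omega_{0j}+\Omega_{0i}$ recorded in \eqref{eq:tanderi}, these invert to express each of $L$, $\underline{L}$ and $\slashed{\partial}_i$ as a weighted linear combination of the commutator vector fields $S,\Omega_{0i},\Omega_{ij}$.

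For the first term $(1+t+r)\abs{\overline{\partial} f}$, I would split into the regions $t+r\leq 1$ and $t+r\geq 1$. In the small region the weight is $O(1)$ and the trivial estimate $\abs{\overline{\partial} f}\leq\abs{\partial f}\leq\sum_{\abs{I}=1}\abs{Z^I f}$ suffices. In the large region, the first identity above yields
\[
(t+r)\abs{Lf}\leq \abs{Sf}+\sum_i\abs{\Omega_{0i}f},
\]
while the two alternative formulas for $\slashed{\partial}_i$ give $(t+r)\abs{\slashed{\partial}_i f}\lesssim \sum_{\abs{I}=1}\abs{Z^I f}$: in the subregion $r\geq t$ use $r\,\slashed{\partial}_i=\omega^j\Omega_{ji}$ together with $t+r\lesssim r$, and in the subregion $r\leq t$ use $t\,\slashed{\partial}_i=-\omega_i\omega^j\Omega_{0j}+\Omega_{0i}$ with $t+r\lesssim t$. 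Since $\overline{\partial} f$ consists of $Lf/2$ and $\slashed{\partial}_i f$, this concludes the first half.

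For the second term $(1+\abs{t-r})\abs{\partial f}$, the trivial bound $\abs{\partial f}\leq\sum_{\abs{I}=1}\abs{Z^I f}$ handles the constant $1$, so I only need $\abs{t-r}\abs{\partial f}\lesssim\sum_{\abs{I}=1}\abs{Z^I f}$. Decomposing $\partial_\alpha$ via the null frame as in \eqref{eq:Euframe},
\[
\partial_\alpha f=-\tfrac{\underline{L}_\alpha}{2}Lf-\tfrac{L_\alpha}{2}\underline{L} f+\omega_\alpha^B e_B f,
\]
I bound the $Lf$ and $e_B f$ contributions using $\abs{t-r}\leq t+r$ and the tangential estimate just proved. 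For the $\underline{L} f$ contribution I split on $\abs{t-r}\leq 1$ (trivial) versus $\abs{t-r}\geq 1$ (the second identity above gives $\abs{t-r}\abs{\underline{L} f}\leq\abs{Sf}+\sum_i\abs{\Omega_{0i}f}$).

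There is no substantive obstacle: the only subtlety is the coordinate singularity of the inversion identities at $t+r=0$ and at $t=r$, but the weights $(1+t+r)$ and $(1+\abs{t-r})$ are the natural ones that allow the trivial bound $\abs{\partial f}\leq\sum_{\abs{I}=1}\abs{Z^I f}$ to take over on the offending regions. The argument is essentially the standard Klainerman-type scaling lemma.
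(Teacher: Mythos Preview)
The paper does not actually prove this lemma; it merely \emph{records} it as a standard pointwise estimate (note the phrasing ``we record a pointwise estimate'' immediately preceding the statement, with no proof environment following). Your argument is the standard one and is correct: the identities $(t+r)L=S+\omega^i\Omega_{0i}$, $(t-r)\underline{L}=S-\omega^i\Omega_{0i}$, together with the two expressions for $\slashed{\partial}_i$ in \eqref{eq:tanderi}, are exactly what one uses, and your case splits at $t+r=1$, $\abs{t-r}=1$, and $r=t$ handle the degeneracies of these inversion formulas cleanly.
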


\subsection{Energy estimates}\label{subsec:energyestimate}

In this subsection we derive a weighted conformal Morawetz energy estimate for the backward problem for the equation
\begin{equation}\label{eq:waveeqn}
\Box \varphi=F.
\end{equation}
where $\varphi$ and $F$ are complex-valued functions.

We define the weighted conformal energy at time slice $\Sigma_t=\{(x^0, x^1, x^2,x^3)\mid x^0=t\}$
\begin{equation}\label{eq:weightedenergy_at_t}
E^{w}[\varphi](t):=\frac{1}{2}\int_{\Sigma_t}\!\Bigl(\jb{t+r}^2\abs{\frac{1}{r}L(r\varphi)}^2+\left(\jb{t+r}^2+\jb{t-r}^2\right)\abs{\slashed{\partial}\varphi}^2+\jb{t-r}^2\abs{\frac{1}{r}\underline{L}(r\varphi)}^2\Bigr)w\,dx.
\end{equation}
Now we state the weighted conformal Morawetz energy estimate we will use in this work
\begin{prop}\label{prop:weightedenergyestimate}
Let $\varphi$ be a solution to the equation ~\eqref{eq:waveeqn} with $\lim_{\abs{x}\to\infty}\abs{x}^{3/2}\varphi(t, x)w(q)^{1/2}=0$, and let $E^{w}[\varphi](t)$ be defined by ~\eqref{eq:weightedenergy_at_t}. Then for $t_1\leq t_2$,
\begin{equation}\label{eq:weightedenergyestimate}
\begin{split}
E^{w}[\varphi](t_1)&=E^{w}[\varphi](t_2)+\int_{t_1}^{t_2}\!\int_{\Sigma_t}\!\Re\Bigl(\frac{2}{r}\overline{K}_0(r\varphi)\overline{F}\Bigr)w\,dxdt\\
&\qquad+\int_{t_1}^{t_2}\!\int_{\Sigma_t}\!\Bigl(\jb{t+r}^2\abs{\frac{1}{r}L(r\varphi)}^2+\jb{t-r}^2\abs{\slashed{\partial}\varphi}^2\Bigr)w'\,dxdt
\end{split}
\end{equation}
where $w=w(q)$ is an arbitrary function of $q=r-t$.
\end{prop}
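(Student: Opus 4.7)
My plan is to derive \eqref{eq:weightedenergyestimate} from a pointwise multiplier identity: multiply $\Box\varphi = F$ by $\tfrac{2}{r}\overline{K}_0(r\varphi)\,w$, take real parts, and integrate over the slab $[t_1,t_2]\times\mathbb{R}^3$. The decay hypothesis $\lim_{|x|\to\infty}|x|^{3/2}\varphi\,w^{1/2} = 0$ and smoothness at the origin kill the spatial boundary contributions, leaving only the fluxes through $\Sigma_{t_1}, \Sigma_{t_2}$ and the spacetime bulk integrands. The starting algebraic ingredient is the spherical reduction
\[
r\Box\varphi = -L\underline{L}(r\varphi) + \tfrac{1}{r}\Delta_\omega\varphi,
\]
which splits the calculation cleanly into a ``radial'' piece coming from $L\underline{L}(r\varphi)$ and an ``angular'' piece coming from $\Delta_\omega\varphi$.

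For the radial piece I would set $\psi = r\varphi$ and use the chain-rule identities $\Re(L\psi\,\overline{L\underline{L}\psi}) = \tfrac{1}{2}\underline{L}|L\psi|^2$ and $\Re(\underline{L}\psi\,\overline{L\underline{L}\psi}) = \tfrac{1}{2}L|\underline{L}\psi|^2$ to rewrite the contribution as $-\tfrac{\jb{t+r}^2 w}{2r^2}\underline{L}|L\psi|^2 - \tfrac{\jb{t-r}^2 w}{2r^2}L|\underline{L}\psi|^2$, and then transfer the outer $L,\underline{L}$ off via the Leibniz rule. The bookkeeping is controlled by the simple facts $Lw = 0$, $\underline{L}w = -2w'$ (since $q = r-t$), $\underline{L}\jb{t+r}^2 = L\jb{t-r}^2 = 0$, $L\jb{t+r}^2 = 4(t+r)$, $\underline{L}\jb{t-r}^2 = 4(t-r)$, together with $L(r^{-2}) = -2r^{-3}$, $\underline{L}(r^{-2}) = 2r^{-3}$. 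The spacetime integrand $\jb{t+r}^2|\tfrac{1}{r}L(r\varphi)|^2 w'$ in \eqref{eq:weightedenergyestimate} is produced precisely when $\underline{L}$ hits $w$.

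For the angular piece I would integrate by parts on each sphere $\{t\}\times\{r\}\times S^2$, using that $\nabla_\omega$ commutes with $L,\underline{L}$ while $\overline{K}_0 r = 2tr$; after one more Leibniz transfer this yields both the angular fluxes $\tfrac{1}{2}(\jb{t+r}^2+\jb{t-r}^2)|\slashed{\partial}\varphi|^2 w$ on $\Sigma_{t_i}$ and the remaining spacetime term $\jb{t-r}^2|\slashed{\partial}\varphi|^2 w'$ (again from $\underline{L}w = -2w'$). Assembling radial and angular contributions, the pointwise identity takes the schematic form $\Re(\tfrac{2}{r}\overline{K}_0(r\varphi)\overline{F})w = \partial_\mu J^\mu - (\jb{t+r}^2|\tfrac{1}{r}L(r\varphi)|^2 + \jb{t-r}^2|\slashed{\partial}\varphi|^2)w'$ for some current $J^\mu$ whose flux through $\Sigma_t$ equals $-E^w[\varphi](t)$, and the divergence theorem then gives \eqref{eq:weightedenergyestimate}.

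The main obstacle is the cancellation of the many ``spurious'' bulk terms produced by the Leibniz rule --- from $L,\underline{L}$ hitting $r^{-2}$, $\jb{t\pm r}^2$, and the coefficient $2tr$ that appears after angular integration by parts --- which only the precise conformal choice of $\overline{K}_0$ arranges to vanish in pairs. Structurally this reflects the fact that $\overline{K}_0 = K_0 + \partial_t$, where $K_0 = (t^2+r^2)\partial_t + 2tr\partial_r$ is the classical conformal Killing field with deformation tensor proportional to the Minkowski metric and $\partial_t$ is Killing, so that for $w\equiv 1$ one recovers the classical conformal Morawetz identity of \cite{M62}; the weight $w = w(q)$ interacts with the geometry only through $\underline{L}w = -2w'$, which is exactly what produces the gain term in \eqref{eq:weightedenergyestimate}. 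I would handle the bookkeeping by keeping the radial and angular parts separate until the very end and then verifying the cancellations term by term.
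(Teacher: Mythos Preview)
Your approach is correct and would yield \eqref{eq:weightedenergyestimate}, but it is organized differently from the paper's proof. The paper does not work directly with the spherical reduction $r\Box\varphi=-L\underline{L}(r\varphi)+\tfrac{1}{r}\Delta_\omega\varphi$; instead it uses the energy--momentum tensor formalism. It forms the modified current $P_\alpha=\bigl(Q_{\alpha\beta}\overline{K}_0^\beta+2t\Re(\varphi\overline{\partial_\alpha\varphi})-|\varphi|^2\bigr)w(q)$, and the conformal Killing property of $\overline{K}_0$ (deformation tensor $4tm^{\alpha\beta}$) makes the divergence identity $\partial^\alpha P_\alpha=\Re(\tfrac{1}{r}\overline{K}_0(r\varphi)\overline{F})w+(Q(\overline{K}_0,L)+2t\Re(\varphi\overline{L\varphi})-|\varphi|^2)w'$ essentially algebraic, so most of the ``spurious cancellations'' you worry about are absorbed automatically. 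The paper still needs two further integration-by-parts steps---one in $r$ to identify the flux $\int P_0$ with $E^w[\varphi]$ plus a $w'$ correction, and one in $p$ to rewrite the bulk $w'$ term---after which the $w'$ flux corrections cancel exactly between the two. Your route trades the abstract tensor machinery for an explicit radial/angular split and Leibniz bookkeeping; this is more elementary and makes the origin of the two gain terms $\jb{t+r}^2|\tfrac{1}{r}L(r\varphi)|^2w'$ and $\jb{t-r}^2|\slashed{\partial}\varphi|^2w'$ transparent (each from a single $\underline{L}w=-2w'$), at the cost of having to check by hand the $r^{-3}$ and $4tr$ cancellations that the paper's method handles structurally. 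Either way the computation closes.
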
 
 
 \begin{proof}
 Let $Q_{\alpha\beta}$ be the energy momentum tensor for the linear wave  equation ~\eqref{eq:waveeqn}
 \[
 Q_{\alpha\beta}=\Re(\partial_\alpha\varphi\overline{\partial_\beta\varphi})-\frac{1}{2}m_{\alpha\beta}\partial^\gamma\varphi\overline{\partial_\gamma\varphi}.
 \]
 We use the conformal killing vector field $\overline{K}_0$ as defined in ~\eqref{eq:vectorfields} whose deformation tensor is given by
 \begin{equation}\label{eq:defortensor}
 {}^{\overline{K}_0}\pi^{\alpha\beta}=\partial^\alpha\overline{K}_0^\beta+\partial^\beta\overline{K}_0^\alpha=4tm^{\alpha\beta}.
 \end{equation}
 We now form a momentum density associated with the vector field $\overline{K}_0$ and the extra weight $w$
 \begin{equation}\label{eq:momentumdensity}
 P_\alpha=\bigl(Q_{\alpha\beta}\overline{K}_0^\beta+2t\Re(\varphi\overline{\partial_\alpha \varphi})-\frac{1}{4}\partial_\alpha(4t)\abs{\varphi}^2\bigr)w(q).
 \end{equation}
 Now, calculating the spacetime divergence of the quantity ~\eqref{eq:momentumdensity} we have that 
 \begin{equation}\label{eq:divergence}
 \begin{split}
 \partial^\alpha P_\alpha &=\bigl(\partial^\alpha Q_{\alpha\beta}\overline{K}_0^\beta+\frac{1}{2}Q_{\alpha\beta} {}^{\overline{K}_0}\pi^{\alpha\beta}+2t\partial^\alpha \varphi \overline{\partial_\alpha \varphi}+2t\Re(\varphi \overline{F})\bigr)w(q)\\
 &\qquad+\bigl(Q(\overline{K}_0, L)+2t\Re(\varphi\overline{L \varphi})-\abs{\varphi}^2\bigr)w'(q)\\
 &=\bigl(\Re(\overline{K}_0(\varphi)\overline{F}) +2t\Re(\varphi \overline{F})\bigr)w(q)+\bigl(Q(\overline{K}_0, L)+2t\Re(\varphi\overline{L \varphi})-\abs{\varphi}^2\bigr)w'(q)\\
 &=\Re(\frac{1}{r}\overline{K}_0(r\varphi)\overline{F})w(q)+\bigl(Q(\overline{K}_0, L)+2t\Re(\varphi\overline{L \varphi})-\abs{\varphi}^2\bigr)w'(q).
 \end{split}
 \end{equation}
 Integrating both sides of ~\eqref{eq:divergence} over the time slabs $\{t_1\leq t\leq t_2\}$ and applying the Stokes theorem we arrive at
 \begin{equation}\label{eq:energyidentity}
 \int_{\Sigma_{t_1}}\!P_0\,dx= \int_{\Sigma_{t_2}}\!P_0\,dx+\int_{t_1}^{t_2}\!\int_{\Sigma_t}\!\Bigl(\Re(\frac{1}{r}\overline{K}_0(r\varphi)\overline{F})w(q)+\bigl(Q(\overline{K}_0, L)+2t\Re(\varphi\overline{L \varphi})-\abs{\varphi}^2\bigr)w'(q)\Bigr)\,dx\,dt.
 \end{equation}
 In order to proceed, we calculate each term in ~\eqref{eq:energyidentity} individually.
 
 \medskip
 
For the the flux term $P_0=\bigl(Q(\overline{K}_0, \partial_t)+2t\Re(\varphi\overline{\partial_t\varphi})-\abs{\varphi}^2\bigr)w(q)$, we have 
\[
Q(\overline{K}_0, \partial_t)=\frac{1}{4}\jb{t+r}^2\abs{L\varphi}^2+\frac{1}{4}(\jb{t+r}^2+\jb{t-r}^2)\abs{\slashed{\partial}\varphi}^2+\frac{1}{4}\jb{t-r}^2\abs{\underline{L}\varphi}^2.
\]
We also note that 
\begin{align*}
2t\Re(\varphi\partial_t\varphi)&=\frac{\jb{t+r}^2-\jb{t-r}^2}{2r}\Re(\varphi\overline{\partial_t\varphi})\\
&=\frac{\jb{t+r}^2}{2r}\Re(\varphi\overline{L\varphi})-\frac{\jb{t-r}^2}{2r}\Re(\varphi\overline{\underline{L}\varphi})-\frac{\jb{t+r}^2+\jb{t-r}^2}{2r}\Re(\varphi\overline{\partial_r\varphi}).
\end{align*}
Integrating by parts in $r$ we find that
\begin{align*}
&\int_{\Sigma_t}\!-\frac{\jb{t+r}^2+\jb{t-r}^2}{2r}\Re(\varphi\overline{\partial_r\varphi})w(q)\,dx\\
&\qquad=-\frac{1}{4}\int_{\Sigma_t}\!(\jb{t+r}^2+\jb{t-r}^2)r\partial_r(\varphi\overline{\varphi})w(q)\,dS(\omega)\,dr\\
&\qquad=\frac{1}{4}\int_{\Sigma_t}\!\frac{\jb{t+r}^2+\jb{t-r}^2}{r^2}\abs{\varphi}^2w(q)\,dx+\int_{\Sigma_t}\abs{\varphi}^2w(q)\,dx+\frac{1}{4}\int_{\Sigma_t}\!\frac{\jb{t+r}^2+\jb{t-r}^2}{r}\abs{\varphi}^2w'(q)\,dx.
\end{align*}
Therefore we obtain
\begin{equation}\label{eq:flux}
\begin{split}
\int_{\Sigma_{t_1}}\!P_0\,dx&=\int_{\Sigma_{t_1}}\!\Bigl(\frac{1}{4}\jb{t+r}^2\abs{L\varphi}^2+\frac{1}{4}(\jb{t+r}^2+\jb{t-r}^2)\abs{\slashed{\partial}\varphi}^2+\frac{1}{4}\jb{t-r}^2\abs{\underline{L}\varphi}^2\Bigr)w(q)\,dx\\
&\qquad+\int_{\Sigma_{t_1}}\!\Bigl(\frac{\jb{t+r}^2}{2r}\Re(\varphi\overline{L\varphi})-\frac{\jb{t-r}^2}{2r}\Re(\varphi\overline{\underline{L}\varphi})+\frac{\jb{t+r}^2+\jb{t-r}^2}{4r^2}\abs{\varphi}^2\Bigr)w(q)\,dx\\
&\qquad+\frac{1}{4}\int_{\Sigma_t}\!\frac{\jb{t+r}^2+\jb{t-r}^2}{r}\abs{\varphi}^2w'(q)\,dx\\
&=\frac{1}{4}\int_{\Sigma_{t_1}}\!\Bigl(\jb{t+r}^2\abs{\frac{1}{r}L(r\varphi)}^2+\left(\jb{t+r}^2+\jb{t-r}^2\right)\abs{\slashed{\partial}\varphi}^2+\jb{t-r}^2\abs{\frac{1}{r}\underline{L}(r\varphi)}^2\Bigr)w(q)\,dx\\
&\qquad+\frac{1}{4}\int_{\Sigma_{t_1}}\!\frac{\jb{t+r}^2+\jb{t-r}^2}{r}\abs{\varphi}^2w'(q)\,dx.
\end{split}
\end{equation}
It remains to calculate the last term on the right-hand side of ~\eqref{eq:energyidentity}. First we have
\[Q(\overline{K}_0, L)=\frac{1}{2}\jb{t+r}^2\abs{L\varphi}^2+\frac{1}{2}\jb{t-r}^2\abs{\slashed{\partial}\varphi}^2.\]
Then we write 
\[
2t\Re(\varphi\overline{L\varphi})=\frac{\jb{t+r}^2-\jb{t-r}^2}{2r}\Re(\varphi\overline{L\varphi})=\frac{\jb{t+r}^2}{r}\Re(\varphi\overline{L\varphi})-\frac{\jb{t+r}^2+\jb{t-r}^2}{2r}\Re(\varphi\overline{L\varphi}).
\]
Integrating by parts in $p=t+r$ direction yields
\begin{align*}
&\int_{t_1}^{t_2}\!\int_{\Sigma_t}\!-\frac{\jb{t+r}^2+\jb{t-r}^2}{2r}\Re(\varphi\overline{L\varphi})w'(q)\,dx\,dt\\
&\qquad=-\frac{1}{4}\int_{\BS^2}\!\int_{-t_2}^\infty\!\int_{p_1}^{p_2}\,(\jb{p}^2+\jb{q}^2)r\partial_p(\varphi\overline{\varphi})w'(q)dp\,dq\,dS(\omega)\\
&\qquad=-\frac{1}{4}\int_{\Sigma_{t_2}}\!\frac{\jb{t+r}^2+\jb{t-r}^2}{r}\abs{\varphi}^2w'(q)\,dx+\frac{1}{4}\int_{\Sigma_{t_1}}\!\frac{\jb{t+r}^2+\jb{t-r}^2}{r}\abs{\varphi}^2w'(q)\,dx\\
&\qquad\qquad+\frac{1}{4}\int_{t_1}^{t_2}\!\int_{\Sigma_t}\!\frac{\jb{t+r}^2+\jb{t-r}^2}{r^2}\abs{\varphi}^2w'(q)\,dx\,dt+\int_{t_1}^{t_2}\!\int_{\Sigma_t}\!\frac{(t+r)}{r}\abs{\varphi}^2w'(q)\,dx\,dt.
\end{align*}
Thus we find that 
\begin{equation}\label{eq:spacetimeterm}
\begin{split}
&\int_{t_1}^{t_2}\!\int_{\Sigma_t}\!\bigl(Q(\overline{K}_0, L)+2t\Re(\varphi\overline{L \varphi})-\abs{\varphi}^2\bigr)w'(q)\,dx\,dt\\
&\qquad=\int_{t_1}^{t_2}\!\int_{\Sigma_t}\!\Bigl(\frac{1}{2}\jb{t+r}^2\abs{L\varphi}^2+\frac{1}{2}\jb{t-r}^2\abs{\slashed{\partial}\varphi}^2+\frac{\jb{t+r}^2}{r}\Re(\varphi\overline{L\varphi})\Bigr)w'(q)\,dx\,dt\\
&\qquad\qquad+\frac{1}{4}\int_{t_1}^{t_2}\!\int_{\Sigma_t}\!\Bigl(\frac{\jb{t+r}^2+\jb{t-r}^2}{r^2}+\frac{4(t+r)}{r}-4\Bigr)\abs{\varphi}^2w'(q)\,dx\,dt\\
&\qquad\qquad-\frac{1}{4}\int_{\Sigma_{t_2}}\!\frac{\jb{t+r}^2+\jb{t-r}^2}{r}\abs{\varphi}^2w'(q)\,dx+\frac{1}{4}\int_{\Sigma_{t_1}}\!\frac{\jb{t+r}^2+\jb{t-r}^2}{r}\abs{\varphi}^2w'(q)\,dx\\
&\qquad=\frac{1}{2}\int_{t_1}^{t_2}\!\int_{\Sigma_t}\!\Bigl(\jb{t+r}^2\abs{\frac{1}{r}L(r\varphi)}^2+\jb{t-r}^2\abs{\slashed{\partial}\varphi}^2\Bigr)w'(q)\,dx\,dt\\
&\qquad\qquad-\frac{1}{4}\int_{\Sigma_{t_2}}\!\frac{\jb{t+r}^2+\jb{t-r}^2}{r}\abs{\varphi}^2w'(q)\,dx+\frac{1}{4}\int_{\Sigma_{t_1}}\!\frac{\jb{t+r}^2+\jb{t-r}^2}{r}\abs{\varphi}^2w'(q)\,dx.
\end{split}
\end{equation}
Putting ~\eqref{eq:energyidentity}--\eqref{eq:spacetimeterm} together we see that the flux terms containing $w'(q)$ exactly cancel, and this finishes the proof of Proposition ~\ref{prop:weightedenergyestimate}.
 \end{proof}

\medskip

The following corollary corresponds to a specific choice of the weight function $w(q)$. We distinguish between the interior and exterior of the light cone and introduce the notation
\[
\Sigma_t^i=\{(t, x)\mid \abs{x}\leq t\}, \quad \Sigma_t^e=\{(t, x)\mid \abs{x}>t)\}.
\]
We define a weighted conformal Morawetz energy as follows
\begin{equation}\label{eq:specificenergy_at_t}
\begin{split}
E_\nu[\varphi](t)&:=\frac{1}{2}\int_{\Sigma_t^i}\!\Bigl(\jb{t+r}^2\abs{\frac{1}{r}L(r\varphi)}^2+\left(\jb{t+r}^2+\jb{t-r}^2\right)\abs{\slashed{\partial}\varphi}^2+\jb{t-r}^2\abs{\frac{1}{r}\underline{L}(r\varphi)}^2\Bigr)(1+\abs{q})^\nu\,dx\\
&\qquad+\frac{1}{2}\int_{\Sigma_t^e}\!\Bigl(\jb{t+r}^2\abs{\frac{1}{r}L(r\varphi)}^2+\left(\jb{t+r}^2+\jb{t-r}^2\right)\abs{\slashed{\partial}\varphi}^2+\jb{t-r}^2\abs{\frac{1}{r}\underline{L}(r\varphi)}^2\Bigr)\,dx.
\end{split}
\end{equation}
\begin{cor}\label{cor:energyestimate_for_u}
Let $\varphi$ be a solution to the equation ~\eqref{eq:waveeqn} with $\lim_{\abs{x}\to\infty}\abs{x}^{3/2}\varphi(t, x)=0$, and let $E_\nu[\varphi](t)$ be defined by ~\eqref{eq:specificenergy_at_t} with $\nu>0$. Then for all $\nu>0$, $\delta>0$ and $t_1\leq t_2$,
\begin{equation}
\begin{split}
&E_\nu[\varphi](t_1)+\nu\int_{t_1}^{t_2}\!\int_{\Sigma_t^i}\!\frac{\jb{t+r}^2\abs{\frac{1}{r}L(r\varphi)}^2}{\jb{q}^{1-\nu}}\,dx\,dt+\delta\int_{t_1}^{t_2}\!\int_{\Sigma_t^e}\!\frac{\jb{t+r}^2\abs{\frac{1}{r}L(r\varphi)}^2}{\jb{q}^{1+\delta}}\,dx\,dt\\
&\qquad\lesssim E_\nu[\varphi](t_2)+\biggl\lvert\int_{t_1}^{t_2}\!\int_{\Sigma_t^i}\!\Re\Bigl(\frac{1}{r}\overline{K}_0(r\varphi)\overline{F}\Bigr)(1+\abs{q})^\nu\,dx\,dt\biggr\rvert+\biggl\lvert\int_{t_1}^{t_2}\!\int_{\Sigma_t^e}\!\Re\Bigl(\frac{1}{r}\overline{K}_0(r\varphi)\overline{F}\Bigr)\,dx\,dt\biggr\rvert.
\end{split}
\end{equation}
\end{cor}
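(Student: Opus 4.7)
The plan is to apply Proposition~\ref{prop:weightedenergyestimate} with a single weight function $w(q)$ that simultaneously captures the interior growth $(1+\abs{q})^\nu$ and generates the desired exterior spacetime gain. Concretely, I would choose
\[
w(q) := \begin{cases} 1 + (1+\abs{q})^\nu, & q \leq 0, \\ 1 + (1+\abs{q})^{-\delta}, & q > 0, \end{cases}
\]
which is continuous at $q=0$ with value $2$, bounded below by $1$, and nonincreasing, with
\[
\abs{w'(q)} = \nu (1+\abs{q})^{\nu-1} \text{ for } q<0, \qquad \abs{w'(q)} = \delta (1+\abs{q})^{-1-\delta} \text{ for } q>0.
\]
The mild lack of smoothness of $w$ at $q=0$ is harmless: one either smooths $w$ on a neighborhood of $q=0$ and takes a limit, or notes that the integrations by parts producing Proposition~\ref{prop:weightedenergyestimate} still go through since the $q=0$ set has measure zero.

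Next I would verify that the decay hypothesis of Proposition~\ref{prop:weightedenergyestimate} is satisfied. Since $w(q)\leq 2$ uniformly, the condition $\lim_{\abs{x}\to\infty}\abs{x}^{3/2}\varphi(t,x)w(q)^{1/2}=0$ follows immediately from the assumed decay $\lim_{\abs{x}\to\infty}\abs{x}^{3/2}\varphi(t,x)=0$. Invoking the proposition and rearranging so that the spacetime term (which has a nonpositive factor $w'$) is moved to the left-hand side gives
\begin{equation*}
\begin{split}
E^w[\varphi](t_1) &+ \int_{t_1}^{t_2}\!\int_{\Sigma_t}\!\Bigl(\jb{t+r}^2\bigl|\tfrac{1}{r}L(r\varphi)\bigr|^2 + \jb{t-r}^2\abs{\slashed\partial\varphi}^2\Bigr)\abs{w'(q)}\,dx\,dt \\
&= E^w[\varphi](t_2) + \int_{t_1}^{t_2}\!\int_{\Sigma_t}\!\Re\Bigl(\tfrac{2}{r}\overline{K}_0(r\varphi)\overline{F}\Bigr)w(q)\,dx\,dt.
\end{split}
\end{equation*}

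The conclusion follows by reading off the weights on each region. Since $w(q)\sim (1+\abs{q})^\nu$ on $\Sigma_t^i$ and $w(q)\sim 1$ on $\Sigma_t^e$, we obtain $E^w[\varphi](t_1)\sim E_\nu[\varphi](t_1)$ and $E^w[\varphi](t_2)\lesssim E_\nu[\varphi](t_2)$. Splitting the spacetime integral on the left-hand side into $\Sigma_t^i$ and $\Sigma_t^e$ and discarding the nonnegative $\abs{\slashed\partial\varphi}^2$ contribution produces exactly the two spacetime terms
\[
\nu\int_{t_1}^{t_2}\!\int_{\Sigma_t^i}\!\frac{\jb{t+r}^2\bigl|\tfrac{1}{r}L(r\varphi)\bigr|^2}{\jb{q}^{1-\nu}}\,dx\,dt, \qquad \delta\int_{t_1}^{t_2}\!\int_{\Sigma_t^e}\!\frac{\jb{t+r}^2\bigl|\tfrac{1}{r}L(r\varphi)\bigr|^2}{\jb{q}^{1+\delta}}\,dx\,dt,
\]
up to absolute constants. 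Finally, splitting the source-term integral on the right-hand side into interior and exterior pieces and estimating each by its absolute value gives precisely the two remaining terms in the claimed inequality. The argument is essentially a direct specialization of Proposition~\ref{prop:weightedenergyestimate}; the only real choice is the weight $w$, and the only mild technical point is its non-smoothness at $q=0$, which is routinely handled by approximation.
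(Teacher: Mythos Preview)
Your proposal is correct and takes essentially the same approach as the paper: the paper's proof simply defines the identical weight
\[
w(q)=\begin{cases}1+(1+q)^{-\delta},&q\geq0,\\1+(1+\abs{q})^\nu,&q<0,\end{cases}
\]
computes $w'$, and invokes Proposition~\ref{prop:weightedenergyestimate}. Your write-up is a faithful (and slightly more detailed) expansion of that argument.
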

\begin{proof}
For $\nu>0$ and $\delta>0$ we let
\[w=
\begin{cases}
1+(1+q)^{-\delta}, \quad&q\geq0,\\
1+(1+\abs{q})^\nu, \quad&q<0.
\end{cases}
\]
Then we have
\[w'(q)=
\begin{cases}
-\delta(1+\abs{q})^{-1-\delta}, \quad &q>0,\\
-\nu(1+\abs{q})^{\nu-1}, \quad &q<0.
\end{cases}
\]
Therefore the statement of the Corollary follows from Proposition ~\ref{prop:weightedenergyestimate}.
\end{proof}
Now let
\begin{align}
\norm{\varphi(t, \cdot)}_{k}&:=\sum_{\abs{I}\leq k}\norm{(Z^I\varphi)(t, \cdot)}_{L^2_x(w)}\label{eq:norm1}\\
\norm{\varphi(t, \cdot)}^2_{1,+}&:=\int_{\Sigma_t}\!\Bigl(\jb{t+r}^2\bigl(\abs{\frac{1}{r}L(r\varphi)}^2+\abs{\slashed{\partial}\varphi}^2\bigr)+\jb{t-r}^2\abs{\frac{1}{r}\underline{L}(r\varphi)}^2+\frac{\jb{t-r}^2+r^2}{r^2}\abs{\varphi}^2\Bigr)w(q)\,dx.\label{eq:norm2}
\end{align}
where $\norm{f}_{L^2_x(w)}:=\int_{\BR^3} \abs{f}^2w\,dx$ and $k\in\BN$. 
\begin{prop}\label{prop:control_of_norm}
Let  $\lim_{\abs{x}\to\infty}\abs{x}^{3/2}\varphi(t, x)=0$, and let $E^w[\varphi](t)^{1/2}$, $\norm{\varphi(t, \cdot)}_{1}$ and $\norm{\varphi(t, \cdot)}^2_{1,+}$ be defined by ~\eqref{eq:weightedenergy_at_t}, ~\eqref{eq:norm1} and ~\eqref{eq:norm2} respectively with $w'(q)\leq0$ if $q\leq 0$, $w'(q)=0$ if $q>0$. Then we have
\begin{align}
\norm{\varphi(t, \cdot)}_{1,+}&\lesssim E^w[\varphi](t)^{1/2},\label{eq:control_of_norm1}\\
\norm{\varphi(t, \cdot)}_{1}&\lesssim\norm{\varphi(t, \cdot)}_{1,+}\,.\label{eq:control_of_norm2}
\end{align}
\end{prop}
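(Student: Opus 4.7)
\emph{Proof plan.}

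For part (ii), my strategy is purely algebraic. I would expand each commutator $Z\in\{\partial_\alpha,\Omega_{ij},\Omega_{0i},S\}$ in the null frame via \eqref{eq:vectorfield_identity}: the coefficients of $L$, $\underline L$, and $\slashed\partial$ are, respectively, $O(1)$ for $\partial_\alpha$, $O(r)$ only on $\slashed\partial$ for $\Omega_{ij}$, $O(\jb{t+r}), O(\jb{t-r}), O(\jb{t+r})$ for $\Omega_{0i}$, and $O(\jb{t+r}), O(\jb{t-r})$ for $S$. Using the elementary identities $L\varphi=\tfrac{1}{r}L(r\varphi)-\tfrac{\varphi}{r}$ and $\underline L\varphi=\tfrac{1}{r}\underline L(r\varphi)-\tfrac{\varphi}{r}$, squaring, and integrating against $w\,dx$, every contribution lands inside $\norm{\varphi}_{1,+}^2$. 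The only two auxiliary facts needed are $\jb{t+r}^2\lesssim\jb{t-r}^2+r^2$ (to absorb the stray $\jb{t+r}^2 r^{-2}|\varphi|^2$ pieces produced by $\Omega_{0i}\varphi$ and $S\varphi$ into the $\frac{\jb{t-r}^2+r^2}{r^2}|\varphi|^2$ piece of the norm) and $\jb{t-r}^2\ge 1$ (which gives $|\varphi|^2\le\frac{\jb{t-r}^2+r^2}{r^2}|\varphi|^2$ for the contributions from $\partial_\alpha\varphi$).

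For part (i), the only term in $\norm{\varphi}_{1,+}^2$ beyond $E^{w}[\varphi](t)$ is $\int_{\Sigma_t}\frac{\jb{t-r}^2+r^2}{r^2}|\varphi|^2 w\,dx$. The plan is to establish a weighted radial Hardy inequality. Setting $g(r,\omega):=r\varphi(t,r\omega)$ so that $g(0,\omega)=0$, and using $|x|^{3/2}\varphi(t,x)\to 0$ to kill the boundary term at infinity, I would begin from the elementary identity
\[
\int_0^\infty\!\frac{|g|^2 G(r)}{r^2}\,dr=\int_0^\infty\!\frac{2\Re(g\,\overline{\partial_r g})\,G}{r}\,dr+\int_0^\infty\!\frac{|g|^2 G'(r)}{r}\,dr,
\]
valid for any smooth nonnegative weight $G$. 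Taking $G=\jb{q}^2 w$, I absorb the $g\,\overline{\partial_r g}$ term by Cauchy--Schwarz with a small parameter; the $\jb{q}^2 w'$ contribution has the favorable sign $w'\le 0$ and is dropped; and the indefinite $2qw/r$ piece is handled by splitting into exterior $q>0$, where $q/r\le 1$ reduces it to $\int_{\{q>0\}}|g|^2 w\,dr$, and interior $q<0$, where the integrand is nonpositive. Finally $2\partial_r(r\varphi)=L(r\varphi)-\underline L(r\varphi)$ together with $\jb{q}^2\le\jb{t+r}^2$ collapses the right-hand side into the $L$- and $\underline L$-weighted pieces of $E^{w}$, giving $\int \jb{t-r}^2 r^{-2}|\varphi|^2 w\,dx\lesssim E^{w}$. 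The residual exterior term $\int_{\{q>0\}}|\varphi|^2 w\,dx$ is independently bounded by an analogous Hardy on $[t,\infty)$ with weight $(r-t)^2\le\jb{t-r}^2$, using that $w$ is constant for $q>0$.

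The hardest step will be the remaining bound $\int_{\Sigma_t}|\varphi|^2 w\,dx\lesssim E^{w}$, which is delicate in the wave-zone region $r\sim t$ where the $\underline L$-weight $\jb{t-r}$ is small but $r$ is not. Here I would exploit the scaling structure: from $S(r\varphi)=r\varphi+rS\varphi$ one gets $\varphi=S(r\varphi)/r-S\varphi$, while the identity $2S(r\varphi)=(t+r)L(r\varphi)+(t-r)\underline L(r\varphi)$ yields
\[
|S(r\varphi)/r|^2 \le \tfrac{1}{2}\Bigl(\jb{t+r}^2\Bigl|\tfrac{1}{r}L(r\varphi)\Bigr|^2+\jb{t-r}^2\Bigl|\tfrac{1}{r}\underline L(r\varphi)\Bigr|^2\Bigr),
\]
so $\int|S(r\varphi)/r|^2 w\,dx\lesssim E^{w}$ directly. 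The $|S\varphi|$ contribution is expanded in the null frame and, using $\jb{t+r}^2\lesssim\jb{t-r}^2+r^2$, produces the previously-controlled $\int \jb{t-r}^2 r^{-2}|\varphi|^2 w\,dx\lesssim E^{w}$ plus a term $c\int|\varphi|^2 w\,dx$ which, with $c$ made small by a careful Cauchy--Schwarz, can be absorbed back into the left-hand side. Closing this absorption, together with the region decomposition into exterior, deep interior ($r\le\jb{t-r}$, where $|\varphi|^2\le\jb{t-r}^2 r^{-2}|\varphi|^2$ reduces to the Hardy bound already proved), and wave zone, completes the argument.
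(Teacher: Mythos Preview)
Your plan for \eqref{eq:control_of_norm2} and for the $\jb{t-r}^2 r^{-2}|\varphi|^2$ piece of \eqref{eq:control_of_norm1} is sound and essentially matches the paper. The gap is in your treatment of $\int_{\Sigma_t}|\varphi|^2 w\,dx$ via the scaling identity.

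The identity $\varphi = S(r\varphi)/r - S\varphi$ is tautological: expanding $S\varphi$ in the null frame and converting $L\varphi,\underline L\varphi$ back to $\frac{1}{r}L(r\varphi),\frac{1}{r}\underline L(r\varphi)$ via $L\varphi=\frac{1}{r}L(r\varphi)-\frac{\varphi}{r}$, $\underline L\varphi=\frac{1}{r}\underline L(r\varphi)+\frac{\varphi}{r}$ gives exactly $S\varphi=\frac{1}{r}S(r\varphi)-\varphi$, i.e.\ the same identity read backwards. Consequently the $c|\varphi|^2$ term you produce on the right is not governed by a small parameter at your disposal; the structural coefficient is at least $1$, and no Cauchy--Schwarz refinement will make it absorbable. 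The argument is circular.

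The paper's device is simpler and avoids this entirely: write $1=\partial_r(r-t)$ and integrate by parts in $r$,
\[
\int_0^\infty |r\varphi|^2 w\,dr = -\int_0^\infty 2(r-t)\Re\bigl(r\varphi\,\overline{\partial_r(r\varphi)}\bigr)w\,dr - \int_0^\infty (r-t)|r\varphi|^2 w'(q)\,dr.
\]
The hypothesis $w'\le 0$ for $q\le 0$, $w'=0$ for $q>0$ forces $qw'(q)\ge 0$, so the last term is $\le 0$ and may be dropped. Cauchy--Schwarz on the remaining term gives directly
\[
\int_0^\infty |r\varphi|^2 w\,dr \le 4\int_0^\infty (r-t)^2|\partial_r(r\varphi)|^2 w\,dr,
\]
and since $2\partial_r(r\varphi)=L(r\varphi)-\underline L(r\varphi)$ with $(r-t)^2\le\jb{t+r}^2$, the right-hand side is bounded by the $L$- and $\underline L$-pieces of $E^w[\varphi]$. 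This one-line integration by parts with multiplier $q=r-t$ is the missing ingredient; it replaces your scaling argument and works globally in $r$ without any region decomposition.
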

To prove Proposition ~\ref{prop:control_of_norm} we need the following two lemmas to estimate the zeroth order terms in ~\eqref{eq:norm2}.
\begin{lem}\label{lem:control_of_varphi1}
Let  $\lim_{\abs{x}\to\infty}\abs{x}^{3/2}\varphi(t, x)=0$, and let $w(q)$ satisfy that $w'(q)\leq 0$ if $q\leq0$ and $w'(q)=0$ if $q>0$, then one has
\begin{equation}
\int_{\Sigma_t}\!\abs{\varphi}^2wdx\lesssim\int_{\Sigma_t}\!\Bigl(\jb{t+r}^2\abs{\frac{1}{r}L(r\varphi)}^2+\jb{t-r}\abs{\frac{1}{r}\underline{L}(r\varphi)}^2\Bigr)w\,dx.
\end{equation}
\end{lem}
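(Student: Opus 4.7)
The plan is to reduce the three-dimensional weighted estimate to a one-dimensional Hardy-type inequality along each radial ray. Passing to spherical coordinates on $\Sigma_t$ and setting $f(r):=r\varphi(t,r\omega)$ for fixed $\omega\in\BS^2$, one has $f(0)=0$ and
\[
\int_{\Sigma_t}\abs{\varphi}^2 w\,dx=\int_{\BS^2}\int_0^\infty\abs{f(r)}^2 w(r-t)\,dr\,dS(\omega),
\]
with the two terms on the right-hand side of the lemma admitting the analogous reduction, the Jacobian $r^2$ cancelling the $1/r^2$ from $\abs{L(r\varphi)/r}^2$ and $\abs{\underline{L}(r\varphi)/r}^2$. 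It therefore suffices to show, for each fixed $t$ and $\omega$, the one-variable inequality
\[
\int_0^\infty\abs{f(r)}^2 w(r-t)\,dr\lesssim\int_0^\infty (r-t)^2\bigl(\abs{L(r\varphi)}^2+\abs{\underline{L}(r\varphi)}^2\bigr)w(r-t)\,dr.
\]

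The one-dimensional inequality will come from integration by parts applied to the quantity $(r-t)\abs{f}^2 w(r-t)$. Expanding the derivative,
\[
\int_0^\infty\partial_r\bigl[(r-t)\abs{f}^2 w(r-t)\bigr]dr=\int_0^\infty\abs{f}^2 w\,dr+\int_0^\infty 2(r-t)\Re(\bar f f')\,w\,dr+\int_0^\infty(r-t)\abs{f}^2 w'(r-t)\,dr,
\]
and the left-hand side is a boundary contribution that vanishes at both endpoints: at $r=0$ because $f(0)=0$, and at $r=\infty$ because the decay assumption $\abs{x}^{3/2}\varphi\to 0$ forces $\abs{f(r)}^2=o(r^{-1})$ while $w$ stays bounded for $q>0$ (where $w'\equiv 0$ by hypothesis). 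The sign hypotheses on $w'$ give $(r-t)w'(r-t)\geq 0$ pointwise, so the third integrand is non-negative and may be discarded. Applying Cauchy--Schwarz and AM--GM to the cross term to absorb $\tfrac12\int\abs{f}^2 w$ to the left yields
\[
\int_0^\infty\abs{f}^2 w\,dr\leq 4\int_0^\infty(r-t)^2\abs{f'}^2 w\,dr.
\]

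To finish, I would use $f'=\partial_r(r\varphi)=\tfrac12\bigl(L(r\varphi)-\underline{L}(r\varphi)\bigr)$ to estimate $\abs{f'}^2\leq\tfrac12\bigl(\abs{L(r\varphi)}^2+\abs{\underline{L}(r\varphi)}^2\bigr)$, and distribute the $(r-t)^2$ factor via the elementary bounds $(r-t)^2\leq\jb{t+r}^2$ on the $L$-component and $(r-t)^2\leq\jb{t-r}^2$ on the $\underline{L}$-component, before integrating over $\BS^2$. The only delicate point is the vanishing of the boundary term at $r=\infty$, which relies on the hypothesis $\abs{x}^{3/2}\varphi\to 0$ in an essential way, since the weight $(r-t)w(r-t)$ grows linearly in $r$ and needs $\abs{f}^2$ to decay faster than $r^{-1}$; everything else is a standard Hardy-type manipulation adapted to the null-frame decomposition of $\partial_r$.
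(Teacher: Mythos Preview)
Your proof is correct and follows essentially the same route as the paper: reduce to a one-dimensional radial integral, integrate by parts against the weight $(r-t)$, discard the $(r-t)w'$ term by the sign hypothesis, apply Cauchy--Schwarz, and finish via $2\partial_r=L-\underline{L}$ together with $(r-t)^2\leq\jb{t+r}^2$. The only cosmetic difference is that you absorb via AM--GM whereas the paper divides through after Cauchy--Schwarz, and you are more explicit about why the boundary term at $r=\infty$ vanishes.
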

\begin{proof}
We rewrite
\[
\int_{\Sigma_t}\!\abs{\varphi}^2w(q)dx=\int_0^\infty\!\int_{\BS^2}\!\abs{r\varphi}^2w(q)\,dr\,dS(\omega).
\]
Integrating by parts in $r$ yields
\begin{align*}
\int_0^\infty\!\abs{r\varphi}^2w(q)\,dr&=\int_0^\infty\!\partial_r(r-t)\abs{r\varphi}^2w(q)\,dr\\
&=-\int_0^\infty\!(r-t)2\Re(r\varphi\overline{\partial_r(r\varphi)})w(q)\,dr-\int_0^\infty\!(r-t)\abs{r\varphi}^2w'(q)\,dr\\
&\leq-\int_0^\infty\!(r-t)2\Re(r\varphi\overline{\partial_r(r\varphi)})w(q)\,dr\\
&\lesssim\sqrt{\int_0^\infty\!\abs{r\varphi}^2w\,dr}\sqrt{\int_0^\infty\!(r-t)^2\abs{\partial_r(r\varphi)}^2w\,dr}.
\end{align*}
Since $2\partial_r=(L-\underline{L})$ and $(r-t)^2\leq (r+t)^2$, it follows that 
\[
\int_0^\infty\!\abs{\varphi}^2w\,dr\lesssim\int_0^\infty\!\Bigl(\jb{t+r}^2\abs{\frac{1}{r}L(r\varphi)}^2+\jb{t-r}\abs{\frac{1}{r}\underline{L}(r\varphi)}^2\Bigr)w\,r^2dr.
\]
This finishes the proof of the Lemma.
\end{proof}
\begin{lem}\label{lem:control_of_varphi2}
Let  $\lim_{\abs{x}\to\infty}\abs{x}^{3/2}\varphi(t, x)=0$, and let $w(q)$ satisfy that $w'(q)\leq 0$ if $q\leq0$ and $w'(q)=0$ if $q>0$, then one has
\begin{equation}
\int_{\Sigma_t}\!\frac{\jb{t-r}^2}{r^2}\abs{\varphi}^2wdx\lesssim\int_{\Sigma_t}\!\Bigl(\jb{t+r}^2\abs{\frac{1}{r}L(r\varphi)}^2+\jb{t-r}\abs{\frac{1}{r}\underline{L}(r\varphi)}^2\Bigr)w\,dx.
\end{equation}	
\end{lem}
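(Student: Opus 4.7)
The strategy parallels that of Lemma~\ref{lem:control_of_varphi1}: I perform a Hardy-type integration by parts that converts the singular factor $1/r^2$ into a derivative, then handle the resulting cross terms by AM--GM and absorption, appealing to the preceding lemma for the leftover zeroth-order piece. After passing to polar coordinates and isolating a single angular direction, it suffices to establish the one-dimensional estimate
\[
\int_0^\infty \jb{r-t}^2 |\varphi|^2\, w\, dr \lesssim \int_0^\infty r^2\,|\varphi|^2\, w\, dr + \int_0^\infty \jb{r-t}^2 \,|\partial_r(r\varphi)|^2\, w\, dr,
\]
since after reintegrating over $\BS^2$ the first term on the right becomes $\int_{\Sigma_t}|\varphi|^2 w\,dx$, which is already controlled by Lemma~\ref{lem:control_of_varphi1}, while the second term can be rewritten in null-frame form using $2\partial_r = L - \underline{L}$ together with $\jb{t-r}^2 \leq \jb{t+r}^2$ (so that the $L$-contribution absorbs the extra weight while the $\underline{L}$-contribution keeps its $\jb{t-r}^2$).

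To prove the reduced inequality, I rewrite $\jb{r-t}^2 |\varphi|^2 = \jb{r-t}^2 |r\varphi|^2/r^2$ and exploit the identity $1/r^2 = -\partial_r(1/r)$ to integrate by parts. The boundary contributions vanish at $r=0$ thanks to the factor of $r$ carried by $r\varphi$ (which makes $|r\varphi|^2/r = r|\varphi|^2 \to 0$), and at $r=\infty$ by the hypothesis $|x|^{3/2}\varphi \to 0$, which yields $r^3|\varphi|^2 \to 0$. Expanding the derivative of $\jb{r-t}^2|r\varphi|^2 w$ produces three interior terms,
\[
\int_0^\infty 2(r-t)\, r\,|\varphi|^2\, w\, dr + \int_0^\infty 2\jb{r-t}^2 \Re\bigl(\varphi\,\overline{\partial_r(r\varphi)}\bigr)\, w\, dr + \int_0^\infty \jb{r-t}^2\, r\,|\varphi|^2\, w'\, dr,
\]
of which the last has a favorable sign ($w'\leq 0$ on $\{q\leq 0\}$ and $w'=0$ on $\{q>0\}$) and is discarded. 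For the first two I apply AM--GM in the forms
\[
\bigl|2(r-t)\,r\,|\varphi|^2\bigr| \leq \epsilon\, \jb{r-t}^2 |\varphi|^2 + \epsilon^{-1}\, r^2 |\varphi|^2, \qquad \bigl|2\jb{r-t}^2\varphi\,\partial_r(r\varphi)\bigr| \leq \epsilon\, \jb{r-t}^2 |\varphi|^2 + \epsilon^{-1}\,\jb{r-t}^2 |\partial_r(r\varphi)|^2,
\]
and take $\epsilon$ small (say $\epsilon = 1/4$) so that the two $\jb{r-t}^2|\varphi|^2$ error terms are absorbed into the left-hand side, leaving exactly the reduced inequality above.

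The only genuinely delicate point is the choice of the integration-by-parts identity $\partial_r(-1/r) = 1/r^2$: the factor $1/r$ it generates at the origin has to be compensated by the factor of $r$ hidden inside $r\varphi$ for the boundary contribution at $r=0$ to vanish, which is why the lemma is naturally formulated with the weight $\jb{t-r}^2/r^2$ in front of $|\varphi|^2$ rather than, say, $\jb{t-r}^2$ alone. Everything else is a standard AM--GM absorption or a direct appeal to Lemma~\ref{lem:control_of_varphi1}.
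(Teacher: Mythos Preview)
Your proof is correct and essentially the same as the paper's. The only cosmetic difference is the packaging of the integration by parts: the paper writes $\abs{\varphi}^2 = 2\Re(\varphi\,\overline{\partial_r(r\varphi)}) - \partial_r(r\abs{\varphi}^2)$ and integrates the second term by parts, whereas you write $\abs{\varphi}^2 = -\abs{r\varphi}^2\,\partial_r(1/r)$ and move the derivative the other way; both computations produce exactly the same three interior terms, the $w'$ term is dropped by sign, and then the paper applies Cauchy--Schwarz where you apply AM--GM with a small parameter, which amounts to the same thing.
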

\begin{proof}
We have $\abs{\varphi}^2=2\Re(\varphi\overline{\partial_r(r\varphi)})-\partial_r(r\abs{\varphi}^2)$ and thus by Cauchy-Schwarz
\begin{align*}
\int_0^\infty\!\jb{t-r}^2\abs{\varphi}^2w(q)\,dr&=\int_0^\infty\!\jb{t-r}^22\Re(\varphi\overline{\partial_r(r\varphi)})w(q)\,dr-\int_0^\infty\!\jb{t-r}^2\partial_r(r\abs{\varphi}^2)w(q)\,dr\\
&=\int_0^\infty\!\jb{t-r}^22\Re(\varphi\overline{\partial_r(r\varphi)})w(q)\,dr+\int_0^\infty\!\Bigl(2(r-t)w(q)+\jb{t-r}^2w'(q)\Bigr)r\abs{\varphi}^2\,dr\\
&\leq \int_0^\infty\!\jb{t-r}^22\Re(\varphi\overline{\partial_r(r\varphi)})w(q)\,dr+\int_0^\infty\!2(r-t)w(q)r\abs{\varphi}^2\,dr\\
&\lesssim\Bigl(\int_0^\infty\!\jb{t-r}^2\abs{\varphi}^2w(q)\,dr\Bigr)^{1/2}\Bigl(\int_0^\infty\!\jb{t-r}^2\abs{\partial_r(r\varphi)}^2\,dr+\int_0^\infty\!\abs{r\varphi}^2w(q)\,dr\Bigr)^{1/2}
\end{align*}
Then the assertion of Lemma ~\ref{lem:control_of_varphi2} follows from Lemma ~\ref{lem:control_of_varphi1}.
\end{proof}

\begin{proof}[Proof of Proposition ~\ref{prop:control_of_norm}]
Lemma ~\ref{lem:control_of_varphi1} and Lemma ~\ref{lem:control_of_varphi2} give the first estimate ~\eqref{eq:control_of_norm1}. We notice that
\[
\int_{\Sigma_t}\!\Bigl(\jb{t+r}^2\abs{L(\varphi)}^2+\left(\jb{t+r}^2+\jb{t-r}^2\right)\abs{\slashed{\partial}\varphi}^2+\jb{t-r}^2\abs{\underline{L}(\varphi)}^2+\abs{\varphi}^2\Bigr)w\,dx\lesssim \norm{\varphi(t,\cdot)}_{1,+}.
\]
Then the second estimate ~\eqref{eq:control_of_norm2} follows from the identities ~\eqref{eq:vectorfield_identity}.
\end{proof}

\subsection{Spacetime version of Hardy inequality and Klainerman-Sobolev inequality}\label{subsec:poincareineq}

We now establish a spacetime version of the Hardy inequality which will be repeatedly used in the course of the proof of our main theorem.
\begin{lem}\label{lem:poincareineq}
For any differentiable function $\varphi$ satisfying $\varphi=\partial_t\varphi=0$ at $t=t_2$ and any nonnegative function $w=w(q)$, we have \begin{equation}
\int_{t_1}^{t_2}\!\int_{\Sigma_t}\!\abs{\varphi}^2w\,dx\,dt\lesssim \int_{t_1}^{t_2}\!\int_{\Sigma_t}\jb{t+r}^2\abs{\frac{1}{r}L(r\varphi)}^2w\,dx\,dt.
\end{equation}
\end{lem}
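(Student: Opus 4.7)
The plan is to pass to null coordinates $p=t+r$, $q=r-t$ and reduce everything to a one-dimensional Hardy-type inequality along the outgoing null direction, and then integrate the result against $w(q)\,dq\,dS(\omega)$. At fixed $(q,\omega)$ one has $L=2\partial_p$, the volume element becomes $dx\,dt=\tfrac12\,r^2\,dp\,dq\,dS(\omega)$, and on setting $f:=r\varphi$ the desired estimate will follow from
\[
\int_{p_{\min}(q)}^{q+2t_2}|f(p,q,\omega)|^2\,dp \;\lesssim\; \int_{p_{\min}(q)}^{q+2t_2}\jb{p}^2\,|\partial_p f(p,q,\omega)|^2\,dp,
\]
for each $(q,\omega)$, where the right endpoint $p=q+2t_2$ corresponds to $\{t=t_2\}$ (on which $f=0$ by the hypothesis $\varphi|_{t=t_2}=0$) and $p_{\min}(q)=\max(2t_1+q,\,-q)$ describes the $p$-boundary of the slab determined by $\{t=t_1\}$ and $\{r=0\}$.

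To obtain the one-dimensional inequality I will integrate by parts with the ``Hardy weight'' $p-p_{\min}(q)$,
\[
\int |f|^2\,dp \;=\; -\int (p-p_{\min})\,\partial_p|f|^2\,dp \;=\; -2\int(p-p_{\min})\,\Re\!\bigl(\bar f\,\partial_p f\bigr)\,dp,
\]
and then apply Cauchy--Schwarz to get $\int|f|^2\,dp\leq 4\int(p-p_{\min})^2|\partial_p f|^2\,dp$. The boundary contributions drop out automatically: at $p=q+2t_2$ because $f=0$ there, and at $p=p_{\min}(q)$ because either the Hardy weight vanishes or, in the sub-case $p_{\min}=-q$, one also has $r=0$ and hence $f=0$.

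It remains to establish the geometric comparison $(p-p_{\min}(q))^2\lesssim \jb{p}^2$ on the slab. This splits into two cases: when $q\geq -t_1$, $p-p_{\min}=p-2t_1-q=2(t-t_1)\leq 2(t+r)\leq 2\jb{p}$; when $q<-t_1$, $p-p_{\min}=p+q=2r\leq 2(t+r)\leq 2\jb{p}$. Plugging this bound into the one-dimensional estimate, multiplying by $w(q)$, integrating in $(q,\omega)$, and undoing the change of variables gives the claimed inequality with a universal constant.

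I expect the only delicate point to be identifying the Hardy weight $p-p_{\min}(q)$ that simultaneously kills both endpoint boundary terms --- the right endpoint using the hypothesis $\varphi|_{t=t_2}=0$ and the left endpoint via the geometric degeneration at $r=0$; once this choice is made the rest is routine integration by parts, Cauchy--Schwarz, and a two-case comparison. Note that $\partial_t\varphi|_{t=t_2}=0$ plays no role in this argument; only $\varphi|_{t=t_2}=0$ is needed here, so the stronger hypothesis presumably serves later applications.
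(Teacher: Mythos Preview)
Your argument is correct and follows essentially the same route as the paper's: change to null coordinates, set $f=r\varphi$, integrate by parts in $p$ against a Hardy weight, and close with Cauchy--Schwarz. The only difference is cosmetic: the paper uses the weight $p=t+r$ and discards the lower boundary contribution by sign (it is nonnegative since $t_1\ge 0$), whereas you use $p-p_{\min}(q)$, which vanishes at $p=p_{\min}$ but then requires your two-case comparison with $\jb{p}$; your remark that $\partial_t\varphi|_{t=t_2}=0$ plays no role in the proof is also correct.
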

\begin{proof}
	We rewrite the spacetime integral in null coordinates $p=t+r$ and $q=r-t$ as
\[
	\int_{t_1}^{t_2}\!\int_{\Sigma_t}\!\abs{\varphi}^2w(q)\,dx\,dt
	=\frac{1}{2}\int_{\mathbb{S}^2}\!\int_{-t_2}^\infty\!\int_{p_1}^{p_2}\!\abs{r\varphi}^2w(q)\,dp\,dq\,dS(\omega),
	\]
	and applying integration by parts we find that 
	\begin{align*}
\int_{p_1}^{p_2}\!\abs{r\varphi}^2w(q)\,dp&=\int_{p_1}^{p_2}\!\partial_p((t+r))\abs{r\varphi}^2w(q)\,dp\\
&=(t+r)\abs{r\varphi}^2w(q)\big|_{p_1}^{p_2}-\int_{p_1}^{p_2}\!(t+r)\Re(r\varphi\overline{L(r\varphi)})w(q)\,dp.
	\end{align*}
By the assumption that $\varphi=\partial_t\varphi=0$ at $t=t_2$	and Cauchy Schwarz we obtain that 
\[
\int_{p_1}^{p_2}\!\abs{r\varphi}^2w(q)\,dp\leq\Bigl(\int_{p_1}^{p_2}\!\abs{r\varphi}^2w(q)\,dp\Bigr)^{1/2}\Bigl(\int_{p_1}^{p_2}\!(t+r)^2\abs{\frac{1}{r}L(r\varphi)}^2w(q)\,r^2dp\Bigr)^{1/2}
\]
Integrating over the sphere and along $q$-direction yields the statement of the Lemma.
\end{proof}
We now record the Klainerman-Sobolev inequality, see for instance \cite[Chapter \Romanupper{6}]{Horm97} and  \cite[Chapter \Romanupper{2}]{Sogge08} for a proof.
\begin{prop}\label{prop:KSineq}
Let $\varphi\in C^\infty(\BR^{1+3})$ vanish when $\abs{x}$ is large. Then 
\begin{equation}
(1+t+\abs{x})(1+\bigl\lvert t-\abs{x}\bigr\rvert)^{1/2}\abs{\varphi(t, x)}\leq C\sum_{\abs{I}\leq 2}\norm{Z^I\varphi(t,\cdot)}_{L^2(\BR^3)}.
\end{equation}
\end{prop}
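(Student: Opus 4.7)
The plan is a standard localization-plus-rescaling argument that reduces the weighted pointwise bound to the ordinary Sobolev embedding $H^2(\BR^3)\hookrightarrow L^\infty(\BR^3)$ on a unit cube. When $1+t+|x_0|\lesssim 1$ the weight on the left is bounded and the inequality is the classical Sobolev embedding applied with the translations $\partial_i$, which are among the vector fields $Z$; so one may assume $t+r_0\gg 1$, where $r_0=|x_0|$.

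Set $R=c(1+t+r_0)$ and $\rho=c(1+|t-r_0|)$ for a sufficiently small constant $c$, and introduce a curvilinear box $B$ centered at $x_0$ of dimensions $R\times R\times\rho$, with the two sides of length $R$ tangent to the sphere $\{|x|=r_0\}$ and the side of length $\rho$ radial. Choosing $c$ small enough, $B$ avoids both the origin and the light cone $\{r=t\}$, and satisfies $t+|x|\sim R$, $|t-|x||\sim\rho$, $|x|\gtrsim R$ throughout. The identities \eqref{eq:tanderi}--\eqref{eq:vectorfield_identity} then yield pointwise on $B$
\[
R\,|\slashed{\partial}\varphi|+\rho\,|\partial_r\varphi|\lesssim\sum_{|I|=1}|Z^I\varphi|,
\]
since $\Omega_{ij}$ controls $r\slashed\partial$ with $r\sim R$, while $\partial_r=\partial_p+\partial_q$ is handled by $\partial_p=(S+\omega^i\Omega_{0i})/(2(t+r))$ and $\partial_q=(S-\omega^i\Omega_{0i})/(2(r-t))$, the latter providing the dominant scale $\rho^{-1}\sim|t-r|^{-1}$.

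Rescaling $B$ to $[0,1]^3$ by $\tilde\varphi(y)=\varphi(t, x_0+y^1 R e^{(1)}+y^2 R e^{(2)}+y^3\rho e^{(3)})$, where $(e^{(1)}, e^{(2)})$ is an orthonormal tangent frame to the sphere at $x_0$ and $e^{(3)}$ is radial, each $\partial_{y^j}\tilde\varphi$ becomes either $R\slashed\partial\varphi$ or $\rho\partial_r\varphi$, which is bounded pointwise on $B$ by $\sum_{|I|\leq 1}|Z^I\varphi|$; iterating (and absorbing the commutator terms, which are lower-order in $Z$ with no loss of $R$ or $\rho$) gives $|\partial_y^\alpha\tilde\varphi|\lesssim\sum_{|I|\leq|\alpha|}|Z^I\varphi|$ for $|\alpha|\leq 2$. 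The standard Sobolev embedding on $[0,1]^3$ combined with the change of variables $dy=(R^2\rho)^{-1}dx$ then gives
\[
|\varphi(t, x_0)|^2\lesssim\sum_{|\alpha|\leq 2}\|\partial_y^\alpha\tilde\varphi\|_{L^2([0,1]^3)}^2\lesssim\frac{1}{R^2\rho}\sum_{|I|\leq 2}\|Z^I\varphi(t,\cdot)\|_{L^2(B)}^2,
\]
which, after taking square roots, is the claimed inequality since $R^2\rho\sim(1+t+r_0)^2(1+|t-r_0|)$.

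The main technical obstacle is the degenerate regime $r_0\lesssim 1$ (near the time axis) where the angular dimension $R\gg r_0$ exceeds the sphere's radius and the curvilinear chart collapses. I would handle this case separately using a Euclidean ball of radius $\sim t/4$ around $x_0$, which lies entirely inside the light cone, together with the identity $t\partial_i=\Omega_{0i}-x_i\partial_t$: since $|x_i|\lesssim 1$ on that ball, $|\partial_i\varphi|\lesssim t^{-1}\sum_{|I|\leq 1}|Z^I\varphi|$, which iterates to $|\partial^\alpha\varphi|\lesssim t^{-|\alpha|}\sum_{|I|\leq|\alpha|}|Z^I\varphi|$; rescaling by factor $t$ in Sobolev then yields the matching $t^{-3/2}\sim(1+t+r_0)^{-1}(1+|t-r_0|)^{-1/2}$ decay.
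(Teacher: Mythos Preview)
The paper does not prove this proposition; it records the statement and refers to H\"ormander and Sogge for the standard argument. Your localization-plus-rescaling outline is essentially that argument, and the wave-zone portion ($r_0\gtrsim t$) is correct.

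The far-interior treatment, however, has a genuine gap. First, the degenerate regime is all of $r_0\ll t$, not merely $r_0\lesssim 1$: once $r_0<t/2$ both the radial side $\rho\sim 1+|t-r_0|\sim t$ and the tangential side $R\sim t$ exceed $r_0$, so your curvilinear box would pass through the origin and wrap around the sphere. Second, in your Euclidean-ball fallback the claim ``$|x_i|\lesssim 1$ on that ball'' is simply false for a ball of radius $\sim t/4$---points there have $|x|\lesssim t$---so the identity $t\partial_i=\Omega_{0i}-x_i\partial_t$ alone gives only $|t\partial_i\varphi|\lesssim|\Omega_{0i}\varphi|+t|\partial_t\varphi|$, and after rescaling the Sobolev estimate picks up an uncontrolled factor of $t$ in front of $\|\partial_t\varphi\|_{L^2}$. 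The fix is to use Lemma~\ref{lem:pointwise_estimate_of_der} in full: the scaling field $S$ is essential here, through $(t\pm r)(\partial_t\pm\partial_r)=S\pm\omega^i\Omega_{0i}$, giving $(1+|t-r|)|\partial\varphi|\lesssim\sum_{|I|=1}|Z^I\varphi|$, which on $\{|x|<3t/4\}$ reads $|\partial\varphi|\lesssim t^{-1}\sum|Z^I\varphi|$. Iterating and rescaling a Euclidean cube of side $\sim t$ then yields the correct $t^{-3/2}$ factor for every $r_0<t/2$, and the two regimes together cover everything.
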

Finally we derive a weighted version of Klainerman-Sobolev inequality that is adapted to the norm ~\eqref{eq:norm1}.
\begin{lem}\label{lem:KS_with_weight}
Let the weight function $w(q)$ be defined as 
\[w=
\begin{cases}
1+(1+q)^{-\delta}, \quad&q\geq0,\\
1+(1+\abs{q})^\nu, \quad&q<0.
\end{cases}
\]
with $\delta>0,  0<\nu<1$. For any function $\varphi\in C^\infty(\BR^{1+3})$ which vanishes when $\abs{x}$ is large, we have
\begin{equation}
(1+t+\abs{x})(1+\bigl\lvert t-\abs{x}\bigr\rvert)^{1/2}w(q)^{1/2}\abs{\varphi(t, x)}\lesssim \sum_{\abs{I}\leq 2}\norm{Z^I\varphi(t,\cdot)}_{L^2(w)}.
\end{equation}
\end{lem}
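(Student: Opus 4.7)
The plan is to reduce the weighted inequality to the standard Klainerman--Sobolev inequality (Proposition~\ref{prop:KSineq}) by applying it to the auxiliary function
\[
\tilde{\varphi}(t,x) := w(q)^{1/2}\varphi(t,x),
\]
which still vanishes for $\abs{x}$ large. Proposition~\ref{prop:KSineq} applied to $\tilde\varphi$ yields
\[
(1+t+\abs{x})(1+\bigl\lvert t-\abs{x}\bigr\rvert)^{1/2}w(q)^{1/2}\abs{\varphi(t, x)}\leq C\sum_{\abs{I}\leq 2}\norm{Z^I\tilde\varphi(t,\cdot)}_{L^2(\BR^3)},
\]
so the only remaining task is to establish the $L^2$-side estimate
\[
\norm{Z^I\tilde\varphi(t,\cdot)}_{L^2(\BR^3)}\lesssim\sum_{\abs{J}\leq \abs{I}}\norm{Z^J\varphi(t,\cdot)}_{L^2(w)}, \qquad \abs{I}\leq 2.
\]

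By Leibniz's rule~\eqref{eq:leibniz}, $Z^I\tilde\varphi=\sum_{J+K=I}Z^J\varphi\cdot Z^K(w(q)^{1/2})$, and it therefore suffices to prove the pointwise bound
\begin{equation}\label{eq:proposal_key}
\abs{Z^K(w(q)^{1/2})}\lesssim w(q)^{1/2},\qquad \abs{K}\leq 2.
\end{equation}
For this I would use that $w$ depends only on the scalar $q=r-t$: the vector fields act on $q$ tamely, namely $\abs{\partial_\alpha q}\lesssim 1$, $\Omega_{ij}q=0$, $\Omega_{0i}q=-q\omega_i$, $Sq=q$, and iterating these (together with $[\partial_\alpha,Z]$-type commutations from~\eqref{eq:com_iden}) one checks $\abs{Z^K q}\lesssim\jb{q}$ for $\abs{K}\leq 2$. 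Writing $h:=w^{1/2}$, the chain rule then reduces \eqref{eq:proposal_key} to the one-variable estimate
\[
\abs{q}^{j}\abs{h^{(j)}(q)}\lesssim w(q)^{1/2},\qquad j\leq 2,
\]
which, via $h'=w'/(2h)$ and $h''=w''/(2h)-(w')^2/(4h^3)$ together with $w\geq 1$, is in turn implied by
\begin{equation}\label{eq:proposal_weight}
\abs{q}^{j}\abs{w^{(j)}(q)}\lesssim w(q), \qquad j\leq 2.
\end{equation}

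Finally I would verify \eqref{eq:proposal_weight} by direct differentiation of the explicit weight. For $q\geq 0$ one has $\abs{w^{(j)}(q)}=c_j(1+q)^{-j-\delta}$, and hence $\abs{q}^{j}\abs{w^{(j)}(q)}\leq c_j(1+q)^{-\delta}\leq c_j w(q)$. For $q<0$ one has $\abs{w^{(j)}(q)}=c_{j,\nu}(1+\abs{q})^{\nu-j}$, and since $0<\nu<1$ the factor $(1+\abs{q})^{\nu-j}$ is integrable against $\abs{q}^j$ in the sense $\abs{q}^{j}(1+\abs{q})^{\nu-j}\leq(1+\abs{q})^{\nu}\leq w(q)$. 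Combining these observations proves~\eqref{eq:proposal_weight} and hence the lemma.

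The only mildly delicate point is controlling $Z^K(w(q)^{1/2})$ uniformly in $q$: the chain rule produces negative powers of $w$ from differentiating a square root, and one must see that the resulting factors $(w')^2/w^{3/2}$ etc.\ are absorbed. This is however guaranteed by $w\geq 1$ together with the explicit polynomial form of $w$ on each side of $q=0$, so the whole argument is reduced to the routine verifications above and no structural difficulty arises.
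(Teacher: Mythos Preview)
Your approach is essentially the paper's: multiply by $w^{1/2}$, apply the unweighted Klainerman--Sobolev inequality, and then show that the vector fields applied to the weight are harmless. The only structural difference is that the paper first observes that $1\leq w\leq 2$ when $q\geq 0$, so the exterior is trivial, and therefore localizes with a cutoff $\chi(q)$ supported in $\{q\leq 1/2\}$ before multiplying by $w^{1/2}$; you instead treat both regions at once, which is fine since your verification of \eqref{eq:proposal_weight} covers $q\geq 0$ as well.

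There is, however, a genuine slip in your reduction. The claim $\abs{Z^K q}\lesssim\jb{q}$ for $\abs{K}\leq 2$ is false when $Z^K$ contains two spatial translations: since $\partial_i q=\omega_i$ one gets $\partial_i\partial_j q=(\delta_{ij}-\omega_i\omega_j)/r$, which blows up like $1/r$ near the spatial origin and is certainly not $\lesssim\jb{q}$. Consequently the pointwise bound \eqref{eq:proposal_key} fails there (take $t>0$, $r\to 0$: the term $h'(q)(\delta_{ij}-\omega_i\omega_j)/r$ in $\partial_i\partial_j(w^{1/2})$ is of size $\sim 1/r$ while $w^{1/2}$ stays bounded). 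The paper's proof is terse at exactly this spot too---it only records $\abs{Zw^{1/2}}\lesssim w^{1/2}$ for a single $Z$ and then jumps to the $\abs{I}\leq 2$ conclusion---so you are in good company, but as written your argument does not close.

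The repair is short: you do not need the pointwise bound, only the $L^2$ one. Since $0<\nu<1$ and $\delta>0$, one checks directly that $\abs{h'(q)}\lesssim 1$ uniformly in $q$, so the problematic contribution to $Z^I\tilde\varphi$ is controlled by $\abs{\varphi}/r$, and the standard three-dimensional Hardy inequality gives $\norm{\varphi/r}_{L^2}\lesssim\norm{\nabla\varphi}_{L^2}\leq\sum_{\abs{J}=1}\norm{Z^J\varphi}_{L^2(w)}$. With this in hand the $L^2$-side estimate you need goes through; all the remaining terms in the Leibniz expansion are handled by the one-variable bounds \eqref{eq:proposal_weight} exactly as you wrote.
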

\begin{proof}
It suffices to prove the Lemma in the interior region $q\leq0$. Let $\chi$ be a smooth cutoff such that $\chi(q)=1$ when $q\leq0$, and $\chi(q)=0$ when $q\geq1/2$. Define $\widetilde{\varphi}(t, x):=\chi(q)\varphi(t, x)w(q)^{1/2}$. By Proposition ~\ref{prop:KSineq}, for any $(t, x)$ such that $\abs{x}\leq t$, we find that 
\begin{equation}\label{eq:KS}
(1+t+\abs{x})(1+\bigl\lvert t-\abs{x}\bigr\rvert)^{1/2}\abs{\widetilde{\varphi}(t, x)}\leq C\sum_{\abs{I}\leq 2}\norm{Z^I\widetilde{\varphi}(t,\cdot)}_{L^2(\BR^3)}.
\end{equation}
Since $w(q)$ and $\chi(q)$ depend only on $q$, only the $\underline{L}$ component of their derivative makes contribution. In view of the identities ~\eqref{eq:vectorfield_identity}, we see that for any $Z$
\[
\abs{Z\chi}\lesssim\abs{q\chi'(q)}\lesssim 1, \quad \abs{Zw^{1/2}}\lesssim w^{1/2}.
\]
As a result, the right-hand side of ~\eqref{eq:KS} is $\lesssim \sum_{\abs{I}\leq 2}\norm{Z^I\varphi(t,\cdot)}_{L^2(w)}$, which proves the Lemma. 
\end{proof}

\section{Asymptotics at infinity}\label{sec:3}

In this section we consider the asymptotics in the forward problem. First we express the charge in terms of the radiation field $\Phi$ in Subsection ~\ref{subsec:charge}. Then, we establish the asymptotics result which is a refinement of the result from ~\cite[Theorem 1.1]{CKL19} in Subsection ~\ref{subsec:asymptotics}. Finally, we deduce an ODE the radiation set $(\Phi, \mathcal{A}_\alpha)$ satisfies due to the \emph{asymptotic Lorenz condition} in Subsection ~\ref{subsec:asym_lorenz}.

\subsection{The charge}\label{subsec:charge}
In the forward problem, the charge $\BFq$ is defined in terms of the initial data at $t=0$. In contrast, for the backward problem, we expect to define the charge $\BFq$ in terms of the scattering data. 

In the forward problem, it was observed that the asymptotics of $rA_L$  close to the light cone is $\frac{\BFq}{4\pi}$. On the other hand it can also be deduced from the asymptotics for the wave equation with a quadratic source term given by $\Im(\Phi\overline{\partial_q\Phi})$. Therefore we expect that there is a relation between $\BFq$ and $\Im(\Phi\overline{\partial_q\Phi})$. Following the proof of \cite[Proposition 28]{Lind17} , we establish the relation between $\BFq$ and $\Im(\Phi\overline{\partial_q\Phi})$.
\begin{prop}\label{prop:definition_of_charge}
If $(\phi, A_\alpha)$ is a global solution to the reduced MKG equations ~\eqref{eq:rmkg}, let $\BFq$ and $\Phi(q, \omega)$ be defined by ~\eqref{eq:def_of_q_for} and ~\eqref{eq:asym_of_phi} respectively. Then we have
\begin{equation}
\BFq=\int_{-\infty}^\infty\!\int_{\BS^2}\!\mathcal{J}_0(q, \omega)\, dS(\omega)\,dq=\int_{-\infty}^\infty\!\int_{\BS^2}\!-\Im\bigl(\Phi(q, \omega)\overline{\partial_q\Phi(q, \omega)}\bigr)\, dS(\omega)\,dq.
\end{equation} 
\end{prop}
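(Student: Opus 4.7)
The plan is to exploit conservation of the charge. Since $\partial^\alpha J_\alpha=0$, the quantity $\int_{\BR^3}\!J_0(t,x)\,dx$ is independent of $t$, so I may compute it in the limit $t\to\infty$ and match against the asymptotic expansion of the solution at null infinity from \cite{CKL19}. The idea is to show that the dominant contribution in this limit is exactly the claimed angular-and-$q$ integral of $\mathcal{J}_0$, while every other piece of $J_0$ decays as $t\to\infty$.

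Concretely, I would decompose $J_0=\Im(\phi\,\overline{\partial_t\phi})-A_0\abs{\phi}^2$ and insert the expansion $\phi(t,r\omega)\sim e^{-i\frac{\BFq}{4\pi}\ln(1+r)}\Phi(q,\omega)/r$. The phase cancels in $\phi\,\overline{\partial_t\phi}$, and $\partial_t q=-1$ yields, to leading order,
\[
\Im(\phi\,\overline{\partial_t\phi})\sim -\frac{1}{r^2}\Im\bigl(\Phi(q,\omega)\,\overline{\partial_q\Phi(q,\omega)}\bigr)=\frac{\mathcal{J}_0(q,\omega)}{r^2}.
\]
Writing $dx=r^2\,dr\,dS(\omega)=r^2\,dq\,dS(\omega)$ at fixed $t$ and restricting the integral to the wave zone $\abs{q}\leq t/2$, the contribution of this principal term equals
\[
\int_{-t/2}^{t/2}\!\int_{\BS^2}\!\mathcal{J}_0(q,\omega)\,dS(\omega)\,dq+o_t(1),
\]
which, by $\abs{\mathcal{J}_0}\lesssim\ep^2\jb{q}^{-2\gamma}$ and $\gamma>1/2$, converges to $\int_{-\infty}^\infty\!\int_{\BS^2}\!\mathcal{J}_0\,dS(\omega)\,dq$ as $t\to\infty$.

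It remains to show that every other contribution vanishes. In the far interior $\abs{q}\gtrsim t$, the bound $\abs{\Phi}\leq\ep\jb{q}^{-\gamma}$ (and its analogue for $\partial_q\Phi$) gives $\int\abs{\phi}^2+\abs{\partial_t\phi}^2\,dx=O(\ep^2 t^{1-2\gamma})\to 0$ by direct integration. For the term $A_0\abs{\phi}^2$ in the wave zone, I use \eqref{eq:A_in_nullframe} to write $A_0=\tfrac12(A_L+A_{\underline L})$, whose leading behavior from \eqref{eq:K_ex} is $\BFq/(8\pi r)$, hence $\abs{A_0\abs{\phi}^2}\lesssim \ep^2\jb{q}^{-2\gamma}/r^3$; integrating against $r^2\,dr$ in the wave zone bounds this by $O(t^{-1})$. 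Subleading corrections to the expansions of $\phi$, $\partial_t\phi$, and $A_0$ from \cite{CKL19} are smaller by an extra factor of $1/r$ or $1/\jb{q}$ and contribute similarly. The main obstacle is making the asymptotic expansions sufficiently quantitative, with integrable-in-space remainders uniform in $t$, so that dominated convergence is applicable; this reduces to combining the pointwise asymptotics of \cite[Theorem 1.1]{CKL19} with the $\jb{q}^{-\gamma}$ decay of $\Phi$ and $\partial_q\Phi$, much as in the proof of \cite[Proposition 28]{Lind17}.
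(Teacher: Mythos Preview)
Your approach is correct in outline but takes a genuinely different route from the paper. You compute $\BFq=\int J_0(t,\cdot)\,dx$ directly by inserting the null-infinity expansion of $\phi$ and sending $t\to\infty$, relying on charge conservation and dominated convergence. The paper instead works on the gauge-potential side: it uses the explicit integral representation $\widetilde{A}_\alpha$ of the source contribution from \cite[Proposition~9.3]{CKL19}, computes the spherical average $\int_{\BS^2}r\widetilde{A}_L\,dS(\omega)/4\pi$, shows it converges to $\int_q^\infty\!\int_{\BS^2}\mathcal{J}_0\,dS\,d\rho$ as $r\to\infty$ with $q$ fixed, and then matches this against the known fact $rA_L\to\BFq/4\pi$ from \cite[Theorem~1.1]{CKL19}; taking $q\to-\infty$ finishes.

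Each route has its own cost. Yours is conceptually more direct but requires quantitative asymptotics for $\partial_t\phi$ (not just $\phi$), uniform pointwise decay of $\phi,\partial_t\phi$ in the far interior, and a careful treatment of $A_0\abs{\phi}^2$---note that $A_{\underline L}$ carries a $\ln(\jb{t+r}/\jb{t-r})$ factor, so your claimed $O(t^{-1})$ bound for $\int A_0\abs{\phi}^2\,dx$ should be $O(t^{-1}\ln t)$, which still vanishes. The paper's route avoids differentiating the expansion of $\phi$ and instead leverages detailed asymptotics of $A_L$ already established in \cite{CKL19}, at the price of a somewhat technical computation of the spherical average of $\widetilde{A}_L$.
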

\begin{proof}
Following ~\cite[Proposition 9.3]{CKL19} we have
\[
\widetilde{A}_\alpha(t, x)=\frac{1}{4\pi}\int_q^\infty\!\int_{\BS^2}\!\frac{\mathcal{J}_\alpha(\rho, \omega)}{t+\rho-\angles{x}{\omega}}\,dS(\omega)\chi\bigl(\frac{\jb{\rho}}{t+r}\bigr)\,d\rho
\]
with $\abs{A_\alpha-\widetilde{A}_\alpha}\lesssim \ep^2\jb{t+r}^{-1}\jb{t-r}^{-\gamma}$ for some $1/2<\gamma<1$ and small positive constant $\ep$. Here $\mathcal{J}_\alpha(\rho, \omega)=L_\alpha(\omega)\Im(\Phi(\rho, \omega)\overline{\partial_q\Phi(\rho, \omega)})
$ with $\norm{\Phi}_{N, \infty, \gamma}\lesssim\ep$ and $\chi(s)$ is a smooth cutoff such that $\chi(s)=1$ when $s\leq1/2$, and $\chi(s)=0$ when $s\geq 3/4$. Then we find that 
\[
\widetilde{A}_L(t, r\omega)=\frac{1}{r}\int_q^\infty\!\int_{\BS^2}\!\frac{(1-\angles{\omega}{\sigma})\mathcal{J}_0(\rho, \sigma)}{(\rho-q)/r+1-\angles{\omega}{\sigma}}\frac{dS(\sigma)}{4\pi}\chi\bigl(\frac{\jb{\rho}}{t+r}\bigr)\,d\rho.
\]
Thus
\begin{align*}
\int_{\BS^2}\!r\widetilde{A}_L(t, r\omega)\frac{dS(\omega)}{4\pi}&=\int_q^\infty\!\int_{\BS^2}\!\int_0^2\!\frac{\omega_1\mathcal{J}_0(\rho, \sigma)}{(\rho-q)/r+\omega_1}\,d\omega_1\frac{dS(\sigma)}{8\pi}\chi\bigl(\frac{\jb{\rho}}{t+r}\bigr)\,d\rho\\
&=\int_q^\infty\!\int_{\BS^2}\!\Bigl(2-a\ln\frac{2+a}{a}\Bigr)\mathcal{J}_0(\rho, \sigma)\frac{dS(\sigma)}{8\pi}\chi\bigl(\frac{\jb{\rho}}{t+r}\bigr)\,d\rho
\end{align*}
where $a=(\rho-q)/r$. Now we estimate the following term when $q\ll0$
 \begin{align*}
& \Bigl\lvert\int_{\BS^2}\!r\widetilde{A}_L(t, r\omega)\frac{dS(\omega)}{4\pi}-\int_q^\infty\!\int_{\BS^2}\!\mathcal{J}_0(\rho, \sigma)\bigr)\frac{dS(\sigma)}{4\pi}\,d\rho\Bigr\rvert\\
 &\qquad=\Bigl\lvert\int_q^\infty\!\int_{\BS^2}\!\Bigl(\bigl(1-\frac{a}{2}\ln\frac{2+a}{a}\bigr)\chi\bigl(\frac{\jb{\rho}}{t+r}\bigr)-1\Bigr)\mathcal{J}_0(\rho, \sigma)\frac{dS(\sigma)}{4\pi}\,d\rho\Bigr\rvert
 \end{align*}
The integral over $\jb{\rho}\geq(t+r)/2$ is easily bounded by 
\[
\int_q^{-\frac{t+r}{2}+1}+\int_{\frac{t+r}{2}-1}^\infty\!2\abs{\mathcal{J}_0(\rho, \sigma)}\,d\rho\lesssim\frac{\ep^2}{\jb{r}}.
\]
So it remains to integrate over $\jb{\rho}\leq (t+r)/2$. In this case, $\chi=1$ and we are left with 
\begin{align*}
\Bigl\lvert\int_q^{\frac{t+r}{2}}\!\int_{\BS^2}\frac{a}{2}\ln\frac{a+2}{a}\mathcal{J}_0(\rho, \sigma)\frac{dS(\sigma)}{4\pi}\,d\rho\Bigr\rvert&\lesssim\int_q^{\frac{t+r}{2}}\!\frac{a}{2}\ln\frac{a+2}{a}\frac{1}{\jb{\rho}^{1+2\gamma}}\,d\rho\\
&\lesssim\int_0^{\frac{3t-r}{2r}}\frac{a}{2}\ln\frac{a+2}{a}\frac{r}{(1+\abs{ar+q})^{1+2\gamma}}\,da\\
&\lesssim_\gamma\ep^2\Bigl(\frac{\abs{q}}{r}\Bigr)^{1/2}
\end{align*}
where we use the fact that $(a/2)\ln[(a+2)/2]$ is an increasing function of $a$ and $\ln[(a+2)/a]\leq 2a^{-1/2}$ for $a>0$. Therefore we conclude that 
\begin{equation}\label{eq:bound1}
\Bigl\lvert\int_{\BS^2}\!r\widetilde{A}_L(t, r\omega)\frac{dS(\omega)}{4\pi}-\int_q^\infty\!\int_{\BS^2}\!\mathcal{J}_0(\rho, \sigma)\bigr)\frac{dS(\sigma)}{4\pi}\,d\rho\Bigr\rvert\lesssim_\gamma\ep^2\Bigl(\frac{\abs{q}}{r}\Bigr)^{1/2}.
\end{equation}
By ~\cite[Theorem 1.1]{CKL19}, when $q\leq0$ we have
\begin{equation}\label{eq:bound2}
\abs{r\widetilde{A}_L(q, \omega, r)-\frac{\BFq}{4\pi}}\lesssim\frac{\ep}{\jb{q_-}^\gamma}+\ep\frac{\jb{q}^{1-\ep}}{\jb{r}^{1-\ep}}.
\end{equation}
Collecting ~\eqref{eq:bound1} and ~\eqref{eq:bound2} and passing to the limit $r\to\infty$ for fixed $q$ we arrive at
\[
\Bigl\lvert\frac{\BFq}{4\pi}-\int_q^\infty\!\int_{\BS^2}\!\mathcal{J}_0(\rho, \sigma)\bigr)\frac{dS(\sigma)}{4\pi}\,d\rho\Bigr\rvert\lesssim\frac{\ep}{\jb{q_-}^\gamma}.
\]
Letting $q\to-\infty$ yields the Proposition.
\end{proof}

\subsection{Asymptotics of the solution to the wave equation with quadratic sources}\label{subsec:asymptotics}

In ~\cite{Lind17}, Lindblad studied the solution to the wave equation with quadratic sources and made a decomposition into a part coming from the source term and a remainder. It was shown that the radiation field for the remainder exists, but only decays in the exterior. In this subsection we establish a refinement of the result from ~\cite[Proposition 19]{Lind17} in the sense that the radiation field for the remainder decays both in the interior and the exterior.
\begin{prop}\label{prop:refined_asymptotics}
	Let $\chi(s)$ be a smooth cutoff satisfying $\chi(s)=1$ when $s\leq 1/2$, and $\chi(s)=0$ when $s\geq 3/4$. Set
	\[
	F[n](t, x)=\frac{n(q, \omega)}{r^2}\chi\bigl(\frac{\jb{q}}{r}\bigr)
	\]
	where $n(q, \omega)$ is a smooth function satisfying \[\norm{n}_{N, \infty, 1+a}\lesssim \ep^2 \quad \text{for} \quad 0<a<2, \quad 0<\ep\ll 1, \quad N\geq5.
	\]
	Let $\varPhi[n]$ be the solution of $\Box \varPhi[n]=-F[n]$ with vanishing data and let
	\begin{align*}
	\varPhi_1[n]&=\biggl(\frac{1}{4\pi}\int_{-\infty}^{\infty}\!\int_{\BS^2}\!\frac{n(\eta, \sigma)}{t-r\angles{\omega}{\sigma}}\, dS(\sigma)\,d\eta\chi_{ex}(\frac{t}{12})-\frac{1}{2r}\ln\frac{2r}{\jb{t-r}}\int_{-\infty}^q\!n(\eta, \omega)\, d\eta\chi\Bigl(\frac{\jb{t-r}}{r}\Bigr)\Biggr)\chi_{in}(r-t)\\
	&\qquad+\frac{1}{2r}\ln\frac{2r}{\jb{t-r}}\int_{q}^\infty\!n(\eta, \omega)\, d\eta\chi\Bigl(\frac{\jb{t-r}}{r}\Bigr)\Bigl(1-\chi_{in}(r-t)\Bigr)
	\end{align*}
	where $\chi_{in}(q), \chi_{ex}(s)\in C_0^\infty$ such that $\chi_{in}(q)=1$ if $q\leq -1$, and $\chi_{in}(q)=0$ if $q\geq-1/2$, while $\chi_{ex}(s)=1$ for $s\geq 1$, and $\chi_{ex}(s)=0$ for $s\leq 1/2$. Let $\psi_0=\varPhi_0[n]=\varPhi[n]-\varPhi_1[n]$. Then the limit
	\[
	\Psi_0^\infty(q, \omega)=\lim_{r\to\infty} \Psi_0(q, \omega, r) \quad \text{where}\quad \Psi_0(q, \omega, r)=r\psi_0(t, r\omega)
	\]	
	exists and satisfies
	\begin{equation}\label{eq:decay}
	\bigl\lvert\Omega_{ij}^I(q\partial_q)^J\partial_q^K\Psi_0^\infty(q, \omega)\bigr\rvert\lesssim
	\begin{cases}
	\ep^2\jb{q}^{-a}, \quad &0<a<1,\\
	C_\mu\ep^2\jb{q_+}^{-a}\jb{q_-}^{-\mu}, \quad &1\leq a<2.
	\end{cases}
	\end{equation}
 with any $0<\mu<1$ and $\abs{I}+\abs{J}+\abs{K}\leq N-5$.
\end{prop}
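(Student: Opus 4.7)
The plan is to derive a wave equation for $\psi_0$ whose right-hand side has strictly better decay than $F[n]$ itself, and then deduce the existence and decay of the radiation field of $\psi_0$ from this improved equation. I would set $G[n] := F[n]+\Box\varPhi_1[n]$, so that $\Box\psi_0 = -G[n]$ with vanishing initial data.

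The heart of the matter is the computation of $\Box\varPhi_1[n]$ and the verification of a key cancellation. The first (Liénard--Wiechert-type) term is homogeneous of degree $-1$ in $(t,x)$; a direct computation and $\sigma$-integration in coordinates aligned with $\omega$ produces the elementary identity $\int_{\BS^2}\!\frac{dS(\sigma)}{t-r\angles{\omega}{\sigma}}=\frac{2\pi}{r}\ln\frac{t+r}{t-r}$ (valid in the interior $r<t$), which after multiplying by $r$ produces a logarithm $\frac{1}{2}\ln\frac{t+r}{t-r}$. Precisely this logarithm (up to lower-order corrections matching $\frac{1}{2}\ln\frac{2r}{\jb{t-r}}$ near the light cone) is what the second and third log-correction terms in $\varPhi_1[n]$ are designed to cancel, while the bulk of the Liénard--Wiechert integral accounts for the timelike-infinity asymptotics~\eqref{eq:K_timelike}. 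After bookkeeping the cancellations, the resulting source $G[n]$ is either supported on cutoff transition regions, where the derivatives of $\chi$, $\chi_{in}$, $\chi_{ex}$ furnish additional decay, or carries an extra factor of $\jb{q}^{-1}$ compared to $F[n]$.

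With this improved source in hand, the existence of $\Psi_0^\infty(q,\omega)=\lim_{r\to\infty}r\psi_0$ and its decay would follow in the spirit of \cite[Proposition 19]{Lind17}: I would use Kirchhoff's representation to write $r\psi_0$ as a convolution of $G[n]$ along the backward light cone, expand in $1/r$ with $q,\omega$ fixed, and show uniform convergence of the $1/r$-order coefficient. In the interior $q\leq -1$, the leading $1/r$-order contribution to $r\varPhi[n]$ itself is exactly the Liénard--Wiechert integral (homogeneous of degree $-1$, hence of order $1/r$ but without $q$-decay); after subtracting $\varPhi_1[n]$ this obstruction is removed, and a pointwise estimate of $G[n]$ integrated along the backward light cone yields the claimed decay $\jb{q_-}^{-\mu}$ when $1\leq a<2$ (respectively $\jb{q}^{-a}$ when $0<a<1$). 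The commuted bounds with $\Omega_{ij}^I(q\partial_q)^J\partial_q^K$ are obtained by commuting these operators through the equation for $\psi_0$, using $[Z,\Box]=0$ or $2\Box$ for the relevant vector fields together with the fact that $\Omega_{ij}$, $q\partial_q$, and $\partial_q$ act naturally on the coordinates $(q,\omega)$ in the limit.

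The main obstacle is the explicit verification of the logarithmic cancellation in the second step. One must simultaneously account for: the singularity of the Liénard--Wiechert integrand at $\sigma=\omega$, integrable only up to a log loss that must exactly match the log correction; the transitional region $q\in[-1,-1/2]$ where $\chi_{in}$ is not constant and both exterior- and interior-type log terms contribute, their difference being the integral $\int_{-\infty}^{\infty}n(\eta,\omega)\,d\eta$ that plays the role of the ``charge''; and the spatial cutoff $\chi(\jb{q}/r)$, whose derivatives localise away from the light cone but produce error terms that must be shown to decay acceptably. Once these cancellations are in place, the existence and decay of $\Psi_0^\infty$ follow by standard radiation-field arguments.
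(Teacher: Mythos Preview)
Your overall plan coincides with the paper's: set $G[n]=F[n]+\Box\varPhi_1[n]$, show $G[n]$ has strictly better decay than $F[n]$, and then read off the radiation field of $\psi_0$ from the improved equation. Two points of detail, however, differ from your description and are worth sharpening.

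First, the cancellation mechanism is not quite the logarithmic identity you highlight. The Li\'enard--Wiechert integral is a solution of the wave equation (homogeneity of degree $-1$), so $\Box$ of the first piece of $\varPhi_1[n]$ produces only terms where derivatives hit the cutoffs $\chi_{ex}(t/12)\chi_{in}(q)$; the leading such term lives on $\operatorname{supp}\chi_{in}'$, and it matches the corresponding $\chi_{in}'$-terms coming from $\Box$ of the log pieces. The genuinely technical step is bounding the ``off-diagonal'' remainder $\int\!\!\int\frac{n(\eta,\sigma)-n(\eta,\omega)}{(t-r\langle\omega,\sigma\rangle)^2}\,dS\,d\eta$: the paper proves a separate lemma (Lemma~\ref{lem:taylor_expansion_in_n}) that Taylor-expands $n(\eta,\sigma)$ around $\sigma=\omega$, uses that the linear-in-$\sigma$ terms integrate to zero over the azimuthal angle, and gains a factor of $(1-\sigma_1)$ that tames the singular denominator. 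The net improvement in $G[n]$ is an extra power of $\jb{t+r}^{-1}$ (up to a logarithm), not an extra $\jb{q}^{-1}$ as you write; this distinction matters for the next step.

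Second, rather than unpacking Kirchhoff directly, the paper invokes two black-box results: a weighted radial estimate (Lemma~\ref{lem:radial_estimate}, from \cite{CKL19}) to convert the source bound $|\Box Z^J\varPhi_0|\lesssim\ep^2\jb{t+r}^{-3}\jb{t-r}^{-a}\ln(\jb{t+r}/\jb{t-r})$ into the pointwise bound~\eqref{eq:condition2} on $Z^J\varPhi_0$, and then a radiation-field existence lemma (Lemma~\ref{lem:existence_of_radiationfield}, from \cite{Lind17}) that takes the pair of bounds on $\Box Z^J\varPhi_0$ and $\Delta_\omega Z^J\varPhi_0$ and outputs~\eqref{eq:decay}. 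Your Kirchhoff route would of course reprove these, but the paper's packaging is cleaner and explains why one needs control on $\Delta_\omega$ (hence the loss of derivatives $N\mapsto N-5$).
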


\begin{rem}\label{rem:refined_asym}
	 We can obtain the refinement of the asymptotics of $A_\alpha$ at null infinity ~\eqref{eq:K_ex}, ~\eqref{eq:K_in} from Proposition ~\ref{prop:refined_asymptotics}. In ~\cite{CKL19}, the decompositions $A_\alpha=A^1_\alpha+\BFq/(4\pi r)\delta_{\alpha 0}\chi_{ex}(q)$ and $A^1_\alpha=A^2_\alpha+A^0_\alpha$ where $A_\alpha^0$ solves the homogeneous wave equation with the same data as $A_\alpha^1$, and $A_\alpha^2$ solves the inhomogeneous wave equation $\Box A_\alpha^2=-J_\alpha$ with vanishing data, were introduced. First, since $A_\alpha^0$ is a solution to a linear homogeneous wave equation, $A_\alpha^0$ has a radiation filed $A_\alpha^0\sim r^{-1}\mathcal{A}_\alpha^0(q, \omega)$. Next, we consider an intermediate approximation $A_\alpha^{as}$ of $A_\alpha^2$ satisfying $\Box A_\alpha^{as}=-J_\alpha^\infty:=-r^{-2}\mathcal{J}_\alpha(q, \omega)\chi(\jb{q}/r)$ where $\mathcal{J}_\alpha(q, \omega)=L_\alpha(\omega)\Im(\Phi(q, \omega)\overline{\partial_q\Phi(q, \omega)})$. In view of the decay of $J_\alpha-J_\alpha^\infty$ and Lemma ~\ref{lem:existence_of_radiationfield}, we see that $A_\alpha^2-A_\alpha^{as}$ has a radiation field $(A_\alpha^2-A_\alpha^{as})\sim r^{-1}\mathcal{A}_\alpha^{as}(q, \omega)$ (see details in \cite[Theorem 9.1]{CKL19}). Finally, applying Proposition ~\ref{prop:refined_asymptotics} yields the asymptotics of $A_\alpha^{as}$. Putting everything together, we find the refined asymptotics of $A_\alpha$ at null infinity.
\end{rem}
First we establish a technical lemma on which the proof of Proposition ~\ref{prop:refined_asymptotics} relies.
\begin{lem}\label{lem:taylor_expansion_in_n}
Suppose $n(q, \omega)$ is a smooth function satisfying 
\begin{equation}\label{eq:condition_on_norm}
\norm{n}_{N, \infty, 1+b}\lesssim \ep^2 \quad \text{for} \quad b>0, \quad 0<\ep\ll 1, \quad N\geq 1.
\end{equation}
Set
\[
\varphi_k(t, r\omega)=\int_{-\infty}^{\infty}\!\int_{\BS^2}\!\frac{n(\eta, \sigma)-n(\eta, \omega)}{\left(t-r\angles{\omega}{\sigma}\right)^k}\, dS(\sigma)\,d\eta.
\]
Then for $\abs{I}\leq N-1$, in the region $\{(t, x)\mid t-r\geq 1/2, \,t<cr\}$ with $c>1$ we have
\begin{equation}\label{eq:estimate_of_phik}
\begin{split}
\abs{Z^I\varphi_k}&\lesssim_c
\ep^2\jb{t+r}^{-k}\Bigl(\ln\frac{\jb{t+r}}{\jb{t-r}}\Bigr)^{k-1} \quad\text{if} \quad k=1, 2,\\
\abs{Z^I\varphi_k}&\lesssim_c\ep^2\jb{t+r}^{-2}\jb{t-r}^{2-k}\quad\text{if}\quad k\geq 3.
\end{split}
\end{equation}
	\end{lem}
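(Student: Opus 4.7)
The plan is to establish a sharp pointwise bound on $\varphi_k$ by exploiting spherical symmetry of the denominator, and then to show that each commutator vector field produces an expression of the same form. First I parameterize the sphere with $\omega$ as the north pole, writing $\sigma = \cos\theta\,\omega + \sin\theta\,\tau$ with $\tau$ a unit vector perpendicular to $\omega$. The crucial observation is that $t - r\langle\omega,\sigma\rangle = t - r\cos\theta$ is independent of the azimuthal angle parameterizing $\tau$, so the $\tau$-integral only sees the numerator. Taylor expanding $n(\eta,\cdot)$ on the sphere and averaging over $\tau$, the linear term in $\sin\theta\,\tau$ cancels, leaving
\[
\Bigl|\frac{1}{2\pi}\int_0^{2\pi}\!\bigl(n(\eta,\sigma) - n(\eta,\omega)\bigr)\,d\phi\Bigr| \lesssim \epsilon^2\,\theta^2\,\jb{\eta}^{-1-b},
\]
using two angular derivatives of $n$, which are controlled by $\norm{n}_{N,\infty,1+b}\lesssim\epsilon^2$. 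Integrating in $\eta$ (where $\jb{\eta}^{-1-b}$ is integrable) gives the pointwise estimate
\[
|\varphi_k(t,r\omega)| \lesssim \epsilon^2\int_0^\pi \frac{\theta^2\sin\theta}{(t-r\cos\theta)^k}\,d\theta.
\]
Splitting the $\theta$-integral into the three regimes $[0,\theta_\star]$, $[\theta_\star,1]$, $[1,\pi]$ with $\theta_\star = \sqrt{(t-r)/r}$ (on which $(t-r\cos\theta)$ is comparable to $(t-r)$, $r\theta^2$, and $(t+r)$ respectively) yields the stated bounds: for $k\leq 2$ the middle regime produces the $\ln(\jb{t+r}/\jb{t-r})$ factor, while for $k\geq 3$ the endpoint $\theta_\star$ dominates and produces $\jb{t-r}^{2-k}\jb{t+r}^{-2}$.

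Next I would turn to $Z^I\varphi_k$ by commuting each vector field inside the integral. Several fields act transparently: direct calculation gives $\partial_t\varphi_k = -k\,\varphi_{k+1}$ and, since $\varphi_k$ is homogeneous of degree $-k$ in $(t,x)$, one has $S\varphi_k = -k\,\varphi_k$. The rotation $\Omega_{ij}$ can be moved into the $\sigma$ variable by a change of variables on the sphere, converting $\Omega_{ij}\varphi_k$ into the same expression with $n$ replaced by $\Omega_{ij}^\omega n$, which still satisfies the required bound with $N$ lowered by one. The remaining fields $\partial_i$ and $\Omega_{0i}$ produce two kinds of new terms: (i) integrals with $(t-r\langle\omega,\sigma\rangle)^{-k-1}$ carrying a factor of $\omega_i$ (which reproduces $\varphi_{k+1}$) or $(\sigma_i - \omega_i)$ (which combines with $n(\eta,\sigma)-n(\eta,\omega)$ to preserve the $\theta^2$ improvement after azimuthal averaging); and (ii) ``pure-derivative'' terms where the derivative falls on $n(\eta,\omega)$, producing $r^{-1}(\delta_{ij}-\omega_i\omega_j)\,\partial_{\omega_j}n(\eta,\omega)$ times a spherical integral of $(t-r\cos\theta)^{-k}$.

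Iterating this procedure expresses $Z^I\varphi_k$ as a finite sum of integrals of the above two types, with angular derivatives $Z^J n$ of $n$ (with $|J|\leq|I|$) in place of $n$ and with $k$ replaced by some $k'\leq k+|I|$. Each type-(i) piece is handled by the pointwise estimate of the first paragraph applied to the corresponding $\varphi_{k'}$. For each type-(ii) piece, I would compute the spherical integral directly using $\int_{\BS^2}(t-r\cos\theta)^{-k}\,dS(\sigma)\lesssim r^{-1}\jb{t-r}^{1-k}$ for $k\geq 2$ and $r^{-1}\ln(\jb{t+r}/\jb{t-r})$ for $k=1$, and combine with the $r^{-1}$ prefactor from $\partial_i\omega$; in the region $t\leq cr$ where $r\sim t+r$, the resulting bound is dominated by the claimed one. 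The main obstacle is precisely this last step: the pure-derivative terms lose the $\theta^2$ cancellation and must be controlled using the extra $1/r$ from $\partial_i\omega$, so the bookkeeping of how many $1/r$ factors accompany each such term after $|I|$ iterations — and verifying that they are enough to absorb the worst-case spherical integral uniformly over $\{t-r\geq 1/2,\,t<cr\}$ — is the delicate part of the argument.
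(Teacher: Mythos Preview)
Your $|I|=0$ argument is correct and essentially the paper's: the paper rotates to put $\omega$ at $(1,0,0)$ and computes $\int_{-1}^{1}(1-\sigma_1)\,(1-(r/t)\sigma_1)^{-k}\,d\sigma_1$ directly rather than splitting the $\theta$-range, but the mechanism (azimuthal cancellation of the linear term, leaving a $\theta^2$-weighted integral) is identical. Your handling of $S$, $\partial_t$ and $\Omega_{ij}$ is also fine.

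The gap is in the boost $\Omega_{0i}=t\partial_i+x_i\partial_t$. Applying $t\partial_i$ puts a factor of $t$ in front of both your $(\sigma_i-\omega_i)$ type-(i) term and your type-(ii) term, and neither piece \emph{by itself} meets the claimed bound. At $k=1$ the $(\sigma_i-\omega_i)$ term has the $\theta^2$ structure of $\varphi_2$, hence size $\jb{t+r}^{-2}\ln(\jb{t+r}/\jb{t-r})$; multiplied by $t$ this is $\jb{t+r}^{-1}\ln(\cdots)$, one logarithm too large. The type-(ii) term $t\cdot r^{-1}\partial_{\omega}n\cdot\int_{\BS^2}(t-r\cos\theta)^{-1}\,dS$ is the same size --- the ``extra $1/r$'' you rely on is exactly cancelled by the $t$ from the boost. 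For $k\ge 2$ the discrepancy is worse than logarithmic: e.g.\ at $k=2$ each piece is $\sim\jb{t+r}^{-1}\jb{t-r}^{-1}$, not $\jb{t+r}^{-2}\ln(\cdots)$. These two contributions do cancel at leading order, but your term-by-term scheme does not see that cancellation, and the ``bookkeeping of $1/r$ factors'' you flag as delicate in fact fails as stated.

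The paper resolves this by writing $\Omega_{0i}=\omega_i(t\partial_r+r\partial_t)+(t/r)\,\omega^j\Omega_{ji}$ and exploiting two facts. First, in the rotated-frame representation all $\omega$-dependence sits in the numerator, so $\Omega_{ji}\varphi_k$ is again a $\varphi_k$-integral with $n$ replaced by an angular derivative of $n$ (same $k$, same $\theta^2$ cancellation, no loss); since $t/r\lesssim_c 1$ this handles the tangential part. Second, one has the exact identity
\[
(t\partial_r+r\partial_t)\varphi_l \;=\; -\frac{lt}{r}\,\varphi_l \;+\; \frac{l(t^2-r^2)}{r}\,\varphi_{l+1},
\]
whose coefficients $t/r$ and $(t-r)(t+r)/r\sim(t-r)$ are bounded in $\{t<cr\}$; the $(t-r)$ gain precisely compensates the weaker bound on $\varphi_{l+1}$. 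This closed algebra --- each $Z$ maps $\varphi_l$ to a bounded combination of $\varphi_l$ and $\varphi_{l+1}$ (possibly with $n$ differentiated) --- is the structural point your sketch is missing.
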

\begin{proof}
Rewriting $\omega=O(1,0,0)$ where $O^TO=I_3$ and making an angular change of variables $\sigma\to O\sigma$ give
\begin{equation}\label{eq:rewrite}
t^k\varphi_1(t, r\omega)=\int_{-\infty}^\infty\! \int_{\BS^2}\!\frac{n(\eta, O\sigma)-n(\eta,O(1,0,0))}{\left(1-(r/t)\sigma_1\right)^k}\,dS(\sigma)\,d\eta.
\end{equation}	
For any smooth function $m(q, \sigma)$ satisfying ~\eqref{eq:condition_on_norm}, we can consider the Taylor expansion of $m$ in a neighborhood of $(1, 0,0)$ with respect to $(\sigma_2, \sigma_3)$, namely, 
\[
m(q, \sigma)=m(q, (1,0,0))+m_2(q)\sigma_2+m_3(q)\sigma_3+\mathcal{O}(\sigma_2^{2}+\sigma_3^{2})
\]
where $m_2(q)$ and $m_3(q)$ can be expressed in terms of the angular derivatives of $m$ evaluated at $(1, 0, 0)$. Here we can relate the coordinates $(\sigma_2, \sigma_3)$
 to the polar coordinates $(\theta, \phi)$ as well as $(\sigma_1, \phi)$ as follows
\[\sigma_2=\sin\theta\cos\phi=\sqrt{1-\sigma_1^2}\cos\phi, \quad \sigma_3=\sin\theta\sin\phi=\sqrt{1-\sigma_1^2}\sin\phi.\]
Therefore, on a neighborhood $\mathcal{U}$ of $(1,0,0)$, say $\mathcal{U}=\{(\sigma_1, \phi)\mid -1\leq d\leq\sigma_1\leq 1, 0\leq\phi<2\pi\}$, we obtain that 
\begin{align*}
&\int_{-\infty}^\infty\! \int_{\mathcal{U}}\!\frac{n(\eta, O\sigma)-n(\eta,O(1,0,0))}{\left(1-(r/t)\sigma_1\right)^k}\,dS(\sigma)\,d\eta\\
&\qquad=\int_c^1\!\int_0^{2\pi}\!\frac{\sqrt{1-\sigma_1^2}\left(n_2(\omega)\cos\phi+n_3(\omega)\sin\phi\right)+\mathcal{O}_{\ep^2}(\sigma_2^2+\sigma_3^2)}{\left(1-(r/t)\sigma_1\right)^k}\,d\phi\,d\sigma_1\\
&\qquad\lesssim_{\ep^2}\int_{-1}^1\!\frac{(1-\sigma_1)}{\left(1-(r/t)\sigma_1\right)^k}\,d\sigma_1
%&\qquad\lesssim_{c, \ep^2}
%\begin{cases}
%1, \quad &k=1,\\
%ln\frac{t+r}{t-r},\quad &k=2,\\
%(t+r)^{2-k}(t-r)^{2-k}, \quad &k\geq 3.
%\end{cases}
\end{align*}
We see that the above integral is $\mathcal{O}_c(1)$ when $k=1$, $\mathcal{O}_c(\ln\frac{t+r}{t-r})$ when $k=2$ and $\mathcal{O}_c((t+r)^{k-2}(t-r)^{2-k})$ when $k\geq 3$.
Here $n_2(\omega)$ and $n_3(\omega)$ can be written in terms of the angular derivatives of $n$ and of size $\mathcal{O}_{\ep^2}(1)$. It remains to control the integral over the region $\BS^2 \setminus\mathcal{U}$ and this is easy since $1-(r/t)\sigma_1\lesssim1$ in that region. This proves ~\eqref{eq:estimate_of_phik} for $\abs{I}=0$. Now let us turn to the case $\abs{I}>1$. If $Z=\Omega_{ij}$, by ~\eqref{eq:rewrite}, we see that any angular derivative on $\omega$ falls on $n$ which we can control as previously by assumption ~\eqref{eq:condition_on_norm}. If $Z=S$, we note that $Sf(t, r)=\partial_af(at, ar)\big|_{a=1}$. By a change of variables $n(\eta, \omega)\to n(a\eta, \omega)$ we see that $S\varphi$ corresponds to replacing $n$ by $q\partial_q n$. If $Z=\partial_t$ or $\partial_i=\omega_i\partial_r+\slashed{\partial}_i$, obviously, it holds true. Finally, it remains to consider the case $Z=\Omega_{0i}=\omega^i(t\partial_r+r\partial_t)+t\slashed{\partial}_i$ and it suffices to calculate the $\omega_i(t\partial_r+r\partial_t)$ component. We observe that for any $l\in\BZ_+$
\begin{align*}
\omega_i(t\partial_r+r\partial_t)\varphi_l&=\omega_il\int_{-\infty}^{\infty}\!\int_{\BS^2}\!\frac{\bigl(t\angles{\omega}{\sigma}-r\bigr)\bigl(n(\eta, \sigma)-n(\eta, \omega)\bigl)}{\left(t-r\angles{\omega}{\sigma}\right)^{l+1}}\, dS(\sigma)\,d\eta\\
&=-\omega_il\frac{t}{r}\int_{-\infty}^{\infty}\!\int_{\BS^2}\!\frac{n(\eta, \sigma)-n(\eta, \omega)}{\left(t-r\angles{\omega}{\sigma}\right)^l}\, dS(\sigma)\,d\eta+\omega_il\frac{t^2-r^2}{r}\int_{-\infty}^{\infty}\!\int_{\BS^2}\!\frac{n(\eta, \sigma)-n(\eta, \omega)}{\left(t-r\angles{\omega}{\sigma}\right)^{l+1}}\, dS(\sigma)\,d\eta.
\end{align*}
This ends the proof of Lemma ~\eqref{lem:taylor_expansion_in_n}.
\end{proof}
Next, we record two lemmas which are needed in the course of the proof of ~\eqref{prop:refined_asymptotics}.
\begin{lem}[{\cite[Lemma 18]{Lind17}}]\label{lem:existence_of_radiationfield}
	Suppose that for some $0\leq\delta<1-\gamma$ and $0\leq\gamma'\leq\gamma$
	\begin{align*}
	\abs{\Box \Omega_{ij}^IS^J\partial_t^K\varphi}&\lesssim\frac{\ep}{\jb{t+r}^{3-\delta}\jb{r-t}^\delta\jb{(r-t)_+}^\gamma\jb{(r-t)_-}^{\gamma'}}\\
	\abs{\Delta_\omega\Omega_{ij}^IS^J\partial_t^K\varphi}&\lesssim\frac{\ep}{\jb{t+r}^{1-\delta}\jb{r-t}^\delta\jb{(r-t)_+}^\gamma\jb{(r-t)_-}^{\gamma'}}
	\end{align*}
for $\abs{I}+\abs{J}+\abs{K}\leq N$ and $r>t/4$, and
\[
\abs{(\partial_t+\partial_r)(\Omega_{ij}^IS^J\partial_t^Kr\varphi)}
\lesssim\ep(1+r)^{-1-\gamma}, \quad \text{when}\quad t=0.
\]
Then the limit 
\[
\varPhi^\infty(q, \omega)=\lim_{r\to\infty}\varPhi(q, \omega, r), \quad \varPhi(q, \omega, r)=r\varphi(t, r\omega),
\]
exists and satisfies, for $r>t/4$ and $\abs{I}+\abs{J}+\abs{K}\leq N$, 
\begin{equation}
\abs{\Omega_{ij}^I(q\partial_q)^J\partial_q^K\varPhi^\infty(q, \omega)}\lesssim\ep^2\jb{q_-}^{\gamma-\gamma'}\jb{q}^{-\gamma}.
\end{equation}
	\end{lem}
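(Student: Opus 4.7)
The strategy is to apply Lemma \ref{lem:existence_of_radiationfield} to $\varPhi_0[n]=\varPhi[n]-\varPhi_1[n]$. The initial data hypothesis at $t=0$ is trivial since $\varPhi[n]$ vanishes there by construction, and $\varPhi_1[n]$ vanishes because $\chi_{ex}(0)=0$, $\chi_{in}(r)=0$ for all $r\geq 0$, and $\chi(\sqrt{1+r^2}/r)=0$ for all $r\geq 0$. It therefore remains to verify
\[
\abs{\Box Z^I\varPhi_0[n]}\lesssim\frac{\ep^2}{\jb{t+r}^{3-\delta}\jb{r-t}^{\delta}\jb{(r-t)_+}^a\jb{(r-t)_-}^{\gamma'}}
\]
for $\abs{I}\leq N-5$ and $Z\in\{\Omega_{ij},S,\partial_t\}$, together with the analogous bound on $\Delta_\omega\varPhi_0[n]$, where $\gamma'=a$ when $0<a<1$ and $\gamma'=\mu$ when $1\leq a<2$. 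The conclusion \eqref{eq:decay} then follows from Lemma \ref{lem:existence_of_radiationfield} with $\gamma=a$, since $\jb{q_-}^{\gamma-\gamma'}\jb{q}^{-\gamma}$ rewrites exactly as $\jb{q_+}^{-a}\jb{q_-}^{-\mu}$ in the interior/exterior split.

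The central calculation is $\Box\varPhi_1[n]$, split across the three defining terms. The Li\'enard--Wiechert piece $(4\pi)^{-1}\chi_{ex}(t/12)\chi_{in}(r-t)\int\int n(\eta,\sigma)/(t-r\langle\omega,\sigma\rangle)\,dS(\sigma)\,d\eta$ has the property that the kernel $1/(t-r\langle\omega,\sigma\rangle)$ is a plane-wave solution of $\Box u=0$ on $\{t>r\langle\omega,\sigma\rangle\}$, so $\Box$ passes inside the integral and yields zero; the only nontrivial contributions come from derivatives hitting the cutoffs $\chi_{ex}(t/12)$ (compactly supported in $t\in[6,12]$) and $\chi_{in}(r-t)$ (supported in the bounded-$q$ strip $[-1,-1/2]$). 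For the log pieces, a direct computation using $\Box(f/r)=(4/r)\partial_p\partial_q f+r^{-3}\Delta_\omega f$ in null coordinates $p=t+r$, $q=r-t$ (with $\partial_p\ln(2r/\jb{t-r})=1/(2r)$ and $\partial_q\int_q^\infty n\,d\eta=-n$) shows each log piece produces a leading term $\mp r^{-2}n(q,\omega)$, which with the signs and cutoffs built into $\varPhi_1[n]$ cancels $-F[n]=-r^{-2}n(q,\omega)\chi(\jb{q}/r)$ throughout the wave zone, modulo $O(\ep^2r^{-3}\ln r)$ corrections from derivatives of $\jb{q}$ and $\chi(\jb{q}/r)$.

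The residual source is then estimated in the exterior ($q\geq-1/2$) and interior ($q\leq -1$) separately, with cutoff-derivative errors supported in $q\in[-1,-1/2]$. In the exterior, only the $r^{-3}$-type corrections from the exterior log piece survive, bounded by $\ep^2\jb{t+r}^{-3+\delta}\jb{r-t}^{-\delta-a}$, which delivers $\gamma'=a$. In the interior, the matching between the Li\'enard--Wiechert spherical-mean identity $(4\pi)^{-1}\int_{\BS^2}dS(\sigma)/(t-r\langle\omega,\sigma\rangle)=(2r)^{-1}\ln((t+r)/(t-r))$ and the explicit log $(2r)^{-1}\ln(2r/\jb{t-r})$ subtracted in $\varPhi_1[n]$ leaves a discrepancy of $O(\jb{q}\jb{p}^{-1})$, while the angular Taylor remainder $n(\eta,\sigma)-n(\eta,\omega)$ is controlled by Lemma \ref{lem:taylor_expansion_in_n}; together these produce the $\jb{q_-}^{-\mu}$ factor for any $\mu<1$. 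Commutators with $Z$ reduce to the same analysis via \eqref{eq:com_iden} (the identities $[\Box,\Omega_{ij}]=[\Box,\partial_t]=0$ and $[\Box,S]=2\Box$ mean each application of $Z$ replaces $n$ by $\partial_\omega^\beta(q\partial_q)^kn$, controlled by $\norm{n}_{N,\infty,1+a}$ with loss of at most five derivatives); the $\Delta_\omega\varPhi_0[n]$ bound is analogous. The main obstacle is the careful matching of logarithmic formulas across the transition strip $q\in[-1,-1/2]$: the cutoff derivatives $\chi_{in}'$ generate bounded but potentially delicate sources there, yet because they are compactly supported in $q$ they are absorbable into $\jb{q_-}^{-\mu}$ for any $\mu<1$, which is precisely why the second case of \eqref{eq:decay} permits arbitrary $\mu\in(0,1)$ rather than a sharp $\mu=a-1$.
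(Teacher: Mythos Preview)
Your proposal addresses Proposition~\ref{prop:refined_asymptotics}, not the stated Lemma~\ref{lem:existence_of_radiationfield}. The lemma itself is simply quoted from \cite[Lemma~18]{Lind17} in the paper and carries no proof there; it is a black-box input to the proof of the proposition.

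Reading your write-up as a proof of Proposition~\ref{prop:refined_asymptotics} instead, the overall structure matches the paper's (compute $\Box\varPhi_1[n]$, show it cancels $-F[n]$ up to remainders, invoke Lemma~\ref{lem:taylor_expansion_in_n} for the angular error), but there is a genuine gap in the range $1\leq a<2$. You propose to read off the decay $\jb{q_+}^{-a}\jb{q_-}^{-\mu}$ directly from Lemma~\ref{lem:existence_of_radiationfield} by taking $\gamma=a$ and $\gamma'=\mu$; however, the lemma's hypothesis $0\leq\delta<1-\gamma$ forces $\gamma<1$, so the lemma can never produce exterior decay $\jb{q_+}^{-a}$ with $a\geq 1$. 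The paper does not rely on the lemma for the quantitative bound \eqref{eq:decay}: it first applies the radial estimate Lemma~\ref{lem:radial_estimate} to the source bound
\[
\abs{\Box Z^J\varPhi_0[n]}\lesssim\frac{\ep^2}{\jb{t+r}^{3}\jb{t-r}^{a}}\ln\frac{\jb{t+r}}{\jb{t-r}}
\]
to obtain the pointwise decay \eqref{eq:condition2} for $\varPhi_0[n]$ itself, and \eqref{eq:decay} is then read off from that; Lemma~\ref{lem:existence_of_radiationfield} is invoked only to guarantee existence of the limit $\Psi_0^\infty$. This is also why an arbitrary $\mu<1$ appears in the second line of \eqref{eq:decay} rather than something tied to $a$: it comes from the third case of Lemma~\ref{lem:radial_estimate}, not from cutoff errors in the transition strip as you suggest.
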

\begin{lem}[{\cite[Lemma 10.1]{CKL19}}]\label{lem:radial_estimate}
	If $\Box \varphi=-F$ with vanishing initial data where
	\[
	\abs{F}\leq\frac{C}{(1+r)(1+t+r)(1+\abs{r-t})^{1+\delta}}, \quad \delta>0,
	\]
	then we have
	\begin{equation}
	\abs{\varphi}\leq\frac{CS^0(t,r)}{(1+t+r)(1+q_+)^\delta} \quad \text{where} \quad S^0(t, r)=\frac{t}{r}\ln\frac{\jb{t+r}}{\jb{r-t}}.
	\end{equation}
	On the other hand suppose that for some $\mu>0$
	\[
	\abs{F}\leq\frac{C}{(1+r)(1+t+r)^{1+\mu}(1+\abs{r-t})^{1-\mu}(1+q_+)^{\delta_+}(1+q_-)^{\delta_-}} 
	\]
	Then if $0<\delta_+<\mu$, $0\leq\delta_-\leq\delta_+$ we have
	\begin{equation}
	\abs{\varphi}\leq\frac{C}{(1+t+r)(1+q_+)^{\delta_+}(1+q_-)^{\delta_-}}
	\end{equation}
	Finally, if $0<\mu<\delta_-<\delta_+$, then 
	\begin{equation}
	\abs{\varphi}\leq\frac{C}{(1+t+r)(1+\abs{q})^\mu(1+q_+)^{\delta_+-\mu}}.
	\end{equation}
\end{lem}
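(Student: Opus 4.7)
The plan is to use the Kirchhoff representation formula for solutions to $\Box\varphi = -F$ with vanishing data in three spatial dimensions. Specifically, the spherical-mean representation yields the pointwise bound
\[
|\varphi(t, r\omega)| \lesssim \frac{1}{r}\int_0^t\!\int_{|r-(t-s)|}^{r+(t-s)} \rho \sup_{\sigma \in \mathbb{S}^2}|F(s, \rho\sigma)|\, d\rho\, ds,
\]
which reduces the lemma to estimating a two-dimensional integral of the given bound on $|F|$ over a region determined by the backward null cone from $(t, r\omega)$.

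The natural next move is to pass to null coordinates $p' = s+\rho$, $q' = \rho - s$ for the source point, under which $ds\, d\rho = \tfrac{1}{2}\, dp'\, dq'$ and $\rho = (p'+q')/2$. The integration region transforms into (roughly) $\{-t-r \le q' \le q,\ |q'| \le p' \le t+r\}$, and all the hypothesized bounds on $|F|$ factor through $p'$ and $|q'|$ alone. For the first bound, plugging in $|F| \lesssim (1+\rho)^{-1}(1+p')^{-1}(1+|q'|)^{-1-\delta}$ gives $\rho |F| \lesssim (1+p')^{-1}(1+|q'|)^{-1-\delta}$. Integrating in $p'$ over $[|q'|, t+r]$ produces the logarithmic factor $\ln(\langle t+r\rangle/\langle q'\rangle)$, and then integrating in $q'$ against $(1+|q'|)^{-1-\delta}$ over $[-t-r, q]$ yields, after careful bookkeeping, a leading contribution of size $\langle q_+\rangle^{-\delta}\ln(\langle t+r\rangle/\langle r-t\rangle)$ divided by $r$; combining with the explicit prefactor gives the stated $S^0(t,r)/(1+t+r)$ bound.

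For the second and third bounds the strategy is identical: in null coordinates
\[
\rho |F| \lesssim \frac{1}{(1+p')^{1+\mu}(1+|q'|)^{1-\mu}(1+q'_+)^{\delta_+}(1+q'_-)^{\delta_-}},
\]
so the additional $(1+p')^{-\mu}$ decay now allows the $p'$-integral to converge without a logarithm, giving a factor $\sim (1+|q'|)^{-\mu}$. The remaining $q'$-integral is governed by the product $(1+|q'|)^{1-\mu}(1+q'_+)^{\delta_+}(1+q'_-)^{\delta_-}$, and the dichotomy $0 < \delta_+ < \mu$ versus $0 < \mu < \delta_- < \delta_+$ is precisely the dichotomy of whether the $q'_+$/$q'_-$ weights or the $(1+|q'|)^{1-\mu}$ weight dominates integrability near the light cone $q' = 0$; in the former regime one recovers $(1+q_+)^{-\delta_+}(1+q_-)^{-\delta_-}$, while in the latter one loses a power of $(1+|q|)^{-\mu}$ and picks up an extra $(1+q_+)^{-(\delta_+-\mu)}$ from the exterior tail of the $q'$-integral.

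The main obstacle is the bookkeeping: one must split the analysis into the regimes $r > t$ (exterior), $r < t$ with $r$ comparable to $t$, and $r \ll t$ (deep interior), and in each regime verify that the $(p', q')$ integral produces the stated decay with the correct powers of $\langle q_+\rangle$ and $\langle q_-\rangle$. In particular, extracting the precise factor $S^0(t,r) = (t/r)\ln(\langle t+r\rangle/\langle r-t\rangle)$ in the first estimate -- rather than a cruder uniform logarithm -- requires distinguishing the contribution of the region $|q'| \le r$ from the region $r \le |q'|$ and using the explicit length of the $p'$ integration interval in each case. Once this case analysis is set up correctly, each individual estimate reduces to an elementary one-variable integral, and the three conclusions follow in parallel.
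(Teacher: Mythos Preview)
The paper does not prove this lemma at all: it is quoted verbatim as \cite[Lemma 10.1]{CKL19} and used as a black box, so there is no proof in the paper to compare against. Your outline via the Kirchhoff representation and the change to null coordinates $(p',q')$ is the standard route to such estimates and is the approach taken in the cited reference; as a sketch it is correct, with the caveat that the case analysis you flag (exterior, near-cone interior, deep interior) and the handling of the $r\ll t$ regime where the prefactor $t/r$ in $S^0(t,r)$ appears do require genuine care but no new ideas.
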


\medskip

\begin{proof}[Proof of Proposition ~\ref{prop:refined_asymptotics}]
We compute that
	\begin{align*}
	\Box\varPhi_1[n]&=\Box \frac{1}{4\pi}\int_{-\infty}^{\infty}\!\int_{\BS^2}\!\frac{n(\eta, \sigma)}{t-r\angles{\omega}{\sigma}}\, dS(\sigma)\,d\eta\chi_{ex}(\frac{t}{12})\chi_{in}(q)\\
	&\qquad-\Box\frac{1}{2r}\ln\frac{2r}{\jb{t-r}}\int_{-\infty}^q\!n(\eta, \omega)\, d\eta\chi\Bigl(\frac{\jb{q}}{r}\Bigr)\chi_{in}(q)+\frac{1}{2r}\ln\frac{2r}{\jb{t-r}}\int_{q}^\infty\!n(\eta, \omega)\, d\eta\chi\Bigl(\frac{\jb{q}}{r}\Bigr)\Bigl(\chi_{in}(q)-1\Bigr)\\
	&\equiv\Romanupper{1}-\Romanupper{2}.
	\end{align*}
For the term $\Romanupper{1}$ we find that 
\begin{align*}
\Romanupper{1}&=\chi_{ex}(\frac{t}{12})\chi_{in}(q)\Box \frac{1}{4\pi}\int_{-\infty}^\infty \!\int_{\BS^2}\!\frac{n(\eta,\sigma)}{t-\angles{x}{\sigma}}\,dS(\sigma)\,d\eta+\frac{1}{4\pi}\int_{-\infty}^\infty\! \int_{\BS^2}\!\frac{n(\eta,\sigma)}{t-\angles{x}{\sigma}}\,dS(\sigma)\,d\eta\Box\bigl(\chi_{ex}(\frac{t}{12})\chi_{in}(q)\bigr)\\
&\qquad+2\partial^\mu\frac{1}{4\pi}\int_{-\infty}^\infty\ \int_{\BS^2}\!\frac{n(\eta,\sigma)}{t-\angles{x}{\sigma}}\!dS(\sigma)\,d\eta\partial_\mu\bigl(\chi_{ex}(\frac{t}{12})\chi_{in}(q)\bigr)\\
&=\Romanupper{1}^{(1)}+\Romanupper{1}^{(2)}+\Romanupper{1}^{(3)}.
\end{align*}
And $\Romanupper{1}^{(1)}=0$,
\begin{align*}
 \Romanupper{1}^{(2)}&=\frac{1}{4\pi}\int_{-\infty}^\infty\int_{\BS^2}\!\frac{n(\eta,\sigma)}{t-\angles{x}{\sigma}}\,dS(\sigma)\,d\eta\bigl(\frac{2}{r}\chi_{in}'(q)\chi_{ex}(\frac{t}{12})+\frac{1}{6}\chi'_{in}(q)\chi'_{ex}(\frac{t}{12})-\frac{1}{144}\chi_{in}(q)\chi''_{ex}(\frac{t}{12})\bigr),\\
\Romanupper{1}^{(3)}&=\frac{1}{2\pi}\int_{-\infty}^\infty\ \int_{\BS^2}\!\frac{n(\eta,\sigma)}{\left(t-\angles{x}{\sigma}\right)^2}\Bigl(\bigl(\angles{\omega}{\sigma}-1\bigr)\chi_{in}'(q)\chi_{ex}(\frac{t}{12})+\frac{1}{12}\chi_{in}(q)\chi'_{ex}(\frac{t}{12})\Bigr)\,dS(\sigma)\,d\eta.
\end{align*} Then we conclude that 
\begin{align*}
\Romanupper{1}
%&=\frac{1}{2\pi}\frac{t-r}{t^2r}\int_{-\infty}^\infty\! \int_{\BS^2}\!\frac{n(\eta,\sigma)}{\left(1-(r/t)\angles{\omega}{\sigma}\right)^2}\,dS(\sigma)\,d\eta\chi_{in}'(q)\chi_{ex}(\frac{t}{12})+\mathcal{R}_{\Romanupper{1}}\\
&=\frac{1}{2\pi}\frac{t-r}{t^2r}\int_{-\infty}^\infty\! \int_{\BS^2}\!\frac{n(\eta,\omega)}{\left(1-(r/t)\angles{\omega}{\sigma}\right)^2}\,dS(\sigma)\,d\eta\chi_{in}'(q)\chi_{ex}(\frac{t}{12})\\
&\qquad+\frac{1}{2\pi}\frac{t-r}{t^2r}\int_{-\infty}^\infty\! \int_{\BS^2}\!\frac{n(\eta, \sigma)-n(\eta,\omega)}{\left(1-(r/t)\angles{\omega}{\sigma}\right)^2}\,dS(\sigma)\,d\eta\chi_{in}'(q)\chi_{ex}(\frac{t}{12})+\mathcal{R}_{\Romanupper{1}}\\
&\equiv\Romanupper{1}_a+\Romanupper{1}_b+\mathcal{R}_{\Romanupper{1}}.
\end{align*}
We note that the cutoff $\chi_{ex}$ is used to remove the singularity at $r=0$ generated by differentiating $\chi_{in}$ and we can control 
\begin{equation}\label{eq:R_I}
\abs{Z^J\mathcal{R}_{\Romanupper{1}}}\lesssim\frac{\ep^2}{\jb{t+r}^m}\quad \text{for any}\quad m.
\end{equation}
It is clear that the term $\Romanupper{1}_a$ contributes the leading order behavior of the term $\Romanupper{1}$. In view of the fact that $\chi(\jb{q}/r)=1$ in the region $\text{supp }\chi_{in}'(q)\cap\text{supp }\chi_{ex}(t/12)$, we have 
\begin{align*}
\Romanupper{1}_a&=\frac{t-r}{t^2r}\int_{-\infty}^\infty\!n(\eta,\omega)\,d\eta\int_{-1}^1 \frac{1}{\left(1-(r/t)\sigma_1\right)^2}\,d\sigma_1\chi_{in}'(q)\chi_{ex}(\frac{t}{12})\\
&=\frac{2}{r(t+r)}\int_{-\infty}^\infty\!n(\eta,\omega)\,d\eta\chi_{in}'(q)\chi_{ex}(\frac{t}{12})\chi(\frac{\jb{q}}{r}).
\end{align*}
By the estimate ~\eqref{eq:estimate_of_phik} from Lemma ~\ref{lem:taylor_expansion_in_n}, it follows that
\begin{equation}\label{eq:I_b}
\abs{Z^J\Romanupper{1}_b}\lesssim\frac{\ep^2}{\jb{t+r}^3\jb{t-r}^m}\ln\frac{\jb{t+r}}{\jb{t-r}}\quad \text{for any}\quad m.
\end{equation}
Next, let us calculate the term $\Romanupper{2}$. Since $\Box \varphi=\frac{1}{r}(\partial_r-\partial_t)(\partial_r+\partial_t)(r\varphi)+\frac{1}{r^2}\Delta_\omega\varphi$, we see that
\begin{align*}
\Romanupper{2}&=\frac{1}{r}(\partial_r-\partial_t)(\partial_r+\partial_t)\Bigl(\frac{1}{2}\ln\frac{2r}{\jb{t-r}}\int_{-\infty}^{q}\!n(\eta, \omega)\,d\eta\chi\Bigl(\frac{\jb{q}}{r}\Bigr)\chi_{in}(q)\Bigr)\\
&\qquad+\frac{1}{r}(\partial_r-\partial_t)(\partial_r+\partial_t)\Bigl(\frac{1}{2}\ln\frac{2r}{\jb{t-r}}\int_{q}^{\infty}\!n(\eta, \omega)\,d\eta\chi\Bigl(\frac{\jb{q}}{r}\Bigr)\bigl(\chi_{in}(q)-1\bigr)\Bigr)\\
&\qquad+\frac{1}{2r^3}\ln\frac{2r}{\jb{t-r}}\chi\Bigl(\frac{\jb{q}}{r}\Bigr)\biggl(\int_{-\infty}^{q}\!\Delta_\omega n(\eta, \omega)\,d\eta\chi\Bigl(\frac{\jb{q}}{r}\Bigr)\chi_{in}(q)+\int_{q}^{\infty}\!\Delta_\omega n(\eta, \omega)\,d\eta\bigl(\chi_{in}(q)-1\bigr)\biggr)\\
&\equiv\Romanupper{2}_a+\Romanupper{2}_b+\Romanupper{2}_c.
\end{align*}
It is easy to see that
\[
\abs{\Romanupper{2}_c}\lesssim\frac{\ep^2}{\jb{t+r}^3\jb{t-r}^a}\ln\frac{\jb{t+r}}{\jb{t-r}}.
\]
For the term $\Romanupper{2}_a$ we have
\begin{align*}
\Romanupper{2}_a&=\frac{1}{2r}(\partial_r-\partial_t)\Bigl(\frac{1}{r}\int_{-\infty}^{q}\!n(\eta, \omega)\,d\eta\chi\Bigl(\frac{\jb{q}}{r}\Bigr)\chi_{in}(q)+\ln\frac{2r}{\jb{t-r}}\int_{-\infty}^{q}\!n(\eta, \omega)\,d\eta\chi'\Bigl(\frac{\jb{q}}{r}\Bigr)\frac{-\jb{t-r}}{r^2}\chi_{in}(q)\Bigr)\\
&=\frac{1}{r^2}n(q, \omega)\chi\Bigl(\frac{\jb{q}}{r}\Bigr)\chi_{in}(q)+\frac{1}{r^2}\int_{-\infty}^{q}\!n(\eta, \omega)\,d\eta\chi\Bigl(\frac{\jb{q}}{r}\Bigr)\chi'_{in}(q)-\frac{1}{2r^3}\int_{-\infty}^{q}\!n(\eta, \omega)\,d\eta\chi\Bigl(\frac{\jb{q}}{r}\Bigr)\chi_{in}(q)\\
&\qquad+\frac{1}{2r^2}\int_{-\infty}^{q}\!n(\eta, \omega)\,d\eta\chi'\Bigl(\frac{\jb{q}}{r}\Bigr)\Bigl(\frac{2(r-t)}{\jb{r-t}r}-\frac{\jb{r-t}}{r^2}\Bigr)\chi_{in}+\frac{1}{r}\ln\frac{2r}{\jb{t-r}}n(q, \omega)\chi'\Bigl(\frac{\jb{q}}{r}\Bigr)\frac{-\jb{t-r}}{r^2}\chi_{in}\\
&\qquad+\frac{1}{2r}\Bigl(\frac{1}{r}+\frac{2(t-r)}{\jb{t-r}^2}\Bigr)\int_{-\infty}^{q}\!n(\eta, \omega)\,d\eta\chi'\Bigl(\frac{\jb{q}}{r}\Bigr)\frac{-\jb{t-r}}{r^2}\chi_{in}(q)\\
&\qquad+\frac{1}{r}\ln\frac{2r}{\jb{t-r}}\int_{-\infty}^q\!n(q, \omega)\,d\eta\chi'\Bigl(\frac{\jb{q}}{r}\Bigr)\frac{-\jb{t-r}}{r^2}\chi'_{in}(q)\\
&\qquad+\frac{1}{2r}\ln\frac{2r}{\jb{t-r}}\int_{-\infty}^q\!n(q, \omega)\,d\eta\biggl(\Bigl(\frac{2(t-r)}{r^3}+\frac{\jb{r-t}^2}{r^4}\Bigr)\chi''+\Bigl(\frac{2\jb{t-r}}{r^3}+\frac{4(t-r)}{r^2}\Bigr)\chi'\biggr)\chi_{in}\\
&=\frac{1}{r^2}n(q, \omega)\chi\Bigl(\frac{\jb{q}}{r}\Bigr)\chi_{in}(q)+\frac{1}{r^2}\int_{-\infty}^{q}\!n(\eta, \omega)\,d\eta\chi\Bigl(\frac{\jb{q}}{r}\Bigr)\chi'_{in}(q)+\mathcal{R}_a.
\end{align*}
By similar calculation we can obtain 
\[
\Romanupper{2}_b=\frac{1}{r^2}n(q, \omega)\chi\Bigl(\frac{\jb{q}}{r}\Bigr)\bigl(1-\chi_{in}(q)\bigr)+\frac{1}{r^2}\int_{q}^{\infty}\!n(\eta, \omega)\,d\eta\chi\Bigl(\frac{\jb{q}}{r}\Bigr)\chi'_{in}(q)+\mathcal{R}_b.
\]
Owing to the assumption ~\eqref{eq:condition_on_norm} and the fact that in the support of $\chi$, $t\sim r\sim\jb{t+r}$, we conclude that 
\[
\abs{\mathcal{R}_a}+\abs{\mathcal{R}_b}\lesssim\frac{\ep^2}{\jb{t+r}^3\jb{t-r}^a}.
\]
Putting all of the above estimates together, we arrive at the estimate
\[
\Box\varPhi_1[n]=-\frac{1}{r^2}n(q, \omega)\chi\Bigl(\frac{\jb{q}}{r}\Bigr)+\mathcal{R}
\]
with
\[
\abs{\mathcal{R}}\lesssim\frac{\ep^2}{\jb{t+r}^3\jb{t-r}^a}\ln\frac{\jb{t+r}}{\jb{t-r}}.
\]
Therefore we have
\[
\abs{\Box \varPhi_0[n]}=\abs{\Box\varPhi[n]-\Box\varPhi_1[n]}=\abs{-\mathcal{R}}\lesssim\frac{\ep^2}{\jb{t+r}^3\jb{t-r}^a}\ln\frac{\jb{t+r}}{\jb{t-r}}.
\]
Moreover, thanks to the following estimates
\begin{equation}\label{eq:vectorfield_on_radiation}
\begin{split}
\abs{\partial_\mu n(q, \omega)}\leq \abs{\partial_qn}+\abs{\frac{\partial_\omega n}{r}}, \quad \abs{\Omega_{ij}n(q, \omega)}\leq\abs{\partial_\omega n}, \\
\quad Sn(q, \omega)=q\partial_q n, \quad \abs{\Omega_{0i} n(q, \omega)}\leq\abs{q\partial_qn}+\frac{t}{r}\abs{\partial_\omega n},
\end{split}
\end{equation}
the commutation identities ~\eqref{eq:com_iden} and the estimates ~\eqref{eq:R_I}, ~\eqref{eq:I_b},  it holds that 
\begin{equation}\label{eq:condition1}
\abs{\Box Z^J \varPhi_0[n]}\lesssim\frac{\ep^2}{\jb{t+r}^3\jb{t-r}^a}\ln\frac{\jb{t+r}}{\jb{t-r}}.
\end{equation}
Using the radial estimate from Lemma ~\ref{lem:radial_estimate} we obtain
\begin{equation}\label{eq:condition2}
\abs{Z^J\varPhi_0[n]}\lesssim
\begin{cases}
\ep^2\jb{t+r}\jb{t-r}^{-a}, \quad &0<a<1,\\
C_\mu\ep^2\jb{t+r}^{-1}\jb{q_+}^{-a}\jb{q_-}^{-\mu}, \quad &1\leq a<2,
\end{cases}
\end{equation}
with any $0<\mu<1$. Due to the estimates ~\eqref{eq:condition1} and ~\eqref{eq:condition2} we can exploit Lemma ~\ref{lem:existence_of_radiationfield} to prove the existence of $\Psi_0^\infty(q, \omega)$. The conclusion ~\eqref{eq:decay} follows from the estimate ~\eqref{eq:condition2}.
	\end{proof}

\subsection{Asymptotic Lorenz gauge condition}\label{subsec:asym_lorenz}

In view of the asymptotics result in ~\cite{CKL19} and the refined asymptotics we established in Proposition ~\ref{prop:refined_asymptotics}, we obtain that in the wave zone
\begin{equation}\label{eq:restate_asymtotics}
\begin{split}
\phi(t, r\omega)&\sim e^{-i\frac{\BFq}{4\pi}\ln(1+r)}\frac{\Phi(q, \omega)}{r}:=\phi_{asy}\\
A_{\alpha, asy}&=\frac{1}{4\pi}\int_{-\infty}^\infty\!\int_{\BS^2}\!\frac{\mathcal{J}_{\alpha}(\eta, \sigma)}{t-r\angles{\omega}{\sigma}}\, dS(\sigma)\,dq\chi_{in}(q)-\frac{1}{2r}\ln\frac{\jb{t+r}}{\jb{t-r}}\int_{-\infty}^q\!\mathcal{J}_{\alpha}(\eta, \omega)\, d\eta\chi_{in}(q)\\
&\quad+\Bigl(\frac{1}{2r}\ln\frac{\jb{t+r}}{\jb{t-r}}\int_{q}^\infty\!\mathcal{J}_{\alpha}(\eta, \omega)\, d\eta\Bigr)\bigl(1-\chi_{in}(q)\bigr)+\frac{\mathcal{A}_\alpha(q, \omega)}{r}+\frac{\BFq}{4\pi r }\chi_{ex}(q)\delta_{\alpha 0}.
\end{split}
\end{equation}
where $\mathcal{J}_\alpha(\eta, \omega)=L_\alpha(\omega)\Im(\Phi(\eta, \omega)\overline{\partial_q\Phi(\eta, \omega)})$, and $\chi_{in}$ and $\chi_{ex}$ are two smooth cutoffs such that $\chi_{in}(q)=1$ when $q\leq -1$ and $\chi_{in}(q)=0$
when $q\geq -1/2$, while $\chi_{ex}(q)=1$ for $q\geq 1$ and $\chi_{ex}(q)=0$ for $q\leq 1/2$.

Meanwhile, the \emph{asymptotic Lorenz gauge condition} impose a requirement that the radiation set $(\Phi, \mathcal{A}_\alpha)$ should be chosen such that modulo the terms of order $\mathcal{O}(r^{-2}\ln r)$, 
$\partial^\alpha A_{\alpha, asy}(t-r, r\omega)=0$ for each $q$ and large $r$.
\begin{prop}\label{prop:asy_Lorenz}
Suppose the  radiation set $(\Phi(q, \omega), \mathcal{A}_\alpha(q, \omega))$ satisfies the asymptotic Lorenz gauge condition, then we have
\begin{equation}
\partial_q\Bigl(L^\alpha(\omega)\mathcal{A}_{\alpha}(q, \omega)+\frac{\BFq}{4\pi}\chi_{in}(q)+\frac{\BFq}{4\pi}\chi_{ex}(q)\Bigr)=0.
\end{equation}
\end{prop}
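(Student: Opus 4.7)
The plan is to compute $\partial^\alpha A_{\alpha,asy}$ in the wave zone term by term, keeping only contributions of size $r^{-1}$, and then impose the condition $\partial^\alpha A_{\alpha,asy}=O(r^{-2}\ln r)$. Write $A_{\alpha,asy}=I_\alpha+J_\alpha^-+J_\alpha^++\mathcal{A}_\alpha/r+\BFq\chi_{ex}(q)\delta_{\alpha0}/(4\pi r)$, where
\[
I_\alpha=\frac{\chi_{in}(q)}{4\pi}\int_{-\infty}^\infty\!\int_{\BS^2}\!\frac{\mathcal{J}_\alpha(\eta,\sigma)}{t-r\langle\omega,\sigma\rangle}\,dS(\sigma)\,d\eta,
\]
and $J_\alpha^-,J_\alpha^+$ are the two logarithmic-in-$(t-r)$ pieces. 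The organizing principle is the null algebra $L^\alpha L_\alpha=0$, $\underline{L}^\alpha L_\alpha=-2$, and $\partial^\alpha q=L^\alpha$: because $\mathcal{J}_\alpha=L_\alpha(\omega)\Im(\Phi\overline{\partial_q\Phi})$, every quantity proportional to $L_\alpha$ is annihilated by contraction with $L^\alpha$, which collapses the nominally $r^{-1}$ contributions.

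For $I_\alpha$ one computes $\partial^\alpha[(t-\langle x,\sigma\rangle)^{-1}]=L^\alpha(\sigma)(t-\langle x,\sigma\rangle)^{-2}$, and contracting with $L_\alpha(\sigma)$ from $\mathcal{J}_\alpha$ yields zero, so the integral $T_\alpha$ is divergence-free. The only surviving contribution comes from $\partial^\alpha\chi_{in}(q)=L^\alpha\chi_{in}'(q)$, producing $L^\alpha(\omega)T_\alpha\,\chi_{in}'(q)$. Using $L^\alpha(\omega)L_\alpha(\sigma)=\langle\omega,\sigma\rangle-1=-a$ and $t-r\langle\omega,\sigma\rangle=-q+ra$ and then proceeding as in the proof of Proposition~\ref{prop:definition_of_charge} (splitting $a/(a-q/r)=1+(q/r)/(a-q/r)$ and invoking the definition of $\BFq$), one finds $L^\alpha(\omega)T_\alpha=\BFq/(4\pi r)+O(r^{-2}\ln r)$ uniformly on the support of $\chi_{in}'$. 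The Coulombic piece is handled by the general identity $\partial^\alpha(h_\alpha(q,\omega)/r)=\partial_q(L^\alpha h_\alpha)/r+O(r^{-2})$ with $h_\alpha=\BFq\chi_{ex}(q)\delta_{\alpha0}/(4\pi)$ and $L^0=1$, giving $\BFq\chi_{ex}'(q)/(4\pi r)+O(r^{-2})$.

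The pieces $J_\alpha^\pm$ contribute only at subleading order. Indeed, $\int_{-\infty}^q\mathcal{J}_\alpha\,d\eta$ and $\int_q^\infty\mathcal{J}_\alpha\,d\eta$ are both proportional to $L_\alpha(\omega)$, so contracting $L^\alpha$ against them vanishes; hence any divergence contribution from differentiating $\chi_{in}$, $r^{-1}$, or the $L$-part of $\partial^\alpha f$ (with $f=\ln(\langle t+r\rangle/\langle t-r\rangle)$) drops out. The only nonzero pairing comes from the $\underline{L}$-component of $\partial^\alpha f$: using $\partial^\alpha\ln\langle t+r\rangle=-(t+r)\langle t+r\rangle^{-2}\underline{L}^\alpha$ together with $\underline{L}^\alpha L_\alpha=-2$, this gives a contribution of size $O(r^{-2})$, hence absorbed in the error. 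For the radiation piece, the identity $\partial^\alpha(\mathcal{A}_\alpha/r)=\partial_q(L^\alpha\mathcal{A}_\alpha)/r+O(r^{-2})$ (which follows from $Lq=0$, $\underline{L}q=-2$, and $L\omega=\underline{L}\omega=0$) gives $\partial_q(L^\alpha\mathcal{A}_\alpha)/r+O(r^{-2})$.

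Collecting the $r^{-1}$ contributions and equating to zero modulo $O(r^{-2}\ln r)$ yields
\[
\partial_q(L^\alpha\mathcal{A}_\alpha)+\frac{\BFq}{4\pi}\chi_{in}'(q)+\frac{\BFq}{4\pi}\chi_{ex}'(q)=0,
\]
which is the claimed ODE after pulling out $\partial_q$. The main technical obstacle is the precise identification of $L^\alpha(\omega)T_\alpha=\BFq/(4\pi r)+O(r^{-2}\ln r)$ on the support of $\chi_{in}'$, since this requires controlling the near-diagonal singularity of the sphere integral $\int(a-q/r)^{-1}dS$ as $r\to\infty$ with $q$ fixed in a bounded negative set; fortunately this is essentially the same integration-by-parts-in-$a$ analysis already carried out for Proposition~\ref{prop:definition_of_charge}, so no new ideas are needed beyond careful bookkeeping.
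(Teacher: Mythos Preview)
Your proposal is correct and follows essentially the same approach as the paper: the paper likewise splits $\partial^\alpha A_{\alpha,asy}$ into the four pieces $\Romanupper{1}$--$\Romanupper{4}$, exploits $L^\alpha(\sigma)L_\alpha(\sigma)=0$ to kill the divergence of the kernel, extracts $\BFq/(4\pi r)\chi_{in}'(q)$ from the surviving boundary term, and shows the logarithmic pieces contribute only at $O(r^{-2}\ln r)$ via the same null algebra. The only cosmetic difference is that the paper bounds the remainder $\mathcal{R}_{\Romanupper{1}}$ by invoking Lemma~\ref{lem:taylor_expansion_in_n} rather than the computation in Proposition~\ref{prop:definition_of_charge}, but on $\operatorname{supp}\chi_{in}'$ (where $q$ is bounded) your direct splitting $a/(a-q/r)=1+(q/r)/(a-q/r)$ is equally valid and arguably more transparent.
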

\begin{proof}
Since we do all of the calculation in the region when $r$ is large, we may assume that $r>1$. We compute that 
\begin{align*}
&\partial^\alpha A_{\alpha, asy}(t, r\omega)\\
&\qquad=\partial^\alpha\Bigl(\frac{1}{4\pi}\int_{-\infty}^\infty\!\int_{\BS^2}\!\frac{\mathcal{J}_{\alpha}(\eta, \sigma)}{t-r\angles{\omega}{\sigma}}\, dS(\sigma)\,d\eta\chi_{in}(q)\Bigr)-\partial^\alpha\Bigl(\frac{1}{2r}\ln\frac{\jb{t+r}}{\jb{t-r}}\int_{-\infty}^q\!\mathcal{J}_{\alpha}(\eta, \omega)\, d\eta\chi_{in}(q)\Bigr)\\
&\qquad\qquad+\partial^\alpha\Bigl(\frac{1}{2r}\ln\frac{\jb{t+r}}{\jb{t-r}}\int_{q}^\infty\!\mathcal{J}_{\alpha}(\eta, \omega)\, d\eta\bigl(1-\chi_{in}(q)\bigr)\Bigr)+\partial^\alpha\Bigl(\frac{\BFq}{4\pi r}\chi_{ex}(q)\delta_{\alpha 0}+\frac{\mathcal{A}_\alpha(q, \omega)}{r}\Bigr)\\
&\qquad\equiv\Romanupper{1}+\Romanupper{2}+\Romanupper{3}+\Romanupper{4}.
\end{align*}
For the term $\Romanupper{1}$, we have
\begin{align*}
\Romanupper{1}&=\frac{1}{4\pi}\int_{-\infty}^\infty\!\int_{\BS^2}\!\frac{L^\alpha(\sigma)\mathcal{J}_{\alpha}(\eta, \sigma)}{\left(t-r\angles{\omega}{\sigma}\right)^2}\, dS(\sigma)\,d\eta\chi_{in}(q)+\frac{1}{4\pi}\int_{-\infty}^\infty\!\int_{\BS^2}\!\frac{L^\alpha(\omega)\mathcal{J}_{\alpha}(\eta, \sigma)}{t-r\angles{\omega}{\sigma}}\, dS(\sigma)\,d\eta\chi_{in}'(q)\\
&=\frac{1}{4\pi}\int_{-\infty}^\infty\!\int_{\BS^2}\!\frac{\left(1-\angles{\omega}{\sigma}\right)\mathcal{J}_0(\eta, \sigma)}{t-r\angles{\omega}{\sigma}}\, dS(\sigma)\,d\eta\chi_{in}'(q)\\
&=\frac{1}{4\pi r}\int_{-\infty}^\infty\!\int_{\BS^2}\!\mathcal{J}_0(\eta, \sigma)\,d\eta\,dS(\sigma)\chi'_{in}(q)+\frac{r-t}{4\pi r}\int_{-\infty}^\infty\!\int_{\BS^2}\!\frac{\mathcal{J}_0(\eta, \sigma)}{t-r\angles{\omega}{\sigma}}\, dS(\sigma)\,d\eta\chi_{in}'(q)\\
&=\frac{\BFq}{4\pi r}\chi'_{in}(q)+\mathcal{R}_{\Romanupper{1}}.
\end{align*}
By the estimate ~\eqref{eq:estimate_of_phik} from Lemma ~\ref{lem:taylor_expansion_in_n}, it follows that
\begin{align*}
\mathcal{R}_{\Romanupper{1}}&=\frac{r-t}{4\pi tr}\int_{-\infty}^\infty\!\int_{\BS^2}\!\frac{\mathcal{J}_0(\eta, \omega)}{1-(r/t)\angles{\omega}{\sigma}}\, dS(\sigma)\,d\eta\chi_{in}'(q)+\frac{r-t}{4\pi r}\int_{-\infty}^\infty\!\int_{\BS^2}\!\frac{\mathcal{J}_0(\eta, \sigma)-\mathcal{J}_0(\eta, \omega)}{t-r\angles{\omega}{\sigma}}\, dS(\sigma)\,d\eta\chi_{in}'(q)\\
&=\frac{r-t}{4\pi r^2}\ln\frac{t+r}{t-r}\int_{-\infty}^\infty\!\mathcal{J}_0(\eta, \omega)\,d\eta\chi_{in}'(q)+\frac{r-t}{4\pi r}\int_{-\infty}^\infty\!\int_{\BS^2}\!\frac{\mathcal{J}_0(\eta, \sigma)-\mathcal{J}_0(\eta, \omega)}{t-r\angles{\omega}{\sigma}}\, dS(\sigma)\,d\eta\chi_{in}'(q)\\
&\lesssim\frac{1}{r^2}\left(1+\ln r\right).
\end{align*}
By straightforward calculation we find that
\begin{align*}
\Romanupper{2}&=\frac{1}{2r^2}\ln\frac{\jb{t+r}}{\jb{t-r}}\int_{-\infty}^q\!-\mathcal{J}_0(\eta, \omega)\, d\eta\chi_{in}(q)+\frac{1}{2r}\frac{(r-t)}{\jb{t-r}^2}\int_{-\infty}^q\!L^\alpha(\omega)\mathcal{J}_{\alpha}(\eta, \omega)\, d\eta\chi_{in}(q)\\
&\qquad+\frac{1}{2r}\frac{(t+r)}{\jb{t+r}^2}\int_{-\infty}^q\!2\mathcal{J}_0(\eta, \omega)\, d\eta\chi_{in}(q)\\
&\qquad-\frac{1}{2r}\ln\frac{\jb{t+r}}{\jb{t-r}}L^\alpha(\omega)\mathcal{J}_{\alpha}(\eta, \omega)\chi_{in}(q)-\frac{1}{2r}\ln\frac{\jb{t+r}}{\jb{t-r}}\int_{-\infty}^q\!L^\alpha(\omega)\mathcal{J}_{\alpha}(\eta, \omega)\, d\eta\chi'_{in}(q)\\
&\lesssim\frac{1}{r^2}\ln \frac{\jb{t+r}}{\jb{t-r}}
\end{align*}
and
\begin{align*}
\Romanupper{3}&=\frac{1}{2r^2}\ln\frac{\jb{t+r}}{\jb{t-r}}\int_{q}^\infty\!\mathcal{J}_0(\eta, \omega)\, d\eta(1-\chi_{in})-\frac{1}{2r}\frac{(r-t)}{\jb{t-r}^2}\int_q^\infty\!L^\alpha(\omega)\mathcal{J}_{\alpha}(\eta, \omega)\, d\eta(1-\chi_{in})\\
&\qquad-\frac{1}{2r}\frac{(t+r)}{\jb{t+r}^2}\int_q^\infty\!2\mathcal{J}_0(\eta, \omega)\, d\eta(1-\chi_{in})\\
&\qquad-\frac{1}{2r}\ln\frac{\jb{t+r}}{\jb{t-r}}L^\alpha(\omega)\mathcal{J}_{\alpha}(\eta, \omega)(1-\chi_{in})-\frac{1}{2r}\ln\frac{\jb{t+r}}{\jb{t-r}}\int_{-\infty}^q\!L^\alpha(\omega)\mathcal{J}_{\alpha}(\eta, \omega)\, d\eta\chi'_{in}(q)\\
&\lesssim\frac{1}{r^2}\ln \frac{\jb{t+r}}{\jb{t-r}}.
\end{align*}
Now let us turn to the term $\Romanupper{4}$
\begin{align*}
\Romanupper{4}&=\frac{\BFq}{4\pi r}\chi'_{ex}(q)+L^\alpha(\omega)\frac{\partial_q\mathcal{A}(q, \omega)}{r}+\frac{\slashed{\partial}^i\mathcal{A}_i(q, \omega)}{r}-\frac{\omega^i\mathcal{A}_i(q, \omega)}{r^2}\\
&=\frac{\BFq}{4\pi r}\chi'_{ex}(q)+L^\alpha(\omega)\frac{\partial_q\mathcal{A}(q, \omega)}{r}+\mathcal{O}(\frac{1}{r^2}).\\
\end{align*}
Putting all of the above estimates together and modulo the terms of order $\mathcal{O}(r^{-2}\ln r)$, we arrive at
\[
\partial^\alpha A_{\alpha, asy}=\frac{1}{r}\partial_q\Bigl(\frac{\BFq}{4\pi }\chi_{in}(q)+L^\alpha(\omega)\mathcal{A}_\alpha(q, \omega)+\frac{\BFq}{4\pi }\chi_{ex}(q)\Bigr).
\]
This finishes the proof of Proposition ~\ref{prop:asy_Lorenz}.
\end{proof}
From the decay of $\mathcal{A}_\alpha$ in $q$, we see that \[\lim_{q\to\infty}\bigl(\frac{\BFq}{4\pi }\chi_{in}(q)+L^\alpha(\omega)\mathcal{A}_\alpha(q, \omega)+\frac{\BFq}{4\pi }\chi_{ex}(q)\bigr)=\frac{\BFq}{4\pi}
\]
from which we can infer that 
\begin{equation}\label{eq:expression_for_A_L}
\mathcal{A}_L(q, \omega)=L^\alpha(\omega)\mathcal{A}_\alpha(q, \omega)=
\begin{cases}
\frac{\BFq}{4\pi}(1-\chi_{in}(q)), \quad &q<0,\\
\frac{\BFq}{4\pi}(1-\chi_{ex}(q)), \quad &q\geq0.
\end{cases}
\end{equation}

\section{Reduced MKG equations}\label{sec:4}

In this section our goal is to solve the reduced MKG equations ~\eqref{eq:rmkg} from infinity with the asymptotics given of the form ~\eqref{eq:asym_of_two} in wave zone and ~\eqref{eq:K_timelike} in the far interior.

We construct the approximate solution associated to the asymptotics ~\eqref{eq:asym_of_two} and ~\eqref{eq:K_timelike} as follows
\begin{equation}\label{eq:app_solution}
\begin{split}
\phi_{app}(t, r\omega)&=e^{-i\frac{\BFq}{4\pi}\ln r}\frac{\Phi(q, \omega)}{r}\chi\Bigl(\frac{\jb{q}}{r}\Bigr)\\
A_{\alpha, app}(t, r\omega)&=\frac{1}{4\pi}\int_{-\infty}^{\infty}\!\int_{\BS^2}\!\frac{\mathcal{J}_{\alpha}(\eta, \sigma)}{t-r\angles{\omega}{\sigma}}\, dS(\sigma)\,d\eta\chi_{ex}(\frac{t}{12})\chi_{in}(q)\\
&\qquad-\frac{1}{2r}\ln\frac{2r}{\jb{t-r}}\int_{-\infty}^q\!\mathcal{J}_{\alpha}(\eta, \omega)\, d\eta\chi\Bigl(\frac{\jb{q}}{r}\Bigr)\chi_{in}(q)\\
&\qquad+\frac{1}{2r}\ln\frac{2r}{\jb{t-r}}\int_{q}^\infty\!\mathcal{J}_{\alpha}(\eta, \omega)\, d\eta\chi\Bigl(\frac{\jb{q}}{r}\Bigr)\Bigl(1-\chi_{in}(q)\Bigr)\\
&\qquad+\frac{\BFq}{4\pi r}\delta_{\alpha 0}\chi_{ex}(q)+\frac{\mathcal{A}_{\alpha}(q, \omega)}{r}\chi\Bigl(\frac{\jb{q}}{r}\Bigr).
\end{split}
\end{equation}
with $(\Phi, \mathcal{A}_\alpha)$ satisfying the \emph{asymptotic Lorenz gauge condition} and the smallness condition on the norm defined in ~\eqref{eq:norm_of_radiation}. Here $\chi(s)=1$ when $s\leq 1/2$ and $\chi(s)=0$ when $s\geq 3/4$, which is used to localize in the wave zone, away from the origin. 

Then we set a smooth cutoff $\widetilde{\chi}$ such that $\widetilde{\chi}(s)=1$ when $s\leq 1$, and $\widetilde{\chi}(s)=0$ when $s\geq 2$. We expect to find a sequence of solutions $(\phi_T, A_{\alpha T})=(\phi_{app}+u_T, A_{\alpha, app}+v_{\alpha T})$ to the reduced MKG equations ~\eqref{eq:rmkg}. To achieve this, we consider the following equations for $(u_T, v_{\alpha T})$ with trivial data at $t=2T$ 
\begin{equation}\label{eq:rmkg_with_cutoff}
\begin{split}
\Box u &=\widetilde{\chi}\bigl(\frac{t}{T}\bigr)\Bigl(-2iA^\alpha\partial_\alpha\phi+A^\alpha A_\alpha \phi-\Box\phi_{app}\Bigr)\\
\Box v_{\alpha} &=\widetilde{\chi}\big(\frac{t}{T}\big)\Bigl(-J_\alpha-\Box A_{\alpha, app}\Bigr), \quad J_{\alpha}=\Im(\phi\overline{(\partial_\alpha+iA_\alpha)\phi}).
\end{split}
\end{equation}

We will see later on that $(\phi_{app}, A_{\alpha, app})$ is a ``good approximate solution" in the sense that module terms of order $\mathcal{O}(r^{-3}\ln r)$, $(\phi_{app}, A_{\alpha, app})$ solves the reduced MKG equations ~\eqref{eq:rmkg}, i.e.
\begin{align*}
\begin{split}
\Box \phi_{app} &\sim-2iA^{\alpha, app}\partial_\alpha\phi_{app}+A^{\alpha, app} A_{\alpha, app}\phi_{app}\\
\Box A_{\alpha, app} &\sim-\Im(\phi_{app}\overline{\partial_\alpha\phi_{app}})+\abs{\phi_{app}}^2A_{\alpha, app}.
\end{split}
\end{align*}
We first point out some basic results
\begin{itemize}
	\item For any vector field $Z$ and multiindex $I$, we have $Z^I u_T=Z^I v_{\alpha T}=0$ at $t=2T$.
	\item Since $\Box\phi_{app}=0$ and $\Box A_{\alpha, app}=0$ when $r\geq 4t$, by finite speed of propagation, we obtain that $u_T=0$ and $v_{\alpha T}=0$ when $r>6T-t$. This also indicates that $Z^I\widetilde{\chi}(t/T)\sim 1$ in the support of $u_T$ or $v_{\alpha T}$.
	\item $(\phi_T, A_{\alpha T})=(\phi_{app}+u_T, A_{\alpha, app}+v_{\alpha T})$ is an exact solution to the reduced MKG equations ~\eqref{eq:rmkg} when $t\leq T$.
\end{itemize}

The main result of this section is the following proposition.
\begin{prop}\label{prop:estimate_for_rmkg}
	Suppose the assumptions of Theorem ~\ref{thm:mainthm} are in place.
	Then there exists a small absolute constant $\ep_0$, which depends on $N, \mu, \gamma$, such that for any radiation field set $(\Phi, \mathcal{A}_\alpha)$ given above which satisfies the asymptotic Lorenz gauge condition and the smallness condition
	\[
	\ep:=\norm{\Phi}_{N, \infty, \gamma}+\norm{\mathcal{A}_\alpha}_{N, \infty, \gamma}\leq\ep_0,
	\]
~\eqref{eq:rmkg_with_cutoff} has a solution $(u, v_\alpha)=(u_T, v_{\alpha T})$ for $0\leq t\leq 2T$, where $(u, v_\alpha)$ has trivial data at $t=2T$. And for all $0\leq t\leq 2T$, we have 
	\begin{align}\label{eq:energy_in_prop}
	\norm{w^{\frac{1}{2}}Z^Iu_T}_{L^2(\Sigma_t)}+\norm{w^{\frac{1}{2}}Z^Iv_{\alpha T}}_{L^2(\Sigma_t)}&\leq C\ep\jb{t}^{-\frac{\gamma}{2}+\frac{\mu}{2}+\frac{1}{4}} \quad \text{for} \quad \abs{I}\leq N-2,\\\label{eq:decay_in_prop}
	\abs{w^{\frac{1}{2}}Z^Iu_T}+\abs{w^{\frac{1}{2}}Z^Iv_{\alpha T}}&\leq	C\ep\jb{t+r}^{-1}\jb{t-r}^{-\frac{1}{2}}\jb{t}^{-\frac{\gamma}{2}+\frac{\mu}{2}+\frac{1}{4}} \quad \text{for}  \quad \abs{I}\leq N-4
	\end{align}
		where $C$ only depends on $\gamma, \mu, N$ and 
	\[
	w(q)=\begin{cases}
	1+(1+\abs{q})^\mu, \quad &q<0,\\
	1+(1+\abs{q})^{-\gamma+\frac{1}{2}}, \quad &q\geq 0.
	\end{cases}
	\]
\end{prop}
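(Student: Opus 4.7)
The plan is to construct $(u_T, v_{\alpha T})$ by solving \eqref{eq:rmkg_with_cutoff} backward from the trivial data at $t=2T$, to establish the energy bound \eqref{eq:energy_in_prop} via a bootstrap argument on the slab $[0,2T]$, and to derive the pointwise bound \eqref{eq:decay_in_prop} from the weighted Klainerman--Sobolev inequality of Lemma~\ref{lem:KS_with_weight}, which accounts for the two-derivative gap between $|I|\leq N-2$ and $|I|\leq N-4$. The cutoff $\widetilde\chi(t/T)$ together with finite speed of propagation confines the support of $(u_T, v_{\alpha T})$ to $\{r\leq 6T-t\}$, so the nonlinearity is effectively truncated in $t$, and the a priori bound, once proved on a maximal sub-slab, propagates the solution all the way down to $t=0$ by standard continuation for a semilinear wave system.

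\textbf{Energy identity and classification of sources.} I commute $Z^I$ with $|I|\leq N-2$ through \eqref{eq:rmkg_with_cutoff} using \eqref{eq:com_iden}, and apply the backward weighted conformal Morawetz identity of Proposition~\ref{prop:weightedenergyestimate} with the weight $w$ prescribed in the statement (which satisfies $w'\leq 0$, producing a favorable spacetime term on the left-hand side). This reduces matters to bounding source integrals of the form $\int_{t_1}^{2T}\!\int_{\Sigma_\tau}\!\Re\bigl(r^{-1}\overline{K}_0(rZ^I\cdot)\overline{F}\bigr)w\,dx\,d\tau$. The source $F$ splits into three types: (a) approximate-solution errors $Z^I\Box\phi_{app}$ and $Z^I\Box A_{\alpha,app}$, which by \eqref{eq:goodapp} and Proposition~\ref{prop:refined_asymptotics} are $\mathcal{O}(\ep\,r^{-3}\ln r)$ in the wave zone and rapidly decaying elsewhere, so after Cauchy--Schwarz against $\overline{K}_0/r$ they contribute an integrable-in-$t$ term; (b) cubic contributions such as $A^\alpha A_\alpha\phi$, which carry two powers of $\ep$ to spare and close with large room from the bootstrap hypothesis; (c) the delicate quadratic remainder terms of type $u\overline{\partial\phi_{app}}$, $\phi_{app}\overline{\partial u}$, $v_\alpha\,\partial\phi_{app}$, and $\phi_{app}\,\partial v_\alpha$, in which the remainder itself appears without a derivative.

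\textbf{Main obstacle: the delicate quadratic terms.} Item (c) is the crux, exactly as highlighted in \S\ref{subsubsec:difficulties}, and I follow the two-step strategy sketched in the introduction. For the $\tfrac12\jb{t-r}^2\underline{L}$ component of $\overline{K}_0$, the factor $\jb{t-r}$ compensates for the missing derivative, and Proposition~\ref{prop:control_of_norm} together with the bound $|Z^J\phi_{app}|\lesssim\ep\jb{t+r}^{-1}\jb{t-r}^{-\gamma}$ control the contribution by $\int\ep\jb{t}^{-1}\bigl(E^w[Z^Iu]+E^w[Z^Iv_\alpha]\bigr)\,dt$. For the $\tfrac12\jb{t+r}^2L$ component, I separate the top-order piece $\Im(\phi_{app}\overline{\partial_\alpha Z^Iu})$ from the lower-order commutators; the latter are dominated via $w\jb{t-r}^{-1-\gamma}\lesssim|w'|$ and the spacetime Hardy inequality (Lemma~\ref{lem:poincareineq}), which applies because $Z^Ku=\partial_tZ^Ku=0$ at $t=2T$. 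The top-order piece is tackled by decomposing $\partial_\alpha Z^Iu$ in the null frame \eqref{eq:Euframe}: the tangential $L$ and $e_B$ directions absorb an extra $\jb{t+r}^{-1}$, while the bad $\partial_q$ direction is handled by integrating by parts in $q$. The identity $\Box\varphi=2r^{-1}\partial_qL(r\varphi)+\tfrac{1}{2}r^{-2}\sum_{i,j}\Omega_{ij}^2\varphi$ lets me trade $\partial_qZ^Iu$ for $\Box Z^Iv_\alpha$; re-inserting the equation for $\Box Z^Iv_\alpha$ and integrating by parts in $q$ a second time produces a quartic spacetime integral controllable by Hardy against $|w'|$, while the remaining angular piece is integrated by parts in the spatial variables to shed one $\Omega_{ij}$. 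Summing over $|I|\leq N-2$ and applying Gronwall with $\ep\leq\ep_0$ closes the bootstrap with growth rate $\jb{t}^{-\gamma/2+\mu/2+1/4}$; the constraint $\mu<\gamma-1/2$ keeps this exponent strictly below $1/2$, which is precisely what the Hardy and conformal Morawetz machinery can accommodate.
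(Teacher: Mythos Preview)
Your outline follows the paper's approach and captures the conformal Morawetz/Hardy/integration-by-parts machinery correctly for the $v_\alpha$ equation, but there are two concrete gaps.

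\emph{Derivative bookkeeping.} You commute $Z^I$ with $|I|\leq N-2$; the paper commutes only up to $|I|\leq k=N-3$. This matters: the approximate-solution error estimates (Lemma~\ref{lem:estiamte_of_u1_v1}) require $D_{|I|+3}$, and at $|I|=N-2$ this would call for $D_{N+1}$, which is not in the hypotheses. The bound \eqref{eq:energy_in_prop} at level $N-2$ is recovered afterward from the conformal energy at level $N-3$ via Proposition~\ref{prop:control_of_norm}, which yields one extra $Z$ in $L^2$ for free.

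\emph{The $A_{L,app}\,\partial_q u$ term.} Your list of delicate quadratic terms includes ``$\phi_{app}\,\partial v_\alpha$'', which does not occur; the term you are missing in the $u$-equation is $A^\alpha_{app}\partial_\alpha u$, and its $\underline L$-null component $A_{L,app}\partial_q u$ is the one place where your scheme breaks. Because $A_{L,app}=\tfrac{\BFq}{4\pi r}+\mathcal{R}_L$ carries no $\jb{t-r}^{-\gamma}$ decay in the exterior, the lower-order commutator pieces satisfy only $\lesssim \ep\,r^{-1}\jb{t-r}^{-1}\sum|Z^Ju|$, so your reduction ``$w\jb{t-r}^{-1-\gamma}\lesssim|w'|$ then Hardy'' is unavailable: you are one power of $\jb{t-r}^{-\gamma}$ short in the exterior. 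The paper handles this by an explicit commutator computation (Lemma~\ref{lem:refinded_analysis}) showing that, up to a remainder $\mathcal{R}_{|I|}$ with an \emph{extra} factor of $r^{-1}$, one has
\[
\bigl(Z^I+\textstyle\sum c_J^IZ^J\bigr)\Bigl[\widetilde\chi\,\tfrac{\BFq}{4\pi r}\partial_q u\Bigr]=\sum_{J+K=I}\tfrac{\BFq}{4\pi r}\,Z^K\widetilde\chi\,\partial_qZ^Ju+\mathcal{R}_{|I|},
\]
so the top-order structure is preserved for every $J$ and the integration-by-parts-in-$q$ argument can be applied to each summand. Without this step (or an equivalent), the bootstrap does not close for the $\jb{t+r}^2L$ component in the $u$-equation.
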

The proof of Proposition ~\ref{prop:estimate_for_rmkg} proceeds via a continuity argument. More specifically, we define these two bootstrap quantities
\begin{equation}
\mathcal{E}_k(t)=\sup_{t\leq \tau\leq 2T}\sum_{\abs{I}\leq k}E^w[Z^Iv_\alpha](\tau)+E^w[Z^Iu](\tau), \quad k=N-3
\end{equation}
and 
\begin{equation}
S_k(t)=\sum_{\abs{I}\leq k}\int_t^{2T}\!\int_{\Sigma_\tau}\!\jb{t+r}^2\abs{\frac{1}{r}L(rZ^Iu)}^2+\jb{t+r}^2\abs{\frac{1}{r}L(rZ^Iv_{\alpha})}^2\abs{w'}\,dx\,d\tau, \quad k=N-3.
\end{equation}
with $w$ defined in Proposition ~\ref{prop:estimate_for_rmkg}. We clearly have 
\begin{equation}
\abs{w'}\sim\mu w(1+\abs{q})^{-1} \quad \text{for} \quad q<0;\quad\quad \abs{w'}\sim(\gamma-1/2)w(1+\abs{q})^{-\gamma-1/2}
\quad\text{for}\quad q>0.
\end{equation}
Then we apply Corollary ~\ref{cor:energyestimate_for_u} with $\delta=\gamma-1/2$ and $\nu=\mu$ to $v_{\alpha}$ and $u$ to close the bootstrap argument.

\medskip

We commute the equations ~\eqref{eq:rmkg_with_cutoff} with $Z^I$ for $\abs{I}\leq k=N-3$ and rewrite it as
\begin{equation}\label{eq:expansion_of_eqn_for_v}
\begin{split}
\Box v_\alpha&=\bigl(Z^I+\sum_{\abs{J}\leq\abs{I}-1}c_J^IZ^J\bigr)\biggl[\widetilde{\chi}(\frac{t}{T})\Bigl(-\Im(\phi_{app}\overline{\partial_\alpha\phi_{app}})+\abs{\phi_{app}}^2A_{\alpha, app}-\Box A_{\alpha, app}\Bigr)\biggr]\\
&\qquad+\bigl(Z^I+\sum_{\abs{J}\leq\abs{I}-1}c_J^IZ^J\bigr)\biggl[\widetilde{\chi}(\frac{t}{T})\Bigl(-\Im(u\overline{\partial_\alpha\phi_{app}})-\Im(\phi_{app}\overline{\partial_\alpha u})-\Im(u\overline{\partial_\alpha u})\Bigr)\biggr]\\
&\qquad+\bigl(Z^I+\sum_{\abs{J}\leq\abs{I}-1}c_J^IZ^J\bigr)\biggl[\widetilde{\chi}(\frac{t}{T})\Bigl(\Re(\phi_{app}\overline{u})A_{\alpha, app}+\abs{\phi_{app}}^2v_\alpha\Bigr)\biggr]\\
&\qquad+\bigl(Z^I+\sum_{\abs{J}\leq\abs{I}-1}c_J^IZ^J\bigr)\biggl[\widetilde{\chi}(\frac{t}{T})\Bigl(\Re(\phi_{app}\overline{u})v_\alpha+\abs{u}^2A_{\alpha, app}\Bigr)\biggr]\\
&\qquad+\bigl(Z^I+\sum_{\abs{J}\leq\abs{I}-1}c_J^IZ^J\bigr)\Bigl[\widetilde{\chi}(\frac{t}{T})\abs{u}^2v_\alpha\Bigr]\\
&\equiv \mathcal{V}_{\alpha 1}+\mathcal{V}_{\alpha 2}+\mathcal{V}_{\alpha 3}+\mathcal{V}_{\alpha 4}+\mathcal{V}_{\alpha 5},
\end{split}
\end{equation}
and
\begin{equation}\label{eq:expansion_of_eqn_for_u}
\begin{split}
\Box Z^Iu=&\bigl(Z^I+\sum_{\abs{J}\leq\abs{I}-1}c_J^IZ^J\bigr)\biggl[\widetilde{\chi}(\frac{t}{T})\Big(-2iA^\alpha_{app}\partial_\alpha\phi_{app}+A^\alpha_{app}A_{\alpha, app}\phi_{app}-\Box\phi_{app}\Big)\biggr]\\
&\qquad+\bigl(Z^I+\sum_{\abs{J}\leq\abs{I}-1}c_J^IZ^J\bigr)\biggl[\widetilde{\chi}(\frac{t}{T})\Big(-2iv^\alpha\partial_\alpha\phi_{app}-2iA^\alpha_{app}\partial_\alpha u-2iv^\alpha\partial_\alpha u\Big)\biggr]\\
&\qquad+\bigl(Z^I+\sum_{\abs{J}\leq\abs{I}-1}c_J^IZ^J\bigr)\biggl[\widetilde{\chi}(\frac{t}{T})\Big(v^\alpha A_{\alpha, app}\phi_{app}+A^\alpha_{app}v_\alpha\phi_{app}+A^\alpha_{app}A_{\alpha, app}u\Big)\biggr]\\
&\qquad+\bigl(Z^I+\sum_{\abs{J}\leq\abs{I}-1}c_J^IZ^J\bigr)\biggl[\widetilde{\chi}(\frac{t}{T})\Big(v^\alpha v_\alpha\phi_{app}+v^\alpha A_{\alpha, app} u+A^\alpha_{app}v_\alpha u\Bigr)\biggr]\\
&\qquad+\bigl(Z^I+\sum_{\abs{J}\leq\abs{I}-1}c_J^IZ^J\bigr)\Bigl[\widetilde{\chi}(\frac{t}{T})v^\alpha v_\alpha u\Bigr]\\
&\equiv\mathcal{U}_1+\mathcal{U}_2+\mathcal{U}_3+\mathcal{U}_4+\mathcal{U}_5.
\end{split}
\end{equation}

For ease of notation in the remainder of this section, we denote the norm $\norm{\Phi}_{k, \infty, \gamma}+\norm{\mathcal{A}_j}_{k, \infty, \gamma}$ by $D_k$ which is a small constant. We also make a convention in the sequel that  the implicit constant in $A\lesssim B$ depends only on $\gamma$, $\mu$ and $N$. Without explicitly mentioning it, we will frequently use the Leibniz's rule ~\eqref{eq:leibniz}, the commutation identities ~\eqref{eq:com_iden}, the estimate ~\eqref{eq:vectorfield_on_radiation} and the fact that $t\sim r\sim\jb{t+r}$ in the support of $\chi$ and $\jb{t-r}\sim\jb{t+r}$ in the support of $\chi'$.

\subsection{Estimates for the approximate solutions}

We first analyze the error terms generated by the approximate solutions, namely $\mathcal{V}_{\alpha 1}$ and $\mathcal{U}_1$ in ~\eqref{eq:expansion_of_eqn_for_v} and ~\eqref{eq:expansion_of_eqn_for_u} respectively. 
\begin{lem}\label{lem:box_of_app}
	We have
	\begin{align}\label{eq:estimate_of_phiapp}
	\biggl\lvert Z^I\Bigl(\Box \phi_{app}+ie^{-i\frac{\BFq}{4\pi}\ln r}\frac{\BFq}{2\pi }\frac{\partial_q\Phi}{r^2}\chi\bigl(\frac{\jb{t-r}}{r}\bigr)\Bigr)\biggr\rvert&\lesssim\frac{D_{\abs{I}+2}}{\jb{t+r}^3\jb{t-r}^\gamma}\chi\Bigl(\frac{\jb{q}}{r}\Bigr),\\\label{eq:estimate_of_Aapp}
		\biggl\lvert Z^I\Bigl(\Box A_{\alpha, app}+\frac{1}{r^2}\mathcal{J}_\alpha\chi\bigl(\frac{\jb{t-r}}{r}\bigr)\Bigr)\biggr\rvert&\lesssim\frac{D_{\abs{I}+2}}{\jb{t+r}^3\jb{t-r}^\gamma}\chi\bigl(\frac{\jb{q}}{r}\bigr)+\frac{D^2_{\abs{I}+1}}{\jb{t+r}^m}\\
		&\qquad+\frac{D_{\abs{I}+3}^2}{\jb{t+r}^3\jb{t-r}^{2\gamma}}\Bigl(1+\ln\frac{\jb{t+r}}{\jb{t-r}}\Bigr)\chi\bigl(\frac{\jb{q}}{r}\bigr)\notag
	\end{align}
	for any $m>0$.
\end{lem}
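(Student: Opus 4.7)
\emph{Proof plan.} The overall strategy is direct computation in the null frame, exploiting the identity $\Box = \frac{2}{r}\partial_q L + \frac{1}{r^2}\Delta_\omega$ (where in the $(q,r,\omega)$ chart we have $L=\partial_r|_{q,\omega}$, so $Lq=0$ and $Lr=1$, while $\partial_q r=0$). For $A_{\alpha,app}$ we supplement this with Proposition~\ref{prop:refined_asymptotics}, whose proof already computes $\Box$ of the integral-plus-logarithm part.

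For $\phi_{app}$: writing $r\phi_{app} = e^{-i\BFq\ln r/(4\pi)}\Phi(q,\omega)\chi(\jb{q}/r)$ and using $L\Phi = 0$, one finds
\[
L(r\phi_{app}) = e^{-i\BFq\ln r/(4\pi)}\Bigl[-\frac{i\BFq}{4\pi r}\Phi\,\chi\Bigl(\tfrac{\jb{q}}{r}\Bigr) - \Phi\,\chi'\Bigl(\tfrac{\jb{q}}{r}\Bigr)\frac{\jb{q}}{r^2}\Bigr].
\]
Applying $\partial_q$ (which kills $r$ and the phase) and multiplying by $2/r$, the distinguished term from $\partial_q$ hitting $\Phi$ in the first summand yields exactly $-ie^{-i\BFq\ln r/(4\pi)}\BFq\,\partial_q\Phi\,\chi(\jb{q}/r)/(2\pi r^2)$, which matches the subtraction in~\eqref{eq:estimate_of_phiapp}. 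All remaining terms either involve $\chi'(\jb{q}/r)$ or $\chi''(\jb{q}/r)$ (supported where $\jb{q}\in[r/2,3r/4]$, so that $r\sim\jb{t+r}\sim\jb{t-r}\sim\jb{q}$) or come from $\frac{1}{r^3}e^{-i\BFq\ln r/(4\pi)}\Delta_\omega\Phi\,\chi(\jb{q}/r)$. Using $\abs{\Phi}, \abs{\Delta_\omega\Phi}\lesssim D_2\jb{q}^{-\gamma}$ and $r\sim\jb{t+r}$ on $\mathrm{supp}\,\chi$, all such terms are bounded by $D_2\jb{t+r}^{-3}\jb{t-r}^{-\gamma}\chi(\jb{q}/r)$, giving~\eqref{eq:estimate_of_phiapp} for $\abs{I}=0$.

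For $A_{\alpha,app}$: decompose into $A^{(1)}_\alpha + A^{(2)}_\alpha + A^{(3)}_\alpha$, where $A^{(1)}_\alpha$ comprises the first three summands of~\eqref{eq:app_solution}, $A^{(2)}_\alpha = \BFq\chi_{ex}(q)\delta_{\alpha 0}/(4\pi r)$, and $A^{(3)}_\alpha = \mathcal{A}_\alpha(q,\omega)\chi(\jb{q}/r)/r$. The piece $A^{(1)}_\alpha$ coincides with $\varPhi_1[\mathcal{J}_\alpha]$ from Proposition~\ref{prop:refined_asymptotics}; the Leibniz identity gives $\norm{\mathcal{J}_\alpha}_{N,\infty,1+2\gamma}\lesssim D_N^2$, and since $2\gamma\in(1,2)$ we may invoke the computation in the proof of that proposition to obtain
\[
\Box A^{(1)}_\alpha = -\frac{\mathcal{J}_\alpha}{r^2}\chi\Bigl(\tfrac{\jb{q}}{r}\Bigr) + \mathcal{R},\qquad \abs{\mathcal{R}}\lesssim \frac{D_N^2}{\jb{t+r}^3\jb{t-r}^{2\gamma}}\ln\frac{\jb{t+r}}{\jb{t-r}}\chi\Bigl(\tfrac{\jb{q}}{r}\Bigr) + \frac{D_N^2}{\jb{t+r}^m},
\]
the last piece arising from derivatives of $\chi_{ex}(t/12)$, which are compactly supported in spacetime (combined with $\chi_{in}(q)$ forcing $r<t\leq 12$). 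For $A^{(2)}_\alpha$, since $rA^{(2)}_\alpha$ depends only on $q$, both $L(rA^{(2)}_\alpha)=0$ and $\Delta_\omega A^{(2)}_\alpha = 0$, so $\Box A^{(2)}_\alpha=0$ on $\mathrm{supp}\,\chi_{ex}(q)\subset\{r\geq t+1/2\}$. For $A^{(3)}_\alpha$, the computation is parallel to that of $\phi_{app}$ (without the phase factor) and yields $\abs{\Box A^{(3)}_\alpha}\lesssim D_2\jb{t+r}^{-3}\jb{t-r}^{-\gamma}\chi(\jb{q}/r)$. Summing gives~\eqref{eq:estimate_of_Aapp} for $\abs{I}=0$.

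For higher $\abs{I}$, apply Leibniz together with the commutation identities~\eqref{eq:com_iden} and the bounds~\eqref{eq:vectorfield_on_radiation} showing that $Z$ acting on functions of $(q,\omega)$ produces at most one extra $\partial_q$, $q\partial_q$, or $\partial_\omega$. Each application of $Z$ raises the norm index on $D$ by one; the index $|I|+3$ in the logarithmic error of~\eqref{eq:estimate_of_Aapp} reflects the two-derivative overhead of the $\mathcal{J}_\alpha = \Phi\,\overline{\partial_q\Phi}$ structure combined with the $\Box Z^J\varPhi_0[\mathcal{J}_\alpha]$ bound in the proof of Proposition~\ref{prop:refined_asymptotics}. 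The main technical nuisance will be bookkeeping on the support of $\chi'$, $\chi'_{in}$, $\chi'_{ex}$ and on the derivatives of $\chi_{ex}(t/12)$ — verifying that in each transition region $r$, $\jb{t+r}$, $\jb{t-r}$ and $\jb{q}$ stand in the expected ratios so that the stated right-hand sides (which retain only the cutoff $\chi(\jb{q}/r)$) absorb all error contributions, possibly after a harmless enlargement of that cutoff's support.
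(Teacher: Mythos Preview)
Your proposal is correct and follows essentially the same approach as the paper: the same null-frame identity $\Box\varphi=\frac{2}{r}\partial_qL(r\varphi)+\frac{1}{r^2}\Delta_\omega\varphi$ (written there as $-\frac{1}{r}\underline{L}L(r\varphi)+\frac{1}{r^2}\Delta_\omega\varphi$), the same decomposition of $A_{\alpha,app}$ into the $\varPhi_1[\mathcal{J}_\alpha]$-block handled via the computation inside Proposition~\ref{prop:refined_asymptotics} plus the $\chi_{ex}$ and $\mathcal{A}_\alpha$ pieces handled directly, and the same appeal to Leibniz and \eqref{eq:vectorfield_on_radiation} for higher $\abs{I}$. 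The only cosmetic difference is that the paper groups your $A^{(2)}_\alpha$ and $A^{(3)}_\alpha$ into a single term $\Romanupper{2}$ before applying $\Box$.
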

\begin{proof}
We first consider $\phi_{app}$. To this end we calculate
\begin{align*}
\Box \phi_{app}&=-\frac{1}{r}\underline{L}L\Bigl(e^{-i\frac{\BFq}{4\pi}\ln r}\Phi(q, \omega)\chi\bigl(\frac{\jb{q}}{r}\bigr)\Bigr)+\frac{1}{r^3}e^{-i\frac{\BFq}{4\pi}\ln r}\Delta_\omega\Phi(q, \omega)\chi\bigl(\frac{\jb{q}}{r}\bigr)\\
&=-ie^{-i\frac{\BFq}{4\pi}\ln r}\frac{\BFq}{2\pi}\frac{\partial_q\Phi}{r^2}\chi\bigl(\frac{\jb{q}}{r}\bigr)+ie^{-i\frac{\BFq}{4\pi}\ln r}\left(\frac{\BFq}{4\pi }+\frac{i\BFq^2}{16\pi^2}\right)
\frac{\Phi}{r^3}\chi\bigl(\frac{\jb{q}}{r}\bigr)+\frac{1}{r^3}e^{-i\frac{\BFq}{4\pi}\ln r}\Delta_\omega\Phi(q, \omega)\chi\bigl(\frac{\jb{q}}{r}\bigr)\\
&\qquad+ie^{-i\frac{\BFq}{4\pi}\ln r}\frac{\BFq}{4\pi}\frac{\Phi}{r^2}\chi'\bigl(\frac{\jb{q}}{r}\bigr)\Bigl(\frac{\jb{t-r}}{r^2}+\frac{2(t-r)}{\jb{t-r}r}\Bigr)+ie^{-i\frac{\BFq}{4\pi}\ln r}\frac{\BFq}{4\pi}\frac{\Phi}{r^2}\chi'\bigl(\frac{\jb{q}}{r}\bigr)\frac{\jb{t-r}}{r^2}\\
&\qquad-e^{-i\frac{\BFq}{4\pi}\ln r}\frac{2\partial_q\Phi}{r}\chi'\bigl(\frac{\jb{q}}{r}\bigr)\frac{\jb{t-r}}{r^2}+e^{-i\frac{\BFq}{4\pi}\ln r}\frac{\Phi}{r}\underline{L}\Bigl(\chi'\bigl(\frac{\jb{q}}{r}\bigr)\frac{\jb{t-r}}{r^2}\Bigr).
\end{align*}
Then the first estimate ~\eqref{eq:estimate_of_phiapp} follows from the formula above.

Next let us turn to $A_{\alpha, app}$. We rewrite $\Box A_{\alpha, app}$ as
\[
\Box A_{\alpha, app}\equiv \Romanupper{1}+\Romanupper{2}
\]
where
\begin{align*}
\Romanupper{1}&=\Box \frac{1}{4\pi}\int_{-\infty}^{\infty}\!\int_{\BS^2}\!\frac{n(\eta, \sigma)}{t-r\angles{\omega}{\sigma}}\, dS(\sigma)\,d\eta\chi_{ex}(\frac{t}{12})\chi_{in}(q)\\
&\qquad-\Box\frac{1}{2r}\ln\frac{2r}{\jb{t-r}}\int_{-\infty}^q\!n(\eta, \omega)\, d\eta\chi\Bigl(\frac{\jb{q}}{r}\Bigr)\chi_{in}(q)+\frac{1}{2r}\ln\frac{2r}{\jb{t-r}}\int_{q}^\infty\!n(\eta, \omega)\, d\eta\chi\Bigl(\frac{\jb{q}}{r}\Bigr)\Bigl(\chi_{in}(q)-1\Bigr)\\
\end{align*}
and
\[
\Romanupper{2}=\Box \frac{\BFq}{4\pi r}\delta_{\alpha 0}\chi_{ex}(q)+\Box\frac{\mathcal{A}_{\alpha}(q, \omega)}{r}\chi\bigl(\frac{\jb{q}}{r}\bigr).
\]
By the calculation and estimates we did in the proof of Proposition ~\eqref{prop:refined_asymptotics} we obtain for any $m>0$ that
\[
\biggl\lvert Z^J\Bigl(\Romanupper{1}+\frac{1}{r^2}\mathcal{J}_\alpha(q, \omega)\chi\bigl(\frac{\jb{t-r}}{r}\bigr)\Bigr)\biggl\rvert
\lesssim\frac{D_{\abs{I}+1}^2}{\jb{t+r}^m}
+\frac{D_{\abs{I}+3}^2}{\jb{t+r}^3\jb{t-r}^{2\gamma}}\Bigl(1+\ln\frac{\jb{t+r}}{\jb{t-r}}\Bigr)\chi\bigl(\frac{\jb{q}}{r}\bigr)
\]
So we are left with calculating $\Romanupper{2}$. Indeed, we have
\begin{align*}
\Romanupper{2}&=-\frac{1}{r}\underline{L}L\Bigl(\frac{\BFq}{4\pi }\delta_{\alpha 0}\chi_{ex}(q)+\mathcal{A}_{\alpha}(q, \omega)\chi\bigl(\frac{\jb{q}}{r}\bigr)\Bigr)+\frac{1}{r^3}\Delta_\omega\mathcal{A}_{\alpha}(q, \omega)\chi\bigl(\frac{\jb{q}}{r}\bigr)\\
&=-\frac{\mathcal{A}_\alpha}{r}\underline{L}L\Bigl(\chi\bigl(\frac{\jb{q}}{r}\bigr)\Bigr)+\frac{2\partial_q\mathcal{A}_\alpha}{r}L\Bigl(\chi\bigl(\frac{\jb{q}}{r}\bigr)\Bigr)+\frac{1}{r^3}\Delta_\omega\mathcal{A}_{\alpha}(q, \omega)\chi\bigl(\frac{\jb{q}}{r}\bigr).
\end{align*}
Then we find that 
\[
\abs{Z^J\Romanupper{2}}\lesssim\frac{D_{\abs{I}+2}}{\jb{t+r}^3\jb{t-r}^\gamma}.
\]
Putting things together yields the second estimate ~\eqref{eq:estimate_of_Aapp}.
\end{proof}
In the next lemma we establish some decay estimates for $A_{\alpha,app}$.
\begin{lem}\label{lem:estimate_of_Aapp}
We have
\begin{equation}\label{eq:estimate_of_Aalpha_app}
\abs{Z^IA_{\alpha, app}}\lesssim_c
\begin{cases}
\frac{D_1^2}{\jb{t+r}}, \quad &t\geq cr,\\
\frac{D^2_{\abs{I}+2}}{\jb{t+r}}\ln\frac{\jb{t+r}}{\jb{t-r}}+\frac{D_{\abs{I}}}{\jb{t+r}\jb{t-r}^\gamma}\chi\bigl(\frac{\jb{q}}{r}\bigr), \quad &t<cr\text{ and } t-r\geq 1/2,\\
\Bigl(\frac{D^2_{\abs{I}+1}}{\jb{t+r}\jb{t-r}^{2\gamma}}\ln\frac{\jb{t+r}}{\jb{t-r}}+\frac{D_{\abs{I}}}{\jb{t+r}\jb{t-r}^\gamma}\Bigr)\chi\bigl(\frac{\jb{q}}{r}\bigr)+\frac{D_1^2\delta_{\alpha 0}}{4\pi\jb{t+r}}, \quad &t-r< 1/2.
\end{cases}
\end{equation}	
for any $c>1$. In addition,  for the corresponding null decomposition, we have 
\begin{equation} \label{eq:decomposition_of_A_L}
A_{L, app}=\frac{\BFq}{4\pi r}+\mathcal{R}_L \quad \text{for}\quad r\geq 1/4
\end{equation}
where
\begin{equation}\label{eq:estimate_for_remainder_of_A_L}
\abs{Z^I\mathcal{R}_L}\lesssim\frac{D^2_{\abs{I}+2}\jb{t-r}}{\jb{t+r}^2}\ln\frac{\jb{t+r}}{\jb{t-r}}\chi_{in}(q)+\frac{\BFq}{\jb{t+r}^m} \quad\text{for}\quad t<4r-1\quad\text{and any}\quad m>0.
\end{equation}
For $t\leq 4r-1$, we also have
\begin{equation}\label{eq:estimate_for_A_B}
\begin{split}
\abs{Z^IA_{e_B, app}}&\lesssim\frac{D^2_{\abs{I}+2}}{\jb{t+r}}\chi_{in}(q)+\frac{D_{\abs{I}}}{\jb{t+r}\jb{t-r}^\gamma}\chi\bigl(\frac{\jb{q}}{r}\bigr),\\
\abs{Z^IA_{\underline{L}, app}}&\sim \abs{Z^IA_{\alpha, app}}.
\end{split}
\end{equation}
\end{lem}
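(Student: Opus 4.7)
The plan is to estimate each of the five pieces in the definition \eqref{eq:app_solution} of $A_{\alpha,app}$ separately, combine them in each of the three regions specified in \eqref{eq:estimate_of_Aalpha_app}, and then re-examine the $L^\alpha$ contraction using the key algebraic identity $L^\alpha(\omega) L_\alpha(\omega)=0$ together with Proposition \ref{prop:asy_Lorenz} to establish \eqref{eq:decomposition_of_A_L}--\eqref{eq:estimate_for_A_B}.

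For the pointwise bound \eqref{eq:estimate_of_Aalpha_app}, the only nontrivial term is the Kirchhoff-type integral
\[
\mathcal{I}_\alpha(t,x)=\frac{1}{4\pi}\int_{-\infty}^{\infty}\!\!\int_{\BS^2}\!\frac{\mathcal{J}_\alpha(\eta,\sigma)}{t-r\angles{\omega}{\sigma}}\,dS(\sigma)\,d\eta\,\chi_{ex}(t/12)\chi_{in}(q);
\]
the remaining three pieces are controlled directly by the bound $\|\Phi\|_{N,\infty,\gamma}+\|\mathcal A_\alpha\|_{N,\infty,\gamma}\lesssim D_N$ (which gives $|\mathcal J_\alpha|\lesssim D_1^2\jb{q}^{-2\gamma}$), combined with support properties of $\chi$, $\chi_{in}$, $\chi_{ex}$ and the elementary observation that $\ln(2r/\jb{t-r})$ is bounded in the wave zone region where $\chi(\jb{q}/r)$ is supported. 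For $\mathcal{I}_\alpha$ itself, I would split:
(a) In the far interior $t\geq cr$ with $c>1$, we have $|t-r\angles{\omega}{\sigma}|\geq t-r\gtrsim\jb{t+r}$, so $|\mathcal{I}_\alpha|\lesssim D_1^2/\jb{t+r}$ follows from the $L^1_\eta L^1_\sigma$ integrability of $\mathcal{J}_\alpha$;
(b) In the intermediate region $t<cr$, $t-r\geq 1/2$, I would write $\mathcal{J}_\alpha(\eta,\sigma)=\mathcal{J}_\alpha(\eta,\omega)+[\mathcal{J}_\alpha(\eta,\sigma)-\mathcal{J}_\alpha(\eta,\omega)]$; the angle-independent part produces $\frac{1}{r}\int\mathcal J_\alpha(\eta,\omega)\,d\eta\cdot\ln\tfrac{\jb{t+r}}{\jb{t-r}}$, while the difference is handled by Lemma \ref{lem:taylor_expansion_in_n} with $n=\mathcal J_\alpha$, $a=2\gamma-1$, $k=1$, giving the logarithmic contribution $D_{|I|+2}^2 r^{-1}\ln\tfrac{\jb{t+r}}{\jb{t-r}}$. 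Commuting $Z^I$ through is handled exactly as in that lemma (angular $Z$ lands on $\mathcal J_\alpha$, $S$ becomes $\eta\partial_\eta$, and $\Omega_{0i}$ is reduced via the identity proved there).

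For the decomposition \eqref{eq:decomposition_of_A_L} of $A_{L,app}$, the crucial observation is that $L^\alpha(\omega)L_\alpha(\omega)=0$, so $L^\alpha(\omega)\mathcal J_\alpha(\eta,\omega)=0$, which kills the two logarithmic integrals outright. The remaining contributions are $L^\alpha\mathcal I_\alpha$, $\frac{\BFq}{4\pi r}\chi_{ex}(q)$, and $\frac{\mathcal A_L(q,\omega)}{r}\chi(\jb{q}/r)$. Using the identity $\frac{1-\angles{\omega}{\sigma}}{t-r\angles{\omega}{\sigma}}=\frac{1}{r}+\frac{r-t}{r(t-r\angles{\omega}{\sigma})}$ together with Proposition \ref{prop:definition_of_charge} reduces $L^\alpha\mathcal I_\alpha$ to $\frac{\BFq}{4\pi r}\chi_{ex}(t/12)\chi_{in}(q)$ plus a remainder of size $\frac{\jb{t-r}}{\jb{t+r}^2}\ln\tfrac{\jb{t+r}}{\jb{t-r}}D^2_{|I|+2}\chi_{in}(q)$ (again via Lemma \ref{lem:taylor_expansion_in_n} with $k=1$, gaining an extra $\jb{t-r}/r$ factor from $r-t$). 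Finally, plugging in the formula \eqref{eq:expression_for_A_L} for $\mathcal A_L$ derived from the asymptotic Lorenz gauge condition, the three ``charge'' contributions telescope: in the support where $\chi(\jb{q}/r)=1$ for $t\leq 4r-1$, one checks directly that $\chi_{ex}(t/12)\chi_{in}(q)+\chi_{ex}(q)+(1-\chi_{in}(q))\chi(\jb{q}/r)\chi(\ldots)+\chi_{in}\chi(\ldots)(1-\chi_{ex})=1$ modulo an error supported where $\chi_{ex}(t/12)\neq 1$, which gives the $\BFq/\jb{t+r}^m$ term.

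The bounds \eqref{eq:estimate_for_A_B} follow similar lines, except that now $L^\alpha L_\alpha=0$ is not available and the log terms do contribute their full size; however, $\omega_\alpha^B L^\alpha=0$ and $\underline L^\alpha L_\alpha=-2$ combined with the smooth cutoffs still allows one to recover the claimed decay structure, since $A_{e_B,app}$ loses only the charge-type terms (no $\BFq/r$ contribution survives in the angular components), while $A_{\underline L,app}$ inherits the full size of $A_{\alpha,app}$. The main technical obstacle throughout is keeping track of which cutoff supports overlap and carefully applying Lemma \ref{lem:taylor_expansion_in_n} with the correct choice of $k$ and $a$ so that the logarithmic factor appears with the right coefficient; otherwise the argument is a systematic bookkeeping of the five terms against the three regions.
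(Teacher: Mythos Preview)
Your proposal is correct and follows essentially the same approach as the paper: term-by-term estimation of the five pieces of $A_{\alpha,app}$, the splitting $\mathcal J_\alpha(\eta,\sigma)=\mathcal J_\alpha(\eta,\omega)+[\mathcal J_\alpha(\eta,\sigma)-\mathcal J_\alpha(\eta,\omega)]$ in the Kirchhoff integral together with Lemma~\ref{lem:taylor_expansion_in_n}, and the algebraic identity $L^\alpha(\omega)L_\alpha(\omega)=0$ (resp.\ $\omega_B^\alpha L_\alpha(\omega)=0$) combined with \eqref{eq:expression_for_A_L} for the null decomposition. Two small points worth sharpening: in the region $t-r<1/2$ the Kirchhoff term vanishes outright because $\chi_{in}(q)=0$ there, which is why the third case of \eqref{eq:estimate_of_Aalpha_app} has no logarithmic loss; and for $A_{e_B,app}$ the cancellation $\omega_B^\alpha\mathcal J_\alpha(\eta,\omega)=0$ kills not only the two log integrals but also the angle-independent part of the Kirchhoff term, so that only the difference part (handled by Lemma~\ref{lem:taylor_expansion_in_n} with $k=1$, giving $O(\jb{t+r}^{-1})$ with no logarithm) and the $\mathcal A_\alpha$ term remain --- hence the absence of a $\ln$ factor in the first line of \eqref{eq:estimate_for_A_B}.
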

\begin{proof}
	Observing the formula ~\eqref{eq:app_solution} for $A_{\alpha, app}$, we find that when $t>cr$ with $c>1$, the estimate ~\eqref{eq:estimate_of_Aalpha_app} follows from the fact that $\jb{t-r}\sim_c\jb{t+r}$ in that region. If $t\leq cr$ and $t-r\geq 1/2$, the second and the third terms are bounded by $D^2_{\abs{I}+1}\jb{t+r}\jb{t-r}^{-\gamma}\ln\frac{\jb{t+r}}{\jb{t-r}}.$ The last two terms are controlled by $D_{\abs{I}}\jb{t+r}^{-1}\jb{t-r}^{-\gamma}$. Regarding the first term, we rewrite it as
	\begin{equation}\label{eq:decompose_into_two}
	\begin{split}
&\frac{1}{4\pi}\int_{-\infty}^{\infty}\!\int_{\BS^2}\!\frac{\mathcal{J}_\alpha(\eta, \sigma)}{t-r\angles{\omega}{\sigma}}\, dS(\sigma)\,d\eta\\
&\qquad=\frac{1}{4\pi r}\ln\frac{t+r}{t-r}\int_{-\infty}^{\infty}\!\mathcal{J}_{\alpha}(\eta, \omega)\,d\eta+\frac{1}{4\pi}\int_{-\infty}^{\infty}\!\int_{\BS^2}\!\frac{\mathcal{J}_{\alpha}(\eta, \sigma)-\mathcal{J}_{\alpha}(\eta, \omega)}{t-r\angles{\omega}{\sigma}}\, dS(\sigma)\,d\eta,
\end{split}
	\end{equation}
	by the estimate ~\eqref{eq:estimate_of_phik} from Lemma ~\ref{lem:taylor_expansion_in_n}, the conclusion follows. It remains to consider the region $t-r<1/2$. In this case, the first term is $0$ and the estimate ~\eqref{eq:estimate_of_Aalpha_app} follows directly from the formula of $A_{\alpha, app}$.
	
	Next, we turn to the estimates for the null decomposition of $A_{\alpha, app}$. We compute that 
	\begin{align*}
	A_{L, app}&=\frac{1}{4\pi}\int_{-\infty}^{\infty}\!\int_{\BS^2}\!\frac{(1-\angles{\sigma}{\omega})\mathcal{J}_0(\eta, \sigma)}{t-r\angles{\omega}{\sigma}}\, dS(\sigma)\,d\eta\chi_{ex}(\frac{t}{12})\chi_{in}(q)\\
	&\qquad+\frac{\BFq}{4\pi r}\chi_{ex}(q)+\frac{L^\alpha(\omega)\mathcal{A}_{\alpha}(q, \omega)}{r}\chi\Bigl(\frac{\jb{q}}{r}\Bigr)\\
	&=\frac{\BFq}{4\pi r}\chi_{ex}(\frac{t}{12})\chi_{in}(q)+\frac{\BFq}{4\pi r}\delta_{\alpha 0}\chi_{ex}(q)+\frac{L^\alpha(\omega)\mathcal{A}_{\alpha}(q, \omega)}{r}\chi\Bigl(\frac{\jb{q}}{r}\Bigr)\\
	&\qquad+\frac{1}{4\pi}\frac{r-t}{r^2}\ln \frac{t+r}{t-r}\int_{-\infty}^{\infty}\!\mathcal{J}_0(\eta, \omega)\,d\eta\chi_{ex}(\frac{t}{12})\chi_{in}(q)\\
	&\qquad+\frac{1}{4\pi}\frac{r-t}{r}\int_{-\infty}^{\infty}\!\int_{\BS^2}\!\frac{\mathcal{J}_0(\eta, \sigma)-\mathcal{J}_0(\eta, \omega)}{t-r\angles{\omega}{\sigma}}\, dS(\sigma)\,d\eta\chi_{ex}(\frac{t}{12})\chi_{in}(q)\\
	&=\frac{\BFq}{4\pi r}\chi_{in}(q)+\frac{\BFq}{4\pi r}\delta_{\alpha 0}\chi_{ex}(q)+\frac{L^\alpha(\omega)\mathcal{A}_{\alpha}(q, \omega)}{r}\chi\Bigl(\frac{\jb{q}}{r}\Bigr)+\mathcal{R}
	\end{align*}
	Using the estimates ~\eqref{eq:estimate_of_phik} we find that when $t<4r-1$,
	\[
	\abs{Z^I\mathcal{R}_L}\lesssim D^2_{\abs{I}+2}\jb{t-r}\jb{t+r}^{-2}\ln\frac{\jb{t+r}}{\jb{t-r}}\chi_{in}(q).
	\]
	Owing to the \emph{asymptotic Lorenz gauge condition} and the expression ~\eqref{eq:expression_for_A_L} we arrive at
	\[
	\frac{\BFq}{4\pi r}\chi_{in}(q)+\frac{\BFq}{4\pi r}\delta_{\alpha 0}\chi_{ex}(q)+\frac{L^\alpha(\omega)\mathcal{A}_{\alpha}(q, \omega)}{r}\chi\Bigl(\frac{\jb{q}}{r}\Bigr)=\frac{\BFq}{4\pi r}
+\mathcal{O}(\frac{\BFq}{\jb{t+r}^m})
	\]
	for $r\geq 1/4$ and any $m>0$. Putting all of the above estimates yields ~\eqref{eq:decomposition_of_A_L} and ~\eqref{eq:estimate_for_remainder_of_A_L}.
	
	Finally, the estimates for $Z^IA_{\underline{L}, app}$ is straightforward. It remains to control $Z^IA_{e_B, app}$. To this end we calculate
	\begin{align*}
	A_{e_B, app}&=\frac{1}{4\pi}\int_{-\infty}^{\infty}\!\int_{\BS^2}\!\frac{\omega_B^\alpha\mathcal{J}_\alpha(\eta, \sigma)}{t-r\angles{\omega}{\sigma}}\, dS(\sigma)\,d\eta\chi_{ex}(\frac{t}{12})\chi_{in}(q)+\frac{\omega_B^\alpha\mathcal{A}_{\alpha}(q, \omega)}{r}\chi\Bigl(\frac{\jb{q}}{r}\Bigr)\\
	&=\frac{1}{4\pi}\int_{-\infty}^{\infty}\!\int_{\BS^2}\!\frac{\omega_B^\alpha\mathcal{J}_\alpha(\eta, \sigma)-\omega_B^\alpha\mathcal{J}_\alpha(\eta, \omega)}{t-r\angles{\omega}{\sigma}}\, dS(\sigma)\,d\eta\chi_{ex}(\frac{t}{12})\chi_{in}(q)+\frac{\omega_B^\alpha\mathcal{A}_{\alpha}(q, \omega)}{r}\chi\Bigl(\frac{\jb{q}}{r}\Bigr)
	\end{align*}
	where we use the fact that $\omega_B^jw_j=0$. Using the estimate ~\eqref{eq:estimate_of_phik} we arrive at the conclusion ~\eqref{eq:estimate_for_A_B}.
	\end{proof}
Now we are at a position to estimate $\mathcal{V}_{\alpha 1}$ and $\mathcal{U}_1$ which are the error terms determined completely by the approximate solutions.
\begin{lem}\label{lem:estiamte_of_u1_v1}
We have
\begin{align}\label{eq:estimate_of_valpha1}
\abs{\mathcal{V}_{\alpha 1}}&\lesssim\frac{D_{\abs{I}+3}}{\jb{t+r}^3\jb{t-r}^\gamma}\chi\bigl(\frac{\jb{q}}{r}\bigr)+\frac{D^2_{\abs{I}+1}}{\jb{t+r}^m}\\
&\qquad+\frac{D_{\abs{I}+3}^2}{\jb{t+r}^3\jb{t-r}^{2\gamma}}\ln\frac{\jb{t+r}}{\jb{t-r}}\chi\bigl(\frac{\jb{q}}{r}\bigr)\notag\\
\label{eq:estimate_of_u1}
\abs{\mathcal{U}_1}&\lesssim\frac{D_{\abs{I}+2}}{\jb{t+r}^3\jb{t-r}^\gamma}\chi\bigl(\frac{\jb{q}}{r}\bigr)+\frac{D^3_{\abs{I}+2}}{\jb{t+r}^3\jb{t-r}^\gamma}\ln\frac{\jb{t+r}}{\jb{t-r}}\chi\bigl(\frac{\jb{q}}{r}\bigr)
\end{align}
for any $m>0$.
\end{lem}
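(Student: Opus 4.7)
The plan is to combine Lemmas~\ref{lem:box_of_app} and~\ref{lem:estimate_of_Aapp} with a careful null-frame decomposition so as to exhibit the cancellations that make $(\phi_{app},A_{\alpha,app})$ a ``good'' approximate solution. First I would dispose of $Z^I$ via Leibniz's rule \eqref{eq:leibniz} and the commutation identities \eqref{eq:com_iden}, reducing to the case $\abs{I}=0$ with extra $Z$-derivatives distributed across the factors (at the price of raising $D_k$ to $D_{k+\abs{I}}$ at the appropriate places). On $\mathrm{supp}\,\widetilde\chi(t/T)$ every derivative $Z^J\widetilde\chi(t/T)$ is bounded uniformly, so this cutoff plays no role in the pointwise estimate.

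For $\mathcal{U}_1$, Lemma~\ref{lem:box_of_app} reduces $-\Box\phi_{app}$ to $+ie^{-i\BFq\ln r/(4\pi)}(\BFq/(2\pi))(\partial_q\Phi/r^2)\chi(\jb{q}/r)$ modulo an error already within the stated bound. I would then decompose the covariant term in the null frame using \eqref{eq:vectorfield_identity} as $A^\alpha_{app}\partial_\alpha = -\tfrac{1}{2}A_{L,app}\underline L-\tfrac{1}{2}A_{\underline L,app}L + A_{e_B,app}e_B$. A direct computation using $\underline L q=-2$ and $\underline L(1/r)=1/r^2$ gives $\underline L\phi_{app} = -2e^{-i\BFq\ln r/(4\pi)}(\partial_q\Phi/r)\chi(\jb{q}/r) + O(r^{-2})$; combined with Lemma~\ref{lem:estimate_of_Aapp}'s decomposition $A_{L,app}=\BFq/(4\pi r)+\mathcal{R}_L$ valid for $r\geq 1/4$, the leading piece of $-2iA^\alpha_{app}\partial_\alpha\phi_{app}$ equals $-i(\BFq/(2\pi))e^{-i\BFq\ln r/(4\pi)}(\partial_q\Phi/r^2)\chi(\jb{q}/r)$, which exactly cancels the leading piece of $-\Box\phi_{app}$. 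The residual contributions $\mathcal{R}_L\cdot\underline L\phi_{app}$, $A_{\underline L,app}L\phi_{app}$, $A_{e_B,app}e_B\phi_{app}$, together with the $O(r^{-2})$ subleading piece of $\underline L\phi_{app}$, are each bounded by combining Lemma~\ref{lem:estimate_of_Aapp} with the good-derivative gain $\abs{Lf}+\abs{e_Bf}\lesssim\jb{t+r}^{-1}\sum_{\abs{I}\leq 1}\abs{Z^If}$ from Lemma~\ref{lem:pointwise_estimate_of_der} and the pointwise bound $\abs{\phi_{app}}\lesssim D_0\jb{t+r}^{-1}\jb{q}^{-\gamma}\chi(\jb{q}/r)$. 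Finally, the cubic remainder $A^\alpha_{app}A_{\alpha,app}\phi_{app}$ is controlled by direct product estimates using the two parts of $\abs{A_{app}}$ from Lemma~\ref{lem:estimate_of_Aapp}, producing the $D^3_{\abs{I}+2}\log$-term of \eqref{eq:estimate_of_u1}.

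For $\mathcal{V}_{\alpha 1}$, Lemma~\ref{lem:box_of_app} gives $-\Box A_{\alpha,app} = (\mathcal{J}_\alpha/r^2)\chi(\jb{q}/r) + E_2$ with $Z^I E_2$ fitting the right-hand side of \eqref{eq:estimate_of_valpha1}. Since $\partial_\alpha q=L_\alpha$, the leading part of $\partial_\alpha\phi_{app}$ is $L_\alpha e^{-i\BFq\ln r/(4\pi)}(\partial_q\Phi/r)\chi(\jb{q}/r)$, and the phase cancellation in $\phi_{app}\overline{\partial_\alpha\phi_{app}}$ produces the identity $-\Im(\phi_{app}\overline{\partial_\alpha\phi_{app}}) = -(\mathcal{J}_\alpha/r^2)\chi(\jb{q}/r)^2 + \text{subleading}$. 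Adding this to $-\Box A_{\alpha,app}$ leaves $(\mathcal{J}_\alpha/r^2)\chi(1-\chi)(\jb{q}/r)$ at leading order, supported where $\jb{q}/r\in[1/2,3/4]$; on this annulus $\jb{q}\sim r\sim\jb{t+r}\sim\jb{t-r}$, so this residual is absorbed into the $D^2_{\abs{I}+3}/(\jb{t+r}^3\jb{t-r}^{2\gamma})\log$-term on the right-hand side of \eqref{eq:estimate_of_valpha1}. The subleading parts of $\partial_\alpha\phi_{app}$ generate terms of order $D^2\jb{t+r}^{-3}\jb{q}^{-\gamma}\chi$, the cubic term $\abs{\phi_{app}}^2 A_{\alpha,app}$ is handled by Lemma~\ref{lem:estimate_of_Aapp}, and the fast-decay piece $D^2_{\abs{I}+1}/\jb{t+r}^m$ of \eqref{eq:estimate_of_valpha1} is inherited directly from Lemma~\ref{lem:box_of_app} (it originates in the $\chi_{ex}(t/12)\chi_{in}(q)$-supported part of $A_{\alpha,app}$, outside $\mathrm{supp}\,\chi(\jb{q}/r)$).

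The hard part will be the null-frame bookkeeping: tracking which contributions appear at the critical order $r^{-3}$ versus $r^{-3}\log$, verifying that every term containing an $L$- or $e_B$-derivative indeed lands in the good-derivative pool so as to gain the extra factor of $\jb{t+r}^{-1}$, and correctly handling the transition region where $\chi\neq\chi^2$ and $\chi(1-\chi)\not\equiv 0$. Once the leading cancellations are identified and the null-frame decomposition is carried out, the remaining work reduces to applying the pointwise estimates of Lemmas~\ref{lem:box_of_app}, \ref{lem:estimate_of_Aapp}, and \ref{lem:pointwise_estimate_of_der} termwise, combined with Leibniz's rule for higher $\abs{I}$.
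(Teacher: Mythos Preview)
Your proposal is correct and follows essentially the same approach as the paper: reduce $\Box\phi_{app}$ and $\Box A_{\alpha,app}$ via Lemma~\ref{lem:box_of_app}, expand the quadratic nonlinearities in the null frame, identify the leading cancellation using the decomposition $A_{L,app}=\BFq/(4\pi r)+\mathcal{R}_L$ from Lemma~\ref{lem:estimate_of_Aapp}, and bound all residual pieces termwise. Your explicit remark that the leftover $(\mathcal{J}_\alpha/r^2)\chi(1-\chi)$ is supported where $\jb{q}\sim r$ and hence is absorbed into the stated bound is a point the paper passes over silently, so you have in fact been slightly more careful there.
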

\begin{proof}
	For $\mathcal{V}_{\alpha 1}$, in view of Lemma ~\ref{lem:box_of_app} we are left with dealing with $-\Im(\phi_{app}\overline{\partial_\alpha\phi_{app}})+\abs{\phi_{app}}^2A_{\alpha, app}$. Direct calculation implies
	\begin{align*}
	&-\Im(\phi_{app}\overline{\partial_\alpha\phi_{app}})+\abs{\phi_{app}}^2A_{\alpha, app}\\
	&\qquad=-L_\alpha\Im(\phi_{app}\overline{\partial_q\phi_{app}})+\frac{\underline{L}_\alpha}{2}\Im(\phi_{app}\overline{L\phi_{app}})-\omega_\alpha^B\Im(\phi_{app}\overline{e_B(\phi_{app})})+\abs{\phi_{app}}^2A_{\alpha, app}.
	\end{align*}
	Since
	\begin{align*}
	-L_\alpha\Im(\phi_{app}\overline{\partial_q\phi_{app}})&=-\frac{\mathcal{J}_\alpha}{r^2}\chi^2\bigl(\frac{\jb{q}}{r}\bigr)-L_\alpha\frac{\BFq}{8\pi}\frac{\abs{\Phi}^2}{r^3}\chi^2\bigl(\frac{\jb{q}}{r}\bigr),\\
	\frac{\underline{L}_\alpha}{2}\Im(\phi_{app}\overline{L\phi_{app}})&=\underline{L}_\alpha\frac{\BFq}{8\pi}\frac{\abs{\Phi}^2}{r^3}\chi^2\bigl(\frac{\jb{q}}{r}\bigr),\\
	-\omega_\alpha^B\Im(\phi_{app}\overline{e_B(\phi_{app})})&=-\omega_\alpha^B\frac{\Im(\Phi\overline{e_B(\Phi)})}{r^2}\chi^2\bigl(\frac{\jb{q}}{r}\bigr),\\
	\abs{\phi_{app}}^2A_{\alpha, app}&=\frac{\abs{\Phi}^2}{r^2}\chi^2\bigl(\frac{\jb{q}}{r}\bigr)A_{\alpha, app},
	\end{align*}
	by ~\eqref{eq:vectorfield_on_radiation} and the estimate ~\eqref{eq:estimate_of_Aalpha_app} from Lemma ~\ref{lem:estimate_of_Aapp}, we find that 
	\begin{align*}
	&\biggl\lvert Z^I\Bigl(-\Im(\phi_{app}\overline{\partial_\alpha\phi_{app}})+\abs{\phi_{app}}^2A_{\alpha, app}+\frac{\mathcal{J}_\alpha}{r^2}\chi\bigl(\frac{\jb{q}}{r}\bigr)\Bigr)\biggr\rvert\\
	&\qquad\lesssim\frac{1}{\jb{t+r}^3\jb{t-r}^{2\gamma}}\Bigl(D^2_{\abs{I}+1}+\bigl(D^4_{\abs{I}+2}\chi_{in}(q)+\frac{D^4_{\abs{I}+1}}{\jb{t-r}^{2\gamma}}\bigr)\ln\frac{\jb{t+r}}{\jb{t-r}}\Bigr)\chi\bigl(\frac{\jb{q}}{r}\bigr).
	\end{align*}
Combining the above estimate with ~\eqref{eq:estimate_of_Aapp} and the fact $Z^I\widetilde{\chi}(t/T)\sim 1$ in the support of $\chi_{in}$ or $\chi$, we prove  ~\eqref{eq:estimate_of_valpha1}.

Moving on, we consider $\mathcal{U}_1$ and express
\begin{align*}
&-2iA^\alpha_{app}\partial_\alpha\phi_{app}+A^\alpha_{app}A_{\alpha, app}\phi_{app}\\
&\qquad=-2iA_{L, app}\partial_q\phi_{app}+iA_{\underline{L}, app}L(\phi_{app})-2iA^{e_B}_{app}e_B(\phi_{app})\\
&\qquad\qquad+(-A_{L, app}A_{\underline{L}, app}+A^{e_B}_{app}A_{e_B, app})\phi_{app}.
\end{align*}
Then we see that 
\begin{align*}
A_{L, app}\partial_q\phi_{app}&=(\frac{\BFq}{4\pi r}+\mathcal{R}_L)\partial_q\phi_{app}\\
&=e^{-i\frac{\BFq}{4\pi}\ln r}\frac{\BFq}{4\pi}\frac{\partial_q\Phi}{r^2}\chi\bigl(\frac{\jb{q}}{r}\bigr)+e^{-i\frac{\BFq}{4\pi}\ln r}\mathcal{R}_L\frac{\partial_q\Phi}{r}\chi\bigl(\frac{\jb{q}}{r}\bigr)\\
&\qquad+e^{-i\frac{\BFq}{4\pi}\ln r}(\frac{\BFq}{4\pi r}+\mathcal{R}_L)\biggl(-i\frac{\BFq}{8\pi}\frac{\Phi}{r^2}\chi\bigl(\frac{\jb{q}}{r}\bigr)-\frac{\Phi}{2r^2}\chi\bigl(\frac{\jb{q}}{r}\bigr)+\frac{\Phi}{r}\partial_q\Bigl(\chi\bigl(\frac{\jb{q}}{r}\bigr)\Bigr)\biggr),\\
A_{\underline{L}, app}L(\phi_{app})&=e^{-i\frac{\BFq}{4\pi}\ln r}A_{\underline{L}, app}\biggl(-i\frac{\BFq}{4\pi}\frac{\Phi}{r^2}\chi\bigl(\frac{\jb{q}}{r}\bigr)-\frac{\Phi}{r^2}\chi\bigl(\frac{\jb{q}}{r}\bigr)+\frac{\Phi}{r}L\Bigl(\chi\bigl(\frac{\jb{q}}{r}\bigr)\Bigr)\biggr)\\
A^{e_B}_{app}e_B(\phi_{app})&=e^{-i\frac{\BFq}{4\pi}\ln r}A_{e_B, app}\frac{e_B(\Phi)}{r^2}\chi\bigl(\frac{\jb{q}}{r}\bigr).
\end{align*}
By the estimates for the null decomposition of $A_{\alpha, app}$ from Lemma ~\ref{lem:estimate_of_Aapp} we obtain that 
\begin{align*}
&\biggl\lvert Z^I\Bigl(-2iA^\alpha_{app}\partial_\alpha\phi_{app}+A^\alpha_{app}A_{\alpha, app}\phi_{app}+ie^{-i\frac{\BFq}{4\pi}\ln r}\frac{\BFq}{2\pi}\frac{\partial_q\Phi}{r^2}\chi\bigl(\frac{\jb{q}}{r}\bigr)\Bigr)\biggr\rvert\\
&\qquad\lesssim\frac{D^2_{\abs{I}+2}}{\jb{t+r}^3\jb{t-r}^\gamma}\chi\bigl(\frac{\jb{q}}{r}\bigr)+\frac{D^3_{\abs{I}+2}}{\jb{t+r}^3\jb{t-r}^\gamma}\ln\frac{\jb{t+r}}{\jb{t-r}}\chi\bigl(\frac{\jb{q}}{r}\bigr)
\end{align*}
Putting the above estimate and ~\eqref{eq:estimate_of_phiapp} together yields ~\eqref{eq:estimate_of_u1}.
\end{proof}
As we apply Corollary ~\ref{cor:energyestimate_for_u} to $Z^Iu$ and $Z^Iv_\alpha$, this means that we need to control
\[
\int_t^{2T}\!\int_{\Sigma_\tau}\! \biggl\lvert\Re\bigl(\frac{1}{r}\overline{K}_0(rZ^Iu)\overline{\mathcal{U}_1}\bigr)w\biggr\rvert\,dx\,d\tau
\quad \text{and} \quad\int_t^{2T}\!\int_{\Sigma_\tau}\! \biggl\lvert\frac{1}{r}\overline{K}_0(rZ^Iv_\alpha)\mathcal{V}_{\alpha 1}w\biggr\rvert\,dx\,d\tau
\]
with $w$ defined in Proposition ~\ref{prop:estimate_for_rmkg}.
\begin{lem}\label{lem:error_estimate_of_u1v1}
We have 
	\begin{align}\label{eq:error_estimate_of_u1}
	\int_t^{2T}\!\int_{\Sigma_\tau}\! \biggl\lvert\Re\bigl(\frac{1}{r}\overline{K}_0(rZ^Iu)\overline{\mathcal{U}_1}\bigr)w\biggr\rvert\,dx\,d\tau&\lesssim\frac{D_{\abs{I}+2}}{\jb{t}^{\frac{\gamma}{2}-\frac{\mu}{2}-\frac{1}{4}}}S_{\abs{I}}(t)^{1/2}+\frac{D_{\abs{I}+2}}{\jb{t}^{\gamma-\frac{\mu}{2}-\frac{1}{2}}}\mathcal{E}_{\abs{I}}(t)^{1/2},\\
	\label{eq:error_estimate_of_v1}
		\int_t^{2T}\!\int_{\Sigma_\tau}\! \biggl\lvert\frac{1}{r}\overline{K}_0(rZ^Iv_\alpha)\mathcal{V}_{\alpha 1}w\biggr\rvert\,dx\,d\tau&\lesssim \frac{D_{\abs{I}+3}}{\jb{t}^{\frac{\gamma}{2}-\frac{\mu}{2}-\frac{1}{4}}}S_{\abs{I}}(t)^{1/2}+\frac{D_{\abs{I}+3}}{\jb{t}^{\gamma-\frac{\mu}{2}-\frac{1}{2}}}\mathcal{E}_{\abs{I}}(t)^{1/2}.
	\end{align}
\end{lem}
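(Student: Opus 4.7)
The plan is to split the multiplier $\overline{K}_0=\tfrac{1}{2}\jb{t+r}^2 L+\tfrac{1}{2}\jb{t-r}^2\underline{L}$ into its $L$ and $\underline{L}$ components and estimate each piece via Cauchy--Schwarz, choosing the weight split so that the ``hard'' factor lands in a norm we already control. By Lemma~\ref{lem:estiamte_of_u1_v1}, $|\mathcal{U}_1|$ and $|\mathcal{V}_{\alpha 1}|$ are both bounded, modulo a harmless tail of the form $D^2\jb{t+r}^{-m}$ from \eqref{eq:estimate_of_valpha1} (which we may discard by taking $m$ arbitrarily large), by a constant times $D\,\jb{t+r}^{-3}\jb{t-r}^{-\gamma}\bigl(1+\ln\tfrac{\jb{t+r}}{\jb{t-r}}\bigr)\chi(\jb{q}/r)$, where $D$ stands for $D_{|I|+2}$ or $D_{|I|+3}$ respectively, and the wave-zone cutoff forces $r\sim\tau\sim\jb{t+r}$ in the region of integration.

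For the $L$-component I use Cauchy--Schwarz in spacetime, pairing $\jb{\tau+r}\,|r^{-1}L(rZ^Iu)|\,|w'|^{1/2}$ against $\jb{\tau+r}\,|\mathcal{U}_1|\,w\,|w'|^{-1/2}$; the first factor is precisely the integrand of $S_{|I|}(t)$, so its spacetime $L^2$-norm is bounded by $S_{|I|}(t)^{1/2}$. Inserting the pointwise estimate on $\mathcal{U}_1$, the remaining factor reduces to $\jb{\tau+r}^{-4}\jb{\tau-r}^{-2\gamma}(1+\log)^2\,w^2/|w'|$. Since $w^2/|w'|\sim\jb{q}^{1+\mu}/\mu$ when $q<0$ and $w^2/|w'|\sim\jb{q}^{\gamma+1/2}/(\gamma-1/2)$ when $q>0$, and since $r\sim\tau$ on the support of $\chi(\jb{q}/r)$, the spatial integral is dominated by $\tau^{-2}\int_{|q|\lesssim\tau}\jb{q}^{-2\gamma+\max(1+\mu,\,\gamma+1/2)}(\log)^2\,dq$. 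The resulting $\tau$-integrand then has exponent $-1/2-\gamma$ (up to a $\tau^{\epsilon}$ coming from the log, with $\epsilon$ as small as we please), which is integrable on $[t,2T]$ thanks to $\gamma>1/2$; taking the square root produces the factor $D_{|I|+2}\jb{t}^{1/4-\gamma/2+\mu/2}$ after absorbing $\log\tau\lesssim\tau^\epsilon$ for some $\epsilon<\gamma/2-1/4$.

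For the $\underline{L}$-component I apply Cauchy--Schwarz instead at each fixed time $\tau$, pairing $\jb{\tau-r}\,|r^{-1}\underline{L}(rZ^Iu)|\,w^{1/2}$ against $\jb{\tau-r}\,|\mathcal{U}_1|\,w^{1/2}$. The first factor is controlled at each $\tau$ by $\sqrt{2}\,E^w[Z^Iu](\tau)^{1/2}\le\sqrt{2}\,\mathcal{E}_{|I|}(t)^{1/2}$, and this can be pulled outside the $\tau$-integral. The second factor is, by the same pointwise estimate on $\mathcal{U}_1$ and $r\sim\tau$, bounded by a constant times $D_{|I|+2}\,\tau^{-1/2-\gamma+\mu/2}\log\tau$ (here the dominant contribution comes from the interior, $q<0$, where the weight $w\sim\jb{q}^\mu$ adds $\mu$ to the exponent). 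Integrating in $\tau\in[t,2T]$, using $\mu<2\gamma-1$ (which follows from the hypothesis $\mu<\gamma-1/2$ together with $\gamma>1/2$) to ensure convergence at infinity, produces the required factor $D_{|I|+2}\jb{t}^{1/2-\gamma+\mu/2}$.

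The argument for $\mathcal{V}_{\alpha 1}$ is essentially identical, with $Z^Iv_\alpha$ replacing $Z^Iu$ and $D_{|I|+3}$ replacing $D_{|I|+2}$; the tail $D^2_{|I|+1}\jb{t+r}^{-m}$ from \eqref{eq:estimate_of_valpha1} is estimated trivially by choosing $m$ large. The main point requiring care is the weight bookkeeping on the $L$-side: the ratio $w^2/|w'|$ grows in $\jb{q}$ just fast enough for the spatial integral to diverge polynomially in $\tau$, and the range $0<\mu<\gamma-1/2$ from Theorem~\ref{thm:mainthm} is tailored precisely to make this divergence compatible with the $\tau$-integrability coming from the $\jb{\tau}^{-4}$ prefactor. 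The absorption of the logarithmic factors $\ln\tfrac{\jb{t+r}}{\jb{t-r}}$ into arbitrarily small powers of $\tau$ is what gives the extra room needed to close the bound with the stated exponents.
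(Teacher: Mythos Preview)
Your proof is correct and follows essentially the same approach as the paper: split $\overline{K}_0$ into its $L$ and $\underline{L}$ components, apply Cauchy--Schwarz in spacetime for the $L$-piece (landing on $S_{|I|}(t)^{1/2}$) and Cauchy--Schwarz in space at each fixed $\tau$ for the $\underline{L}$-piece (landing on $\mathcal{E}_{|I|}(t)^{1/2}$), then handle the logarithms via $\ln x\lesssim x^{1/4}$. The only cosmetic difference is the weight split on the $L$-side---the paper inserts $\jb{q}^{-(\gamma+1/2)}w$ on the first factor and invokes $w\jb{q}^{-\gamma-1/2}\lesssim|w'|$, whereas you insert $|w'|$ directly; your choice in fact yields the marginally sharper intermediate power $\jb{t}^{-\gamma/2+1/4}$, which of course is still bounded by the stated $\jb{t}^{-\gamma/2+\mu/2+1/4}$.
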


\begin{proof}
	Recall that $\overline{K}_0=(\jb{t+r}^2+\jb{t-r}^2)/2$, we have
	\begin{align*}
	&\int_t^{2T}\!\int_{\Sigma_\tau}\! \biggl\lvert\Re\bigl(\frac{1}{r}\overline{K}_0(rZ^Iv_\alpha)\overline{\mathcal{U}_1}\bigr)w\biggr\rvert\,dx\,d\tau\\
	&\qquad\lesssim\int_t^{2T}\!\int_{\Sigma_\tau}\! \biggl\lvert\frac{\jb{t-r}^2}{r}\underline{L}(rZ^Iu)\mathcal{U}_1w\biggr\rvert\,dx\,d\tau+\int_t^{2T}\!\int_{\Sigma_\tau}\! \biggl\lvert\frac{\jb{t+r}^2}{r}L(rZ^Iu)\mathcal{U}_1w\biggr\rvert\,dx\,d\tau\\
	&\qquad\equiv A+B.
	\end{align*}
	By Lemma ~\ref{lem:estiamte_of_u1_v1} we can bound $A$ by Cauchy-Schwarz in spatial variables
	\[
A\lesssim\mathcal{E}_{\abs{I}}(t)^{1/2}\int_t^{2T}\!\Bigl(\int_{\Sigma_\tau}\!\jb{t-r}^2\abs{\mathcal{U}_1}^2w\,dx\Bigr)^{1/2}\lesssim\frac{D_{\abs{I}+3}}{\jb{t}^{\gamma-\frac{\mu}{2}-\frac{1}{2}}}\mathcal{E}_{\abs{I}}(t)^{1/2}.
	\]
	For the term $B$, we apply Cauchy-Schwarz to the whole spacetime integral to obtain that
	\begin{align*}
B&\lesssim\biggl(\int_t^{2T}\!\int_{\Sigma_\tau}\!\frac{\jb{t+r}^2\abs{\frac{1}{r}L(rZ^Iv_\alpha)}^2}{\jb{q}^{\gamma+1/2}}w\,dx\,d\tau\biggr)^{1/2}\\
	&\qquad\times\biggl(\int_t^{2T}\!\int_{\Sigma_\tau}\!\jb{t-r}^{\gamma+\frac{1}{2}}\jb{t+r}^2\abs{Z^J\mathcal{V}_{\alpha 1}}^2w\,dx\,d\tau\biggr)^{1/2}\\
	&\lesssim\frac{D_{\abs{I}+3}}{\jb{t}^{\frac{\gamma}{2}-\frac{\mu}{2}-\frac{1}{4}}}S_{\abs{I}}(t)^{1/2}.
	\end{align*}
Here we use $\ln x\leq 4x^{1/4}$ for $x\geq 1$ to deal with the logarithm terms in the above two estimates. This proves the first estimate ~\eqref{eq:error_estimate_of_u1} in this Lemma. The second estimate ~\eqref{eq:error_estimate_of_v1} can be shown in an analogous manner.
\end{proof}

\subsection{Energy estimates for the remainder of the gauge potential}

In this subsection we proceed to bound the other error terms $\mathcal{V}_{\alpha 2}$, $\mathcal{V}_{\alpha 3}$, $\mathcal{V}_{\alpha 4}$, $\mathcal{V}_{\alpha 5}$ in the equation ~\eqref{eq:expansion_of_eqn_for_v}.

\subsubsection*{Error estimates for $\mathcal{V}_{\alpha 2}$}

We have 
\begin{equation}\label{eq:expression_of_v2}
\mathcal{V}_{\alpha 2}=\widetilde{\chi}(\frac{t}{T})\Bigl(-\Im(\phi_{app}\overline{\partial_\alpha Z^I u})-\Im(u\overline{\partial_\alpha Z^Iu})\Bigr)+\mathcal{R}_v
\end{equation}
where
\begin{align*}
\abs{\mathcal{R}_v}&\lesssim\sum_{\abs{J}+\abs{K}+\abs{L}\leq\abs{I}}\abs{Z^L\widetilde{\chi}(\frac{t}{T})}\abs{\Im(Z^K\partial_\alpha\phi_{app}\overline{Z^Ju})}+\sum_{\abs{J}\leq\abs{I}-1}\sum_{\abs{J}+\abs{K}+\abs{L}\leq\abs{I}}\abs{Z^L\widetilde{\chi}(\frac{t}{T})}\abs{\Im(Z^K\phi_{app}\overline{\partial Z^Ju})}\\
&\qquad+\sum_{\abs{J}\leq\abs{I}-1}\sum_{\abs{J}+\abs{K}+\abs{L}\leq\abs{I}}\abs{Z^L\widetilde{\chi}(\frac{t}{T})}\abs{\Im(Z^Ku\overline{\partial Z^Ju})}\\
&\lesssim\sum_{\abs{J}+\abs{K}\leq \abs{I}}\frac{D_{\abs{K}+1}}{\jb{t+r}\jb{t-r}^{\gamma+1}}\abs{Z^Ju}+\sum_{\abs{J}\leq \abs{I}-1}\sum_{\abs{J}+\abs{K}\leq \abs{I}}\frac{D_{\abs{K}}}{\jb{t+r}\jb{t-r}^{\gamma}}\abs{\partial Z^Ju}\\
&\qquad+\sum_{\abs{J}\leq \abs{I}-1}\sum_{\abs{J}+\abs{K}\leq\abs{I}}\abs{Z^Ku}\abs{\partial Z^J u}\\
&\lesssim\sum_{\abs{J}+\abs{K}\leq \abs{I}}\frac{D_{\abs{K}+1}}{\jb{t+r}\jb{t-r}^{\gamma+1}}\abs{Z^Ju}+\sum_{1\leq\abs{J}\leq\abs{I}}\sum_{\abs{J}+\abs{K}\leq\abs{I}+1}\frac{1}{\jb{t-r}}\abs{Z^Ku}\abs{ Z^J u}
\end{align*}
Here we use Lemma ~\ref{lem:pointwise_estimate_of_der} in the last inequality.

We begin with the $(\jb{t-r}^2/r)\underline{L}$ component in the multiplier $\overline{K}_0$. Since $\abs{I}\leq k=N-3$ with $N\geq 7$, we notice that $\min{\{\abs{J}, \abs{K}\}}\leq \lfloor{(\abs{I}+1)/2}\rfloor\leq\lfloor{(k+1)/2}\rfloor\leq k-1$. We assume $\abs{K}\leq\abs{J}$, then we can employ Lemma ~\ref{lem:KS_with_weight} and Proposition ~\ref{prop:control_of_norm} to obtain that 
\[
\abs{Z^Ku}\lesssim\frac{\sum_{\abs{K'}\leq \abs{K}+2}\norm{Z^{K'} u}_{L^2(w)}}{\jb{t+r}\jb{t-r}^{1/2}w^{1/2}}\lesssim\frac{\mathcal{E}_{\abs{K}+1}(t)^{1/2}}{\jb{t+r}\jb{t-r}^{1/2}w^{1/2}}\lesssim\frac{\mathcal{E}_k(t)^{1/2}}{\jb{t+r}\jb{t-r}^{1/2}w^{1/2}}.
\]
Then we can easily bound 
\begin{equation}\label{eq:treat_of_Lbar}
\begin{split}
\int_t^{2T}\!\int_{\Sigma_\tau}\! \biggl\lvert\frac{\jb{t-r}^2}{r}\underline{L}(rZ^Iv_\alpha)\mathcal{V}_{\alpha 2}w\biggr\rvert\,dx\,d\tau&\lesssim \int_t^{2T}\!\frac{D_{\abs{I}+1}}{\jb{\tau}}\Bigl(\mathcal{E}_{\abs{I}}(\tau)+\sum_{\abs{J}\leq\abs{I}+1}\norm{Z^Ju}^2_{L^2(w)}\Bigr)\,d\tau\\
&\qquad+\int_t^{2T}\!\frac{\mathcal{E}_k(t)^{1/2}}{\jb{\tau}}\Bigl(\mathcal{E}_{\abs{I}}(\tau)+\sum_{\abs{J}\leq\abs{I}+1}\norm{Z^Ju}^2_{L^2(w)}\Bigr)\,d\tau\\
&\lesssim\int_t^{2T}\!\frac{D_{\abs{I}+1}+\mathcal{E}_k(t)^{1/2}}{\jb{\tau}}\mathcal{E}_{\abs{I}}(\tau)\,d\tau
\end{split}
\end{equation}
where we use Proposition ~\ref{prop:control_of_norm} in the last step. 

Next let us treat the $(\jb{t+r}^2/r)L$ component of $\overline{K}_0$.  To this end we first exploit the spacetime version of Hardy inequality established in Lemma ~\ref{lem:poincareineq} to handle $\mathcal{R}_v$. Indeed, we apply Cauchy-Schwarz to the whole spacetime integral and Lemma ~\ref{lem:poincareineq} to find that 
\begin{equation}\label{eq:treat_of_L}
\begin{split}
&\int_t^{2T}\!\int_{\Sigma_\tau}\! \biggl\lvert\frac{\jb{t+r}^2}{r}L(rZ^Iv_\alpha)\mathcal{R}_vw\biggr\rvert\,dx\,d\tau\\
&\qquad\lesssim D_{\abs{I}+1}S_{\abs{I}}(t)^{1/2}\sum_{\abs{J}\leq\abs{I}}\Bigl(\int_t^{2T}\!\int_{\Sigma_\tau}\! \frac{\abs{Z^J u}^2w}{\jb{t-r}^{\gamma+3/2}}\,dx\,d\tau\Bigr)^{1/2}\\
&\qquad\qquad+\mathcal{E}_k(t)^{1/2}S_{\abs{I}}(t)^{1/2}\sum_{\abs{J}\leq\abs{I}}\Bigl(\int_t^{2T}\!\int_{\Sigma_\tau}\! \frac{\abs{Z^J u}^2}{\jb{t-r}^{5/2-\gamma}}\,dx\,d\tau\Bigr)^{1/2}\\
&\qquad\lesssim D_{\abs{I}+1}S_{\abs{I}}(t)^{1/2}\sum_{\abs{J}\leq\abs{I}}\Bigl(\int_t^{2T}\!\int_{\Sigma_\tau}\! \frac{\jb{t+r}^2\abs{\frac{1}{r}L(rZ^J u)}^2w}{\jb{t-r}^{\gamma+3/2}}\,dx\,d\tau\Bigr)^{1/2}\\
&\qquad\qquad+\mathcal{E}_k(t)^{1/2}S_{\abs{I}}(t)^{1/2}\sum_{\abs{J}\leq\abs{I}}\Bigl(\int_t^{2T}\!\int_{\Sigma_\tau}\! \frac{\jb{t+r}^2\abs{\frac{1}{r}L(rZ^J u)}^2}{\jb{t-r}^{5/2-\gamma}}\,dx\,d\tau\Bigr)^{1/2}\\
&\qquad\lesssim (D_{\abs{I}+1}+\mathcal{E}_k(t)^{1/2})S_{\abs{I}}(t).
\end{split}
\end{equation}
It remains to control the first term on the right-hand side of ~\eqref{eq:expression_of_v2}, which has to be done more carefully. First we rewrite 
\[
\partial_\alpha Z^I u=L_\alpha\partial_qZ^Iu-\frac{\underline{L}_\alpha}{2}L(Z^I u)+\omega_\alpha^Be_B(Z^I u)\equiv L_\alpha\partial_qZ^Iu+\overline{\partial}_\alpha Z^I u.
\]
Then for the tangential derivatives, we are able to use Lemma ~\ref{lem:pointwise_estimate_of_der} and proceed as in ~\eqref{eq:treat_of_Lbar} to conclude that 
\begin{equation}\label{eq:treat_of_tangential_component}
\begin{split}
&\int_t^{2T}\!\int_{\Sigma_\tau}\! \biggl\lvert\frac{\jb{t+r}^2}{r}L(rZ^Iv_\alpha)\widetilde{\chi}(\frac{t}{T})\Bigl(-\Im(\phi_{app}\overline{\overline{\partial}_\alpha Z^I u})-\Im(u\overline{\overline{\partial}_\alpha Z^Iu})\Bigr)w\biggr\rvert\,dx\,d\tau\\
&\qquad\lesssim\int_t^{2T}\!\frac{D_0+\mathcal{E}_1(t)^{1/2}}{\jb{\tau}}\mathcal{E}_{\abs{I}}(\tau)\,d\tau.
\end{split}
\end{equation}
Finally we are left with estimating $L_\alpha\partial_qZ^Iu$. To this end we integrate by parts in $q=r-t$ direction to transfer $\partial_q$ to the term $L(rZ^Iv_\alpha)$ and then use the identity $\Box \varphi=2r^{-1}\partial_qL(r\varphi)+r^{-2}\Delta_\omega \varphi$. Then we insert the equations ~\eqref{eq:expansion_of_eqn_for_v} and apply integration by parts in $q$ direction again to get the desired bound. Specifically, we compute that 
\begin{align*}
&\int_t^{2T}\!\int_{\Sigma_\tau}\! \frac{\jb{t+r}^2}{r}L(rZ^Iv_\alpha)L_\alpha(\omega)\widetilde{\chi}(\frac{t}{T})\Bigl(-\Im(\phi_{app}\overline{\partial_qZ^I u})-\Im(u\overline{\partial_qZ^Iu})\Bigr)w\,dx\,d\tau\\
&\qquad=\int_{\Sigma_t}\! \frac{\jb{t+r}^2}{2r}L(rZ^Iv_\alpha)L_\alpha(\omega)\widetilde{\chi}(\frac{t}{T})\Bigl(-\Im(\phi_{app}\overline{Z^I u})-\Im(u\overline{Z^Iu})\Bigr)w\,dx\\
&\qquad\qquad+\int_t^{2T}\!\int_{\Sigma_\tau}\! \frac{\jb{t+r}^2}{r^2}L(rZ^Iv_\alpha)\Im\Bigl(\partial_q\bigl(L_\alpha(\omega)\widetilde{\chi}(\frac{t}{T})r\phi_{app}w\bigr)\overline{Z^I u}\Bigr)\,dx\\
&\qquad\qquad+\int_t^{2T}\!\int_{\Sigma_\tau}\! \frac{\jb{t+r}^2}{r^2}L(rZ^Iv_\alpha)\Im\Bigl(\partial_q\bigl(L_\alpha(\omega)\widetilde{\chi}(\frac{t}{T})ruw\bigr)\overline{Z^I u}\Bigr)\,dx\\
&\qquad\qquad+\int_t^{2T}\!\int_{\Sigma_\tau}\! -\frac{\jb{t+r}^2}{2r^2}(\Delta_\omega Z^Iv_\alpha) L_\alpha(\omega)\widetilde{\chi}(\frac{t}{T})\Bigl(\Im(\phi_{app}\overline{Z^I u})+\Im(u\overline{Z^Iu})\Bigr)w\,dx\,d\tau\\
&\qquad\qquad+\frac{1}{2}\int_t^{2T}\!\int_{\Sigma_\tau}\! \jb{t+r}^2(\Box Z^Iv_\alpha) L_\alpha(\omega)\widetilde{\chi}(\frac{t}{T})\Bigl(\Im(\phi_{app}\overline{Z^I u})+\Im(u\overline{Z^Iu})\Bigr)w\,dx\,d\tau\\
&\qquad\equiv \mathcal{V}_a+\mathcal{V}_b+\mathcal{V}_c+\mathcal{V}_d+\mathcal{V}_e.
\end{align*}
For the term $\mathcal{V}_a$, by the pointwise bounds $\abs{\phi_{app}}\lesssim D_0\jb{t+r}^{-1}$, $\abs{u}\lesssim \mathcal{E}_1(t)\jb{t+r}^{-1}$ and Proposition ~\ref{prop:control_of_norm} we see that
\begin{equation}\label{eq:control_of_Va}
\abs{\mathcal{V}_a}\lesssim(D_0+\mathcal{E}_1(t)^{1/2})\int_{\Sigma_t}\jb{t+r}^2\frac{\abs{L(rv_\alpha)}^2}{r^2}+\abs{Z^I u}^2\, dx\lesssim D_0\mathcal{E}_{\abs{I}}(t)+\mathcal{E}_1(t)^{1/2}\mathcal{E}_{\abs{I}}(t).
\end{equation}
For the term $\mathcal{V}_b$, as we have the pointwise estimate $\abs{\partial_q(L_\alpha(\omega)\widetilde{\chi}r\phi_{app}w)}\lesssim D_1\jb{t-r}^{-1-\gamma}w$, we can treat it in the same way as in ~\eqref{eq:treat_of_L} to find that 
\begin{equation}\label{eq:control_of_Vb}
\abs{\mathcal{V}_b}\lesssim D_1S_{\abs{I}}(t).
\end{equation}
For the  term $\mathcal{V}_c$, since 
\[
\abs{\partial_q\bigl(L_\alpha(\omega)\widetilde{\chi}(\frac{t}{T})ruw\bigr)}\lesssim \abs{u}w+r\jb{t-r}^{-1}\sum_{\abs{J}\leq 1}\abs{Z^Iu}w\lesssim\frac{\mathcal{E}_1(t)^{1/2}w}{\jb{t+r}}+\frac{r\mathcal{E}_2(t)^{1/2}w}{\jb{t+r}\jb{t-r}^{3/2}w^{1/2}},
\]
we arrive at the estimate
\begin{equation}\label{eq:control_of_Vc}
\abs{\mathcal{V}_c}\lesssim \int_t^{2T}\!\frac{\mathcal{E}_{1}(t)^{1/2}}{\jb{\tau}}\mathcal{E}_{\abs{I}}(\tau)\,d\tau+\mathcal{E}_2(t)^{1/2}S_{\abs{I}}(t).
\end{equation}
For the term $\mathcal{V}_d$, in view of the fact that $\Delta_\omega=\sum_{i, j=1}^3\Omega_{ij}^2/2$, we integrate by parts with respect to the spatial variables and thus can transfer one rotation vector field from $\Delta_\omega Z^Iv_\alpha$ to the other terms. Specifically, we calculate
\begin{equation}\label{eq:control_of_Vd}
\begin{split}
\mathcal{V}_d&=\frac{1}{4}\sum_{i,j=1}^3\int_t^{2T}\!\int_{\Sigma_\tau}\! \frac{\jb{t+r}^2}{r^2} \Omega_{ij}Z^Iv_\alpha L_\alpha(\omega)\widetilde{\chi}(\frac{t}{T})\Bigl(\Im(\phi_{app}\overline{\Omega_{ij}Z^I u})+\Im(u\overline{\Omega_{ij}Z^Iu})\Bigr)w\,dx\,d\tau\\
&\qquad+\frac{1}{4}\sum_{i,j=1}^3\int_t^{2T}\!\int_{\Sigma_\tau}\! \frac{\jb{t+r}^2}{r^2} \Omega_{ij}Z^Iv_\alpha \widetilde{\chi}(\frac{t}{T})\Im\bigl(\Omega_{ij}(L_\alpha(\omega)\phi_{app}+L_\alpha(\omega)u)\overline{Z^I u}\bigr)w\,dx\,d\tau\\
&\lesssim\int_t^{2T}\!\frac{D_1+\mathcal{E}_2(t)^{1/2}}{\jb{\tau}}\mathcal{E}_{\abs{I}}(\tau)\,d\tau
\end{split}
\end{equation}
where we use the identities ~\eqref{eq:vectorfield_identity} and Proposition ~\ref{prop:control_of_norm} in the last step.

Finally we estimate the term $\mathcal{V}_e$. To this end we insert the equations ~\eqref{eq:expansion_of_eqn_for_v} into $\mathcal{V}_e$ to obtain that 
\[
\mathcal{V}_e=\frac{1}{2}\int_t^{2T}\!\int_{\Sigma_\tau}\! \jb{t+r}^2(\mathcal{V}_{\alpha 1}+\mathcal{V}_{\alpha 2}+\mathcal{V}_{\alpha 3}+\mathcal{V}_{\alpha 4}+\mathcal{V}_{\alpha 5}) L_\alpha(\omega)\widetilde{\chi}(\frac{t}{T})\Bigl(\Im(\phi_{app}\overline{Z^I u})+\Im(u\overline{Z^Iu})\Bigr)w\,dx\,d\tau.
\]
We introduce the following short-hand notations
\begin{equation}\label{eq:shorthand_notations}
\begin{split}
\mathcal{V}_e^1&:=\frac{1}{2}\int_t^{2T}\!\int_{\Sigma_\tau}\! \jb{t+r}^2\mathcal{V}_{\alpha 1} L_\alpha(\omega)\widetilde{\chi}(\frac{t}{T})\Bigl(\Im(\phi_{app}\overline{Z^I u})+\Im(u\overline{Z^Iu})\Bigr)w\,dx\,d\tau,\\
\mathcal{V}_e^2&:=\frac{1}{2}\int_t^{2T}\!\int_{\Sigma_\tau}\! \jb{t+r}^2\mathcal{V}_{\alpha 2} L_\alpha(\omega)\widetilde{\chi}(\frac{t}{T})\Bigl(\Im(\phi_{app}\overline{Z^I u})+\Im(u\overline{Z^Iu})\Bigr)w\,dx\,d\tau,\\
\mathcal{V}_e^3&:=\frac{1}{2}\int_t^{2T}\!\int_{\Sigma_\tau}\! \jb{t+r}^2\mathcal{V}_{\alpha 3} L_\alpha(\omega)\widetilde{\chi}(\frac{t}{T})\Bigl(\Im(\phi_{app}\overline{Z^I u})+\Im(u\overline{Z^Iu})\Bigr)w\,dx\,d\tau,\\
\mathcal{V}_e^4&:=\frac{1}{2}\int_t^{2T}\!\int_{\Sigma_\tau}\! \jb{t+r}^2\mathcal{V}_{\alpha 4} L_\alpha(\omega)\widetilde{\chi}(\frac{t}{T})\Bigl(\Im(\phi_{app}\overline{Z^I u})+\Im(u\overline{Z^Iu})\Bigr)w\,dx\,d\tau,\\
\mathcal{V}_e^5&:=\frac{1}{2}\int_t^{2T}\!\int_{\Sigma_\tau}\! \jb{t+r}^2\mathcal{V}_{\alpha 5} L_\alpha(\omega)\widetilde{\chi}(\frac{t}{T})\Bigl(\Im(\phi_{app}\overline{Z^I u})+\Im(u\overline{Z^Iu})\Bigr)w\,dx\,d\tau.
\end{split}
\end{equation}
Following the way we treated the term $B$ in the proof of Lemma ~\ref{lem:error_estimate_of_u1v1} and then exploiting the spacetime version of Hardy inequality from Lemma ~\ref{lem:poincareineq} we conclude that 
\[
\abs{\mathcal{V}_e^1}\lesssim\frac{D_0D_{\abs{I}+3}}{\delta\jb{t}^{1/2-\delta}}S_{\abs{I}}(t)^{1/2}+\frac{\mathcal{E}_1(t)^{1/2}D_{\abs{I}+3}}{\delta\jb{t}^{1/2-\delta}}S_{\abs{I}}(t)^{1/2}
\]
for any $0<\delta<1/2$. Next we turn to the term $\mathcal{V}_e^2$. As we can see from ~\eqref{eq:expression_of_v2}, there are two parts in $\mathcal{V}_{\alpha 2}$. When $\mathcal{V}_e^2 $ involves $\mathcal{R}_v$, we are able to treat it by using the argument in ~\eqref{eq:treat_of_L} combined with the spacetime version of Hardy inequality from Lemma ~\ref{lem:poincareineq}. When $\mathcal{R}_e^2$ contains the tangential components of $\widetilde{\chi}(\frac{t}{T})\Bigl(-\Im(\phi_{app}\overline{\partial_\alpha Z^I u})-\Im(u\overline{\partial_\alpha Z^Iu})\Bigr)$ we control it in the same fashion as in  ~\eqref{eq:treat_of_Lbar}. Indeed, we have
\begin{align*}
&\abs{\mathcal{V}_e^2-\underbrace{\frac{1}{2}\int_t^{2T}\!\int_{\Sigma_\tau}\! \jb{t+r}^2\Bigl(-\Im(\phi_{app}\overline{\partial_q Z^I u})-\Im(u\overline{\partial_qZ^Iu})\Bigr) L^2_\alpha(\omega)\widetilde{\chi}^2(\frac{t}{T})\Bigl(\Im(\phi_{app}\overline{Z^I u})+\Im(u\overline{Z^Iu})\Bigr)w\,dx\,d\tau}_{:=\mathcal{N}_v}}\\
&\qquad\lesssim(D_0D_{\abs{I}+1}+\mathcal{E}_1(t)^{1/2}\mathcal{E}_k(t)^{1/2})S_{\abs{I}}(t
)+\int_t^{2T}\!\frac{(D_0+\mathcal{E}_1(t)^{1/2})^2}{\jb{\tau}}\mathcal{E}_{\abs{I}}(\tau)\,d\tau.
\end{align*}
We denote by $\mathcal{N}_v$ as above the term containing the non tangential component of the first term on the right-hand side of ~\eqref{eq:expression_of_v2}. In order to handle $\mathcal{N}_v$, we rewrite it as 
\[
\mathcal{N}_v=\frac{1}{8}\int_t^{2T}\!\int_{\Sigma_\tau}\! \jb{t+r}^2L^2_\alpha(\omega)\widetilde{\chi}^2(\frac{t}{T})\Bigl(\Re\bigl((\phi_{app}+u)^2\partial_q(\overline{Z^Iu})^2\bigr)-\abs{\phi_{app}+u}^2\partial_q\abs{Z^I u}^2\Bigr)w\,dx\,d\tau.
\]
Thanks to the above expression we can apply integration by parts in $q$ direction to find that 
\begin{align*}
\mathcal{N}_v&=\frac{1}{16}\int_{\Sigma_t}\! \jb{t+r}^2L^2_\alpha(\omega)\widetilde{\chi}^2(\frac{t}{T})\Bigl(\Re\bigl((\phi_{app}+u)^2(\overline{Z^Iu})^2\bigr)-\abs{\phi_{app}+u}^2\abs{Z^I u}^2\Bigr)w\,dx\\
&\qquad+\frac{1}{8}\biggl(\int_t^{2T}\!\int_{\Sigma_\tau}\!\frac{\jb{t+r}^2}{r^2}\partial_q\bigl(wL^2_\alpha(\omega)\widetilde{\chi}^2(\frac{t}{T})\abs{r\phi_{app}+ru}^2\bigr)\abs{Z^I u}^2\,dx\,d\tau\\
&\qquad\qquad-\int_t^{2T}\!\int_{\Sigma_\tau}\!\frac{\jb{t+r}^2}{r^2}\Re\Bigl(\partial_q\bigl(wL^2_\alpha(\omega)\widetilde{\chi}^2(\frac{t}{T})(r\phi_{app}+ru)^2\bigr)(\overline{Z^I u})^2\Bigr)\,dx\,d\tau\biggr)\\
&\equiv\mathcal{N}_v^1+\mathcal{N}_v^2
\end{align*}
Similar to the control of the term $\mathcal{V}_a$ in ~\eqref{eq:control_of_Va}, by Proposition ~\ref{prop:control_of_norm} we can get that
\[
\abs{\mathcal{N}_v^1}\lesssim (D_0+\mathcal{E}_1(t)^{1/2})^2
\mathcal{E}_{\abs{I}}(t).
\]
Analogous to the estimates of the terms $\mathcal{V}_b$ and $\mathcal{V}_c$ in ~\eqref{eq:control_of_Vb} and ~\eqref{eq:control_of_Vc} respectively, we arrive at 
\[
\abs{\mathcal{N}_v^2}\lesssim \int_t^{2T}\!\frac{(D_0+\mathcal{E}_{1}(t)^{1/2})^2}{\jb{\tau}}\mathcal{E}_{\abs{I}}(\tau)\,d\tau+(D_1+\mathcal{E}_2(t)^{1/2})^2S_{\abs{I}}(t).
\]
We first consider the error estimates for $\mathcal{V}_{\alpha 3}$, $\mathcal{V}_{\alpha 4}$ and $\mathcal{V}_{\alpha 5}$ before we turn to the analysis of $\mathcal{V}_e^3$, $\mathcal{V}_e^4$ and $\mathcal{V}_e^5$.

\subsubsection*{Error estimates for $\mathcal{V}_{\alpha 3}$} By the estimate ~\eqref{eq:estimate_of_Aalpha_app} from Lemma ~\ref{lem:estimate_of_Aapp} we have
\begin{align*}
\abs{\mathcal{V}_{\alpha 3}}&\lesssim\sum_{\abs{J}+\abs{K}+\abs{L}\leq\abs{I}}\bigl\lvert Z^L\widetilde{\chi}(\frac{t}{T})\bigr\rvert\Bigl(\bigl\lvert Z^K\abs{\phi_{app}}^2\bigr\rvert\bigl\lvert Z^J v_\alpha\bigr\rvert+\bigl\lvert Z^K(\phi_{app}A_{\alpha, app})\bigr\rvert\bigl\lvert Z^J u\bigr\rvert\Bigr)\\
&\lesssim\sum_{\abs{J}+\abs{K}\leq\abs{I}}\frac{D^2_{\abs{K}}}{\jb{t+r}^2}\abs{Z^Jv_\alpha}+\sum_{\abs{J}+\abs{K}\leq\abs{I}}\frac{D^2_{\abs{K}+1}}{\jb{t+r}^2}\abs{Z^J u}\\
&\qquad+\sum_{\abs{J}+\abs{K}\leq\abs{I}}\frac{D^3_{\abs{K}+2}}{\jb{t+r}^2\jb{t-r}^\gamma\jb{(r-t)_+}^{2\gamma}}\ln\frac{\jb{t+r}}{\jb{t-r}}\abs{Z^J u}\\
&\lesssim\sum_{\abs{J}+\abs{K}\leq\abs{I}}\frac{D^2_{\abs{K}+1}}{\jb{t+r}^2}(\abs{Z^J v_\alpha}+\abs{Z^J u})+\sum_{\abs{J}+\abs{K}\leq\abs{I}}\frac{D^3_{\abs{K}+2}}{\jb{t+r}\jb{t-r}^{1+\gamma}\jb{(r-t)_+}^{2\gamma}}\abs{Z^J u}
\end{align*}
where $a_+=\max\{0, a\}$. In view of the above estimate for $\mathcal{V}_{\alpha 3}$ we can easily find that 
\begin{equation}
\int_t^{2T}\!\int_{\Sigma_\tau}\! \biggl\lvert\frac{1}{r}\overline{K}_0(rZ^Iv_\alpha)\mathcal{V}_{\alpha 3}w\biggr\rvert\,dx\,d\tau\lesssim D^2_{\abs{I}+2}\Bigl(\int_t^{2T}\!\frac{\mathcal{E}_{\abs{I}}(\tau)}{\jb{\tau}}\,d\tau+S_{\abs{I}}(t)\Bigr).
\end{equation}

\subsubsection*{Error estimates for $\mathcal{V}_{\alpha 4}$}By the expression ~\eqref{eq:expansion_of_eqn_for_v} and Lemma ~\ref{lem:estimate_of_Aapp} it follows that 
\begin{align*}
\abs{\mathcal{V}_{\alpha 4}}&\lesssim\sum_{\abs{J}+\abs{K}+\abs{L}\leq\abs{I}}\bigl\lvert Z^L\widetilde{\chi}(\frac{t}{T})\bigr\rvert\Bigl(\bigl\lvert Z^K\phi_{app}\bigr\rvert\bigl\lvert Z^J (v_\alpha u)\bigr\rvert+\bigl\lvert Z^KA_{\alpha, app}\bigr\rvert\bigl\lvert Z^J \abs{u}^2\bigr\rvert\Bigr)\\
&\lesssim\sum_{\abs{J}+\abs{K}\leq\abs{I}}\frac{D_{\abs{K}}}{\jb{t+r}}\sum_{\abs{J'}+\abs{J''}\leq\abs{J}}\abs{Z^{J'}v_\alpha}\abs{Z^{J''}u}\\
&\qquad+\sum_{\abs{J}+\abs{K}\leq\abs{I}}\frac{D_{\abs{K}}}{\jb{t+r}}\sum_{\abs{J'}+\abs{J''}\leq\abs{J}}\abs{Z^{J'}u}\abs{Z^{J''}u}\\
&\qquad+\sum_{\abs{J}+\abs{K}\leq\abs{I}}\frac{D^2_{\abs{K}+2}}{\jb{t+r}\jb{(r-t)_+}^{2\gamma}}\ln\frac{\jb{t+r}}{\jb{t-r}}\sum_{\abs{J'}+\abs{J''}\leq\abs{J}}\abs{Z^{J'}u}\abs{Z^{J''}u}\\
&\lesssim\sum_{\abs{J}+\abs{K}\leq\abs{I}}\frac{\mathcal{E}_k(t)^{1/2}D_{\abs{K}+1}}{\jb{t+r}^2}(\abs{Z^J v_\alpha}+\abs{Z^J u})\\
&\qquad+\sum_{\abs{J}+\abs{K}\leq\abs{I}}\frac{D^2_{\abs{K}+2}\mathcal{E}_k(t)^{1/2}}{\jb{t+r}\jb{t-r}^{3/2}\jb{(r-t)_+}^{2\gamma}w^{1/2}}\abs{Z^J u}.
\end{align*}
Similarly we obtain that
\begin{equation}
\int_t^{2T}\!\int_{\Sigma_\tau}\! \biggl\lvert\frac{1}{r}\overline{K}_0(rZ^Iv_\alpha)\mathcal{V}_{\alpha 4}w\biggr\rvert\,dx\,d\tau\lesssim D_{\abs{I}+2}\mathcal{E}_k(t)^{1/2}\Bigl(\int_t^{2T}\!\frac{\mathcal{E}_{\abs{I}}(\tau)}{\jb{\tau}}\,d\tau+S_{\abs{I}}(t)\Bigr).
\end{equation}

\subsubsection*{Error estimates for $\mathcal{V}_{\alpha 5}$} We see that
\begin{align*}
\abs{\mathcal{V}_{\alpha 5}}\lesssim\sum_{\abs{J}+\abs{K}\leq\abs{I}}\bigl\lvert Z^L\widetilde{\chi}(\frac{t}{T})\bigr\rvert\sum_{\abs{J_1}+\abs{J_2}+\abs{J_3}\leq\abs{J}}\abs{Z^{J_1}v_\alpha}\abs{Z^{J_2}u}\abs{Z^{J_3}u}.
\end{align*}
Suppose $\abs{J_1}\leq\abs{J_2}\leq\abs{J_3}$, we can consider the $L^\infty$ estimates for $Z^{J_1}v_\alpha$ and $Z^{J_2}u$ and then the $L^2$ estimates for $Z^{J_3}$. Indeed, we have $\abs{J_1}\leq\abs{J_2}\leq\lfloor\abs{I}/3\rfloor\leq\lfloor k/3\rfloor\leq k-1$ for $k\leq 2$. Thus by Proposition ~\ref{prop:control_of_norm} it holds that
\[
\abs{Z^{J_1}v_\alpha}+\abs{Z^{J_2}u}\lesssim\frac{\sum_{\abs{K}\leq k+1}\norm{Z^Kv_\alpha}_{L^2(w)}+\norm{Z^Ku}_{L^2(w)}}{\jb{t+r}\jb{t-r}^{1/2}w^{1/2}}\lesssim\frac{\mathcal{E}_k(t)^{1/2}}{\jb{t+r}\jb{t-r}^{1/2}w^{1/2}}.
\]
Then we arrive at
\begin{equation}
\int_t^{2T}\!\int_{\Sigma_\tau}\! \biggl\lvert\frac{1}{r}\overline{K}_0(rZ^Iv_\alpha)\mathcal{V}_{\alpha 5}w\biggr\rvert\,dx\,d\tau\lesssim\mathcal{E}_k(t)\int_t^{2T}\!
\frac{\mathcal{E}_{\abs{I}}(\tau)}{\jb{\tau}}\,d\tau.	\end{equation}
\medskip

Finally, having the error estimates for $\mathcal{V}_{\alpha 3}$, $\mathcal{V}_{\alpha 4}$ and $\mathcal{V}_{\alpha 5}$ at our disposal, we are at the position to analyze the terms $\mathcal{V}_e^3$, $\mathcal{V}_e^4$ and $\mathcal{V}_e^5$ in ~\eqref{eq:shorthand_notations}. Correspondingly, we can obtain that
\begin{equation}
\begin{split}
\mathcal{V}_e^3&\lesssim(D_0+\mathcal{E}_1(t)^{1/2})D^2_{\abs{I}+2}\Bigl(\int_t^{2T}\!\frac{\mathcal{E}_{\abs{I}}(\tau)}{\jb{\tau}}\,d\tau+S_{\abs{I}}(t)\Bigr),\\
\mathcal{V}_e^4&\lesssim(D_0+\mathcal{E}_1(t)^{1/2})D_{\abs{I}+2}\mathcal{E}_k(t)^{1/2}\Bigl(\int_t^{2T}\!\frac{\mathcal{E}_{\abs{I}}(\tau)}{\jb{\tau}}\,d\tau+S_{\abs{I}}(t)\Bigr),\\
\mathcal{V}_e^5&\lesssim(D_0+\mathcal{E}_1(t)^{1/2})\mathcal{E}_k(t)\int_t^{2T}\!\frac{\mathcal{E}_{\abs{I}}(\tau)}{\jb{\tau}}\,d\tau.\\
\end{split}
\end{equation}

\subsection{Energy estimates for the remainder of the scalar field}In this subsection we consider the energy estimates for the remainder of the scalar field $u$, namely, we need to deal with the terms $\mathcal{U}_2$, $\mathcal{U}_3$, $\mathcal{U}_4$ and $\mathcal{U}_5$ in the equation ~\eqref{eq:expansion_of_eqn_for_u}.

\subsubsection*{Error estimates for $\mathcal{U}_3$, $\mathcal{U}_4$ and $\mathcal{U}_5$}Similar to $\mathcal{V}_{\alpha 3}$, $\mathcal{V}_{\alpha 4}$ and $\mathcal{V}_{\alpha 5}$, we have the following pointwise estimates
\begin{align*}
\abs{\mathcal{U}_3}&\lesssim\sum_{\abs{J}+\abs{K}\leq\abs{I}}\frac{D^2_{\abs{K}+1}}{\jb{t+r}^2}\bigl(\abs{Z^J v_\alpha}+\abs{Z^J u}\bigr)\\
&\qquad+\sum_{\abs{J}+\abs{K}\leq\abs{I}}\frac{D^3_{\abs{K}+2}}{\jb{t+r}\jb{t-r}\jb{(r-t)_+}^{2\gamma}}\bigl(\abs{Z^J v_\alpha}+\abs{Z^J u}\bigr),\\
\abs{\mathcal{U}_4}&\lesssim\sum_{\abs{J}+\abs{K}\leq\abs{I}}\frac{\mathcal{E}_k(t)^{1/2}D_{\abs{K}+1}}{\jb{t+r}^2}(\abs{Z^J v_\alpha}+\abs{Z^J u})\\
&\qquad+\sum_{\abs{J}+\abs{K}\leq\abs{I}}\frac{D^2_{\abs{K}+2}\mathcal{E}_k(t)^{1/2}}{\jb{t+r}\jb{t-r}^{3/2}\jb{(r-t)_+}^{2\gamma}w^{1/2}}\bigl(\abs{Z^J v_\alpha}+\abs{Z^J u}\bigr),\\
\abs{\mathcal{U}_5}&\lesssim\sum_{\abs{J}\leq\abs{I}}\frac{\mathcal{E}_k(t)}{\jb{t+r}^2}\bigl(\abs{Z^J v_\alpha}+\abs{Z^J u}\bigr).
\end{align*}
Therefore, it follows that 
\begin{equation}
\begin{split}
\int_t^{2T}\!\int_{\Sigma_\tau}\! \biggl\lvert\Re\bigl(\frac{1}{r}\overline{K}_0(rZ^Iu)\overline{\mathcal{U}_3}\bigr)w\biggr\rvert\,dx\,d\tau&\lesssim D^2_{\abs{I}+2}\Bigl(\int_t^{2T}\!\frac{\mathcal{E}_{\abs{I}}(\tau)}{\jb{\tau}}\,d\tau+S_{\abs{I}}(t)\Bigr),\\
\int_t^{2T}\!\int_{\Sigma_\tau}\! \biggl\lvert\Re\bigl(\frac{1}{r}\overline{K}_0(rZ^Iu)\overline{\mathcal{U}_4}\bigr)w\biggr\rvert\,dx\,d\tau&\lesssim D_{\abs{I}+2}\mathcal{E}_k(t)^{1/2}\Bigl(\int_t^{2T}\!\frac{\mathcal{E}_{\abs{I}}(\tau)}{\jb{\tau}}\,d\tau+S_{\abs{I}}(t)\Bigr),\\
\int_t^{2T}\!\int_{\Sigma_\tau}\! \biggl\lvert\Re\bigl(\frac{1}{r}\overline{K}_0(rZ^Iu)\overline{\mathcal{U}_5}\bigr)w\biggr\rvert\,dx\,d\tau&\lesssim
\mathcal{E}_k(t)\int_t^{2T}\!
\frac{\mathcal{E}_{\abs{I}}(\tau)}{\jb{\tau}}\,d\tau.
\end{split}
\end{equation}

\subsubsection*{Error estimates for $\mathcal{U}_2$}We decompose $\mathcal{U}_2$ into two components which are supported in the far interior and the extended exterior respectively. Specifically, let $\varphi$ be a smooth cutoff such that $\varphi(s)=1$ for $s\leq 1/4$, and $\varphi(s)=0$ for $s\geq 1/2$. We rewrite
\begin{align*}
\mathcal{U}_2&=\varphi(\frac{r}{t+1})\bigl(Z^I+\sum_{\abs{J}\leq\abs{I}-1}c_J^IZ^J\bigr)\biggl[\widetilde{\chi}(\frac{t}{T})\Big(-2iv^\alpha\partial_\alpha\phi_{app}-2iA^\alpha_{app}\partial_\alpha u-2iv^\alpha\partial_\alpha u\Big)\biggr]\\
&\qquad+(1-\varphi(\frac{r}{t+1}))\bigl(Z^I+\sum_{\abs{J}\leq\abs{I}-1}c_J^IZ^J\bigr)\biggl[\widetilde{\chi}(\frac{t}{T})\Big(-2iv^\alpha\partial_\alpha\phi_{app}-2iA^\alpha_{app}\partial_\alpha u-2iv^\alpha\partial_\alpha u\Big)\biggr]\\
&\equiv\mathcal{U}_2^{in}+\mathcal{U}_2^{ex}.
\end{align*}
First, we notice that $\jb{t-r}\sim\jb{t+r}$ in the support of $\varphi(r/(t+1))$, thus it is easy to deal with the term $\mathcal{U}_2^{in}$. Indeed, we have
\[
\int_t^{2T}\!\int_{\Sigma_\tau}\! \biggl\lvert\Re\bigl(\frac{1}{r}\overline{K}_0(rZ^Iu)\overline{\mathcal{U}_2^{in}}\bigr)w\biggr\rvert\,dx\,d\tau\lesssim(D_{\abs{I}+1}+\mathcal{E}_k(t)^{1/2})\int_t^{2T}\!\frac{\mathcal{E}_{\abs{I}}(\tau)}{\jb{\tau}}\,d\tau.
\]
Next, we focus on analyzing the term $\mathcal{U}_2^{ex}$. We notice that $r\sim\jb{t+r}$ in the support of $\mathcal{U}_2^{ex}$ and rewrite $\mathcal{U}_2^{ex}$ in the null frame as
\begin{equation}\label{eq:u2ex_in_null}
\begin{split}
\mathcal{U}_2^{ex}=(1-\varphi(\frac{r}{t+1}))\bigl(Z^I+\sum_{\abs{J}\leq\abs{I}-1}c_J^IZ^J\bigr)&\biggl[\widetilde{\chi}(\frac{t}{T})\Big(-2iv_L\partial_q\phi_{app}-2iA_{L, app}\partial_q u-2iv_L\partial_q u\Big)\\
&+\widetilde{\chi}(\frac{t}{T})\Big(iv_{\underline{L}}L\phi_{app}+iA_{\underline{L}, app}Lu+iv_{\underline{L}}Lu\Big)\\
&+\widetilde{\chi}(\frac{t}{T})\Big(-2iv_{e_B}e_B\phi_{app}-2iA_{e_B, app}e_Bu-2iv_{e_B}e_Bu\Big)\biggr]\\
&\equiv\mathcal{U}^{ex}_{2\underline{L}}+\mathcal{U}^{ex}_{2L}+\mathcal{U}^{ex}_{2B}.
\end{split}
\end{equation}
Owing to the estimate ~\eqref{eq:estimate_for_A_B} from Lemma ~\ref{lem:estimate_of_Aapp} we can control the term $\mathcal{U}^{ex}_{2B}$ as follows
\begin{align*}
&\int_t^{2T}\!\int_{\Sigma_\tau}\! \biggl\lvert\Re\bigl(\frac{1}{r}\overline{K}_0(rZ^Iu)\overline{\mathcal{U}^{ex}_{2B}}\bigr)w\biggr\rvert\,dx\,d\tau\\
&\qquad\lesssim(D_{\abs{I}+2}+\mathcal{E}_k(t)^{1/2})\int_t^{2T}\!\frac{\mathcal{E}_{\abs{I}}(\tau)}{\jb{\tau}}\,d\tau.
\end{align*}
For the term $\mathcal{U}^{ex}_{2L}$, since we only have $\abs{Z^JA_{\underline{L}, app}}\lesssim\jb{t+r}^{-1}\jb{(r-t)_+}^{-2\gamma}\ln\frac{\jb{t+r}}{\jb{t-r}}\leq \jb{t-r}^{-1}\jb{(r-t)_+}^{-2\gamma}$, we have to rewrite $L(Z^Ju)=r^{-1}L(rZ^Ju)-r^{-1}Z^Ju$ and then exploit the spacetime version of Hardy inequality to obtain that 
\[
\int_t^{2T}\!\int_{\Sigma_\tau}\! \biggl\lvert\Re\bigl(\frac{1}{r}\overline{K}_0(rZ^Iu)\overline{\mathcal{U}^{ex}_{2L	}}\bigr)w\biggr\rvert\,dx\,d\tau\lesssim(D_{\abs{I}+2}+\mathcal{E}_k(t)^{1/2})\Bigl(\int_t^{2T}\!\frac{\mathcal{E}_{\abs{I}}(\tau)}{\jb{\tau}}\,d\tau+S_{\abs{I}}(t)\Bigr).
\]
Now it remains to control the term $\mathcal{U}^{ex}_{2\underline{L}}$. We can easily see that 
\[
\int_t^{2T}\!\int_{\Sigma_\tau}\! \biggl\lvert\Re\bigl(\frac{1}{r}\jb{t-r}^2\underline{L}(rZ^Iu)\overline{\mathcal{U}^{ex}_{2\underline{L}}}\bigr)w\biggr\rvert\,dx\,d\tau\lesssim(D_{\abs{I}+2}+\mathcal{E}_k(t)^{1/2})\int_t^{2T}\!\frac{\mathcal{E}_{\abs{I}}(\tau)}{\jb{\tau}}\,d\tau.
\]
Therefore, it suffices to focus on the $(\jb{t+r}^2/r)L$ component. By Lemma ~\ref{lem:estimate_of_Aapp}, we see that $A_{L, app}=\frac{\BFq}{4\pi r}+\mathcal{R}_L$. In the presence of $\frac{\BFq}{4\pi r}$, we are unable to use the spacetime version of Hardy inequality to handle the lower order terms $\sum_{\abs{J}<\abs{I}}\frac{\BFq}{4\pi r}\partial_q Z^Ju$ as before, because we only have decay $\jb{t-r}^{-1}$
in the exterior which is not enough. In fact, the decay we expect in $q$ in the exterior should be $\jb{t-r}^{-\gamma-1/2}$. Therefore, we isolate this component and analyze it in a more delicate way. Namely, instead of a crude estimate, we will figure out what exactly the term $(Z^I+\sum_{\abs{J}<\abs{I}}c_J^IZ^J)[\widetilde{\chi}(t/T)\frac{\BFq}{4\pi r}\partial_q u]$ is.
\begin{lem}\label{lem:refinded_analysis}
Suppose $\Box f=\widetilde{\chi}(\frac{t}{T})\frac{\BFq}{4\pi r}\partial_q u$, then one has
\begin{equation}\Box Z^If=(Z^I+c_I^JZ^J)[\widetilde{\chi}(\frac{t}{T})\frac{\BFq}{4\pi r}\partial_q u]=\sum_{J+K=I}\frac{\BFq}{4\pi r}Z^K\widetilde{\chi}(\frac{t}{T})\partial_qZ^Ju+\mathcal{R}_{\abs{I}}
\end{equation}
where
\[
\abs{Z^L\mathcal{R}_{\abs{I}}}\lesssim\frac{\BFq}{4\pi r^2}\sum_{\abs{J'}\leq\abs{I}+\abs{L}}\abs{Z^{J'} u}.
\]
\end{lem}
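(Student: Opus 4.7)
The first equality in the statement is immediate from the commutation identities \eqref{eq:com_iden}: iterating $[\Box, \partial_\alpha] = [\Box, \Omega_{ij}] = [\Box, \Omega_{0i}] = 0$ and $[\Box, S] = 2\Box$ one obtains $\Box Z^I = (Z^I + \sum_{|J|<|I|} c_J^I Z^J) \Box$, where the constants $c_J^I$ record the number of times $S$ appears in $Z^I$ and associated numerical factors. The substance of the lemma is the identification of $\mathcal{R}_{|I|}$ and its bound, which I would establish by induction on $|I|$, the base case $|I|=0$ being trivial with $\mathcal{R}_0 = 0$.

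For the inductive step I append a single vector field $Z'$ to obtain $I' = Z' \cdot I$. Writing $\Box Z^{I'} f = Z' \Box Z^I f + [\Box, Z'] Z^I f$ and applying the inductive hypothesis, the task reduces to analyzing $Z'$ acting on a generic summand $r^{-1} g\,\partial_q h$ of the main sum, with $g = Z^K\widetilde\chi(t/T)$ and $h = Z^J u$. A single Leibniz step gives
\begin{equation*}
Z'(r^{-1} g\,\partial_q h) = r^{-1}(Z'g)\partial_q h + r^{-1} g\,\partial_q(Z' h) + (Z' r^{-1})\, g\, \partial_q h + r^{-1} g\,[Z', \partial_q] h.
\end{equation*}
The first two terms update the main sum for $I'$ (by $K \mapsto Z' K$ and $J \mapsto Z' J$); the last two must either be absorbed into $\mathcal{R}_{|I'|}$ with the required $r^{-2}$ decay or cancelled against the $[\Box, Z']Z^I f$ contribution.

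I would check the corrections case by case using \eqref{eq:com_iden} and Lemma~\ref{lem:pointwise_estimate_of_der}. For $Z' \in \{\partial_t, \Omega_{ij}\}$ both pieces vanish. For $Z' = \partial_i$ one has $\partial_i r^{-1} = -\omega_i/r^2$ and $[\partial_i, \partial_q] = \omega_i^B e_B/(2r)$, producing corrections of size $r^{-2}\sum_{|J'|\leq 1}|Z^{J'}h|$ after using the tangential improvement $|e_B h|\lesssim \sum|Z^{J'}h|/(1+t+r)$. For $Z' = \Omega_{0i}$ one has $\Omega_{0i}(r^{-1}) = -t\omega_i/r^2$ and $[\Omega_{0i}, \partial_q] = \omega_i\partial_q + (t+r)\omega_i^D e_D/(2r)$, so the $\partial_q h$ coefficients combine as $-t\omega_i/r^2 + \omega_i/r = \omega_i q/r^2$, and $|\omega_i q\,\partial_q h/r^2|\lesssim r^{-2}\sum|Z^{J'}h|$ via the bound $|q\partial_q h|\lesssim \sum_{|J'|\leq 1}|Z^{J'}h|$; the tangential piece $(t+r)\omega_i^D e_D h/(2r^2)$ is also $\lesssim r^{-2}\sum|Z^{J'}h|$. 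For $Z' = S$, the identities $Sr^{-1}=-r^{-1}$ and $[S,\partial_q]=-\partial_q$ combine into $-2r^{-1} g\,\partial_q h$, which is of the \emph{original} form (not $r^{-2}$), but is \emph{exactly} cancelled by $[\Box, S]Z^I f = 2\Box Z^I f$ which contributes $+2 r^{-1} g\,\partial_q h$ to the same main sum.

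Finally, $Z^L\mathcal{R}_{|I|}$ is handled by iterating this analysis: each $Z$ applied to a remainder term of the schematic form $r^{-2}\cdot C(t,r,\omega)\cdot Z^{J'}u$ either hits a bounded coefficient (like $\omega_i$, $q/r$ suitably combined, or $\widetilde\chi$), hits $r^{-2}$ (producing a further power of $r^{-1}$, which is harmless), or hits $Z^{J'}u$ (incrementing $|J'|$ by one); the same cancellation mechanism for $S$ and $\Omega_{0i}$ ensures the $r^{-2}$ prefactor survives. The principal obstacle is bookkeeping---matching the precise constants $c_J^I$ generated by iterating $[\Box,S]=2\Box$ against the overshoots produced when $S$ and the boosts commute past $r^{-1}\partial_q$---but once this pairing is identified, the claimed bound on $|Z^L\mathcal{R}_{|I|}|$ is a direct consequence of Lemma~\ref{lem:pointwise_estimate_of_der} and the Leibniz expansion.
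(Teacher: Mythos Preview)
Your proposal is correct and follows essentially the same approach as the paper. The paper computes $\Box Zf$ directly for each commuting field $Z$ at level $|I|=1$ and then invokes $\Box ZZ^{\hat I}f=Z\Box Z^{\hat I}f+[\Box,Z]Z^{\hat I}f$ for the general case; you organize the same computation via a Leibniz expansion of $Z'(r^{-1}g\,\partial_q h)$ and make the $S$-cancellation with $[\Box,S]=2\Box$ explicit, which is a minor presentational refinement rather than a different argument.
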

\begin{proof}
First consider the case $\abs{I}$=1. If $Z=\partial_t$, we have
\[
\Box\partial_t f=\partial_t(\widetilde{\chi}(\frac{t}{T})\frac{\BFq}{4\pi r}\partial_q u)=\frac{\BFq}{4\pi r}(\partial_t\widetilde{\chi}(\frac{t}{T})\partial_qu+\widetilde{\chi}(\frac{t}{T})\partial_q\partial_tu).
\]
If $Z=\partial_i$,
\[
\Box\partial_i f=\partial_i(\widetilde{\chi}(\frac{t}{T})\frac{\BFq}{4\pi r}\partial_q u)=\frac{\BFq}{4\pi r}(\partial_i\widetilde{\chi}(\frac{t}{T})\partial_qu+\widetilde{\chi}(\frac{t}{T})\partial_q\partial_iu)+\widetilde{\chi}(\frac{t}{T})\frac{\BFq\omega_i^B}{8\pi r^2}e_B(u)-\widetilde{\chi}(\frac{t}{T})\frac{\BFq\omega_i}{4\pi r^2}\partial_q u.
\]
If $Z=\Omega_{ij}$,
\[
\Box\Omega_{ij} f=\Omega_{ij}(\widetilde{\chi}(\frac{t}{T})\frac{\BFq}{4\pi r}\partial_q u)=\frac{\BFq}{4\pi r}(\Omega_{ij}\widetilde{\chi}(\frac{t}{T})\partial_qu+\widetilde{\chi}(\frac{t}{T})\partial_q\Omega_{ij}u).
\]
If $Z=\Omega_{0i}$,
\[
\Box \Omega_{i0} f=\Omega_{0i}(\widetilde{\chi}(\frac{t}{T})\frac{\BFq}{4\pi r}\partial_q u)=\frac{\BFq}{4\pi r}(\Omega_{0i}\widetilde{\chi}(\frac{t}{T})\partial_qu+\widetilde{\chi}(\frac{t}{T})\partial_q\Omega_{0i}u)+\frac{\BFq\omega_i(r-t)}{4\pi r^2}\partial_q u+\frac{\omega_i^B\BFq(t+r)}{8\pi r^2}e_B(u).
\]
If $Z=S$,
\[
\Box S f=S(\widetilde{\chi}(\frac{t}{T})\frac{\BFq}{4\pi r}\partial_q u)+2\widetilde{\chi}(\frac{t}{T})\frac{\BFq}{4\pi r}\partial_q u=
\frac{\BFq}{4\pi r}S\widetilde{\chi}(\frac{t}{T})\partial_qu+\widetilde{\chi}(\frac{t}{T})\partial_qSu).
\]
The general case $\abs{I}>1$ follows from the fact $\Box ZZ^{\hat{I}}f=Z\Box Z^{\hat{I}}f+[\Box, Z] Z^{\hat{I}}f$ and the above calculation.
\end{proof}
By the above Lemma, we rewrite $\mathcal{U}^{ex}_{2\underline{L}}$
\begin{equation}\label{eq:expression_of_u2}
\mathcal{U}^{ex}_{2\underline{L}}=\bigl(1-\varphi(\frac{r}{t+1})\bigr)\Bigl(-2i\sum_{J+K=I}\frac{\BFq}{4\pi r}\partial_q\Bigl(Z^K\widetilde{\chi}(\frac{t}{T})Z^Ju\Bigr)-2i(\mathcal{R}_L+v_L)\partial_q\bigl(\widetilde{\chi}(\frac{t}{T})Z^I u\bigr)\Bigr)+\mathcal{R}_u.
\end{equation}
The error estimate for $\mathcal{R}_u$ is similar to that for $\mathcal{R}_v$ in ~\eqref{eq:treat_of_L}. In fact, we have
\[
\int_t^{2T}\!\int_{\Sigma_\tau}\! \biggl\lvert\Re\Bigl(\frac{\jb{t+r}^2}{r}L(rZ^Iu)\overline{\mathcal{R}_u}\Bigr)w\biggr\rvert\,dx\,d\tau\lesssim(D_{\abs{I}+2}+\mathcal{E}_k(t)^{1/2})\Bigl(\int_t^{2T}\!\frac{\mathcal{E}_{\abs{I}}(\tau)}{\jb{\tau}}\,d\tau+S_{\abs{I}}(t)\Bigr).
\]

\medskip

Next, we turn to the first term in ~\eqref{eq:expression_of_u2}. Analogous to the non tangential component of the first term in ~\eqref{eq:expression_of_v2}, we have
\begin{align*}
&\int_t^{2T}\!\int_{\Sigma_\tau}\!\bigl(1-\varphi(\frac{r}{t+1})\bigr)\Re\Bigl(\frac{\jb{t+r}^2}{r}L(rZ^Iu)2i\sum_{J+K=I}\frac{\BFq}{4\pi r}\partial_q\Bigl(Z^K\widetilde{\chi}(\frac{t}{T})\overline{Z^Ju}\Bigr)\Bigr)w\,dx\,d\tau\\
&\qquad+\int_t^{2T}\!\int_{\Sigma_\tau}\!\bigl(1-\varphi(\frac{r}{t+1})\bigr)\Re\Bigl(\frac{\jb{t+r}^2}{r}L(rZ^Iu)2i(\mathcal{R}_L+v_L)\partial_q\bigl(\widetilde{\chi}(\frac{t}{T})\overline{Z^I u}\bigr)\Bigr)w\,dx\,d\tau\\
&\qquad\qquad=\biggl(\int_{\Sigma_t}\!\bigl(1-\varphi(\frac{r}{t+1})\bigr)\Re\Bigl(\frac{\jb{t+r}^2}{r}L(rZ^Iu)i\sum_{J+K=I}\frac{\BFq}{4\pi r}Z^K\widetilde{\chi}(\frac{t}{T})\overline{Z^Ju}\Bigr)w\,dx\\
&\qquad\qquad\qquad\qquad+\int_{\Sigma_t}\!\bigl(1-\varphi(\frac{r}{t+1})\bigr)\Re\Bigl(\frac{\jb{t+r}^2}{r}L(rZ^Iu)i(\mathcal{R}_L+v_L)\widetilde{\chi}(\frac{t}{T})\overline{Z^I u}\Bigr)w\,dx\biggr)\\
&\qquad\qquad\qquad+\int_t^{2T}\!\int_{\Sigma_\tau}\!\partial_q\Bigl(w\bigl(\varphi(\frac{r}{t+1})-1\bigr)\Bigr)\Re\Bigl(\frac{\jb{t+r}^2}{r^2}L(rZ^Iu)2i\sum_{J+K=I}\frac{\BFq}{4\pi }Z^K\widetilde{\chi}(\frac{t}{T})\overline{Z^Ju}\Bigr)\,dx\,d\tau\\
&\qquad\qquad\qquad+\int_t^{2T}\!\int_{\Sigma_\tau}\!\partial_q\Bigl(w\bigl(\varphi(\frac{r}{t+1})-1\bigr)(r\mathcal{R}_L+rv_L)\Bigr)\Re\Bigl(\frac{\jb{t+r}^2}{r^2}L(rZ^Iu)2i\widetilde{\chi}(\frac{t}{T})\overline{Z^I u}\Bigr)\,dx\,d\tau\\
&\qquad\qquad\qquad+\biggl(\int_t^{2T}\!\int_{\Sigma_\tau}\!\bigl(1-\varphi(\frac{r}{t+1})\bigr)\Re\Bigl(\frac{\jb{t+r}^2}{r^2}(\Delta_{\omega}Z^Iu)i\sum_{J+K=I}\frac{\BFq}{4\pi r}Z^K\widetilde{\chi}(\frac{t}{T})\overline{Z^Ju}\Bigr)w\,dx\,d\tau\\
&\qquad\qquad\qquad\qquad+\int_t^{2T}\!\int_{\Sigma_\tau}\!\bigl(1-\varphi(\frac{r}{t+1})\bigr)\Re\Bigl(\frac{\jb{t+r}^2}{r^2}(\Delta_\omega Z^Iu)i(\mathcal{R}_L+v_L)\widetilde{\chi}(\frac{t}{T})\overline{Z^I u}\Bigr)w\,dx\,d\tau\biggr)\\
&\qquad\qquad\qquad+\biggl(\int_t^{2T}\!\int_{\Sigma_\tau}\!\bigl(\varphi(\frac{r}{t+1})-1\bigr)\Re\Bigl(\jb{t+r}^2(\Box Z^Iu)i\sum_{J+K=I}\frac{\BFq}{4\pi r}Z^K\widetilde{\chi}(\frac{t}{T})\overline{Z^Ju}\Bigr)w\,dx\,d\tau\\
&\qquad\qquad\qquad\qquad+\int_t^{2T}\!\int_{\Sigma_\tau}\!\bigl(\varphi(\frac{r}{t+1})-1\bigr)\Re\Bigl(\jb{t+r}^2(\Box Z^Iu)i(\mathcal{R}_L+v_L)\widetilde{\chi}(\frac{t}{T})\overline{Z^I u}\Bigr)w\,dx\,d\tau\biggr)\\
&\qquad\qquad\equiv\mathcal{U}_a+\mathcal{U}_b+\mathcal{U}_c+\mathcal{U}_d+\mathcal{U}_e.
\end{align*}
We note that the estimates for $\mathcal{U}_a$, $\mathcal{U}_b$, $\mathcal{U}_c$ and $\mathcal{U}_d$ are analogous to those for $\mathcal{V}_a$, $\mathcal{V}_b$, $\mathcal{V}_c$ and $\mathcal{V}_d$ respectively. Correspondingly, we have
\begin{equation}
\begin{split}
\abs{\mathcal{U}_a}&\lesssim (D_2^2+\mathcal{E}_1(t)^{1/2})\mathcal{E}_{\abs{I}}(t),\\
\abs{\mathcal{U}_b}&\lesssim D_1^2\Bigl(\int_t^{2T}\!\frac{\mathcal{E}_{\abs{I}}(\tau)}{\jb{\tau}}\,d\tau+S_{\abs{I}}(t)\Bigr),\\
\abs{\mathcal{U}_c}&\lesssim(D_2^2+\mathcal{E}_1(t)^{1/2})\int_t^{2T}\!\frac{\mathcal{E}_{\abs{I}}(\tau)}{\jb{\tau}}\,d\tau+(D_3^2+\mathcal{E}_2(t)^{1/2})S_{\abs{I}}(t),\\
\abs{\mathcal{U}_d}&\lesssim(D_3^2+\mathcal{E}_2(t)^{1/2})\int_t^{2T}\!\frac{\mathcal{E}_{\abs{I}}(\tau)}{\jb{\tau}}\,d\tau.
\end{split}
\end{equation}
Finally, for the term $\mathcal{U}_e$, we see that
\[
\mathcal{U}_e\equiv\mathcal{U}_e^1+\mathcal{U}_e^2+\mathcal{U}_e^3+\mathcal{U}_e^4+\mathcal{U}_e^5
\]
where
\begin{align*}
\mathcal{U}_e^j&=\int_t^{2T}\!\int_{\Sigma_\tau}\!\bigl(\varphi(\frac{r}{t+1})-1\bigr)\Re\biggl(\jb{t+r}^2\mathcal{U}_ji\Bigl(\sum_{J+K=I}\frac{\BFq}{4\pi r}Z^K\widetilde{\chi}(\frac{t}{T})\overline{Z^Ju}+(\mathcal{R}_L+v_L)\widetilde{\chi}(\frac{t}{T})\overline{Z^I u}\Bigr)\biggr)w\,dx\,d\tau.
\end{align*}
Following the way we control $\mathcal{V}_e^1$, $\mathcal{V}_e^3$, $\mathcal{V}_e^4$ and $\mathcal{V}_e^5$, we find that 
\begin{equation}
\begin{split}
\abs{\mathcal{U}_e^1}&\lesssim\frac{D_{\abs{I}+2}(D_2^2+\mathcal{E}_1(t)^{1/2})}{\jb{t}^{\frac{\gamma}{2}-\frac{\mu}{2}-\frac{1}{4}}}S_{\abs{I}}(t)^{1/2},\\
\abs{\mathcal{U}_e^3}&\lesssim (D_2^2+\mathcal{E}_1(t)^{1/2})D^2_{\abs{I}+2}\Bigl(\int_t^{2T}\!\frac{\mathcal{E}_{\abs{I}}(\tau)}{\jb{\tau}}\,d\tau+S_{\abs{I}}(t)\Bigr),\\
\abs{\mathcal{U}_e^4}&\lesssim (D_2^2+\mathcal{E}_1(t)^{1/2})D_{\abs{I}+2}\mathcal{E}_k(t)^{1/2}\Bigl(\int_t^{2T}\!\frac{\mathcal{E}_{\abs{I}}(\tau)}{\jb{\tau}}\,d\tau+S_{\abs{I}}(t)\Bigr),\\
\abs{\mathcal{U}_e^5}&\lesssim(D_2^2+\mathcal{E}_1(t)^{1/2})\mathcal{E}_k(t)\int_t^{2T}\!
\frac{\mathcal{E}_{\abs{I}}(\tau)}{\jb{\tau}}\,d\tau.
\end{split}
\end{equation}
It remains to bound $\mathcal{U}_e^2$. We define
\begin{align*}
\mathcal{N}_u&:=\int_t^{2T}\!\int_{\Sigma_\tau}\!\Re\biggl(2\jb{t+r}^2\Bigl(\sum_{J+K=I}\frac{\BFq}{4\pi r}\partial_q\bigl(Z^K\widetilde{\chi}(\frac{t}{T})Z^Ju\bigr)+(\mathcal{R}_L+v_L)\partial_q\bigl(\widetilde{\chi}(\frac{t}{T})Z^I u\bigr)\Bigr)\\
&\qquad\qquad\times\Bigl(\sum_{J+K=I}\frac{\BFq}{4\pi r}Z^K\widetilde{\chi}(\frac{t}{T})\overline{Z^Ju}+(\mathcal{R}_L+v_L)\widetilde{\chi}(\frac{t}{T})\overline{Z^I u}\Bigr)\biggr)\bigl(\varphi(\frac{r}{t+1})-1\bigr)w\,dx\,d\tau
\end{align*}
First, by ~\eqref{eq:u2ex_in_null} and ~\eqref{eq:expression_of_u2}, it holds that
\[
\abs{\mathcal{U}_e^2-\mathcal{N}_u}\lesssim(D_2^2+\mathcal{E}_1(t)^{1/2})(D_{\abs{I}+2}+\mathcal{E}_k(t)^{1/2})\Bigl(\int_t^{2T}\!\frac{\mathcal{E}_{\abs{I}}(\tau)}{\jb{\tau}}\,d\tau+S_{\abs{I}}(t)\Bigr).
\]
Next, we notice that we can express $\mathcal{N}_u$ as
\begin{align*}
\mathcal{N}_u&=\int_t^{2T}\!\int_{\Sigma_\tau}\!\jb{t+r}^2\bigl(\varphi(\frac{r}{t+1})-1\bigr)\biggl(\frac{\BFq^2}{16\pi^2 r^2}\partial_q\bigl\lvert\sum_{J+K=I}Z^K\widetilde{\chi}(\frac{t}{T})Z^Ju\bigr\rvert^2\\
&\qquad+\frac{\BFq}{2\pi r}(\mathcal{R}_L+v_L)\Re\Big(\partial_q\bigl(\widetilde{\chi}(\frac{t}{T})Z^I u\sum_{J+K=I}Z^K\widetilde{\chi}(\frac{t}{T})\overline{Z^Ju}\bigr)\Big)+(\mathcal{R}_L+v_L)^2\partial_q\bigl\lvert\widetilde{\chi}(\frac{t}{T})Z^Iu\bigr\rvert^2\biggr)w\,dx\,d\tau.
\end{align*}
Finally, by integration by parts in $q$ direction and the spacetime version of Hardy inequality, it follows that 
\[
\abs{\mathcal{N}_u}\lesssim(D_3^2+\mathcal{E}_2(t)^{1/2})^2\Bigl(\mathcal{E}_{\abs{I}}(t)+\int_t^{2T}\!\frac{\mathcal{E}_{\abs{I}}(\tau)}{\jb{\tau}}\,d\tau+S_{\abs{I}}(t)\Bigr).
\]

\subsection{Proof of Proposition ~\ref{prop:estimate_for_rmkg}} After the preparations in the previous subsections, we are now ready to complete the proof of Proposition ~\ref{prop:estimate_for_rmkg}. We set
\[B(t)=\mathcal{E}_k(t)+S_k(t) \quad \text{with}\quad k=N-3.\]
\begin{proof}[Proof of Proposition ~\ref{prop:estimate_for_rmkg}]
	Applying Corollary ~\ref{cor:energyestimate_for_u} and combining all the above estimates together, in view of the smallness assumption $D_N=\ep$ we conclude that 
	\begin{equation}\label{eq:bootstrap_ineq}
	B(t)\lesssim \frac{\ep}{\jb{t}^{\frac{\gamma}{2}-\frac{\mu}{2}-\frac{1}{4}}}B(t)^{1/2}+\ep B(t)+B(t)^{3/2}+B(t)^2+\Bigl(\ep+B(t)^{1/2}+B(t)+B(t)^{3/2}\Bigr)\int_t^{2T}\!\frac{B(\tau)}{\jb{\tau}
}\,d\tau.	
\end{equation}
Then a standard bootstrap argument yields that there exists a sufficiently small constant $\ep_0$ such that if $0<\ep\leq \ep_0$, we have
\[
B(t)\lesssim\frac{\ep^2}{\jb{t}^{\gamma-\mu-1/2}}.
\]
with $\mu<\gamma-1/2$. Then the estimates ~\eqref{eq:energy_in_prop} and ~\eqref{eq:decay_in_prop} asserted in Proposition ~\ref{prop:estimate_for_rmkg} follow from Proposition ~\ref{prop:control_of_norm} and Lemma ~\ref{lem:KS_with_weight} respectively. 
\end{proof}

\section{ MKG equations}\label{sec:5}

In Section ~\ref{sec:4}, we have obtained a sequence of solutions $(\phi_T, A_{\alpha T})=(\phi_{app}+u_T, A_{\alpha, app}+v_{\alpha T})$ to the equations ~\eqref{eq:rmkg_with_cutoff} with trivial data for $(u_T, v_T)$ at $t=2T$. In this section we show that the limit $(\phi, A_{\alpha})=\lim_{T\to\infty}(\phi_T, A_{\alpha T})$ exists in the norms defined by ~\eqref{eq:norm1} and the limit solution indeed solves the MKG equations ~\eqref{eq:mkg_in_A}.
\subsection{Existence of the limit}Let $T_2>T_1$. By Proposition ~\ref{prop:estimate_for_rmkg}, we have two corresponding solutions $(\phi_1, A_{\alpha,1})=(\phi_{T_1}, A_{\alpha T_1})$ and $(\phi_2, A_{\alpha, 2})=(\phi_{T_2}, A_{\alpha T_2})$. let $(u_j, v_{\alpha, j})=(\phi_j-\phi_{app}, A_{\alpha,j}-A_{\alpha, app})$ for $j=1,2$. We know that $(u_j, v_{\alpha, j})$ has vanishing data at $t=2T_j$ and $(\phi^j, A_{\alpha, j})$ solves the reduced MKG equations ~\eqref{eq:rmkg} for $t\leq T_j$. Our goal is to prove that the difference $(\hat{u}, \hat{v}_\alpha)=(u_2-u_1, v_{\alpha, 2}-v_{\alpha,1})$ tends to $0$ in the norm defined in ~\eqref{eq:norm1} as $T_2>T_1\to\infty$.

\medskip

First, we derive the equations which $(Z^I\hat{u}, Z^I\hat{v}_\alpha)$ satisfies for $t\leq T_1$ and $\abs{I}\leq N-5$ with $N\geq 7$
\begin{align}
\Box Z^I\hat{v}_\alpha&=(Z^I+\sum_{\abs{J}<\abs{I}-1}c_J^IZ^J)\Big[-\Im\bigl(\hat{u}\overline{\partial_\alpha\phi_2}\bigr)-\Im\big(\phi_1\overline{\partial_\alpha\hat{u}}\big)+\hat{v}_\alpha\abs{\phi_2}^2+A_{\alpha, 1}\hat{u}\overline{\phi_2}+A_{\alpha,1}\phi_1\overline{\hat{u}}\Big],\\
\Box Z^I\hat{u}&=(Z^I+\sum_{\abs{J}<\abs{I}-1}c_J^IZ^J)\Big[-2i\hat{v}^\alpha\partial_\alpha\phi_2-2iA^\alpha_1\partial_\alpha\hat{u}+\hat{v}^\alpha A_{\alpha, 2}\phi_2+A_1^\alpha\hat{v}_\alpha\phi_2+A_1^\alpha A_{\alpha, 1}\hat{u}\Big].
\end{align}
Next, we set
\[
\hat{B}(t)=\hat{\mathcal{E}}_k(t)+\hat{S}_k(t)\quad \text{for}\quad k=N-5 \quad\text{and}\quad t\leq T_1
\]
with
\begin{align*}
\hat{\mathcal{E}}_k(t)&=\sup_{t\leq \tau\leq T_1}\sum_{\abs{I}\leq k}E^w[Z^I\hat{v}_\alpha](\tau)+E^w[Z^I\hat{u}](\tau),\\
\hat{S}_k(t)&=\sum_{\abs{I}\leq k}\int_t^{T_1}\!\int_{\Sigma_\tau}\!\jb{t+r}^2\abs{\frac{1}{r}L(rZ^I\hat{u})}^2+\jb{t+r}^2\abs{\frac{1}{r}L(rZ^I\hat{v}_{\alpha})}^2\abs{w'}\,dx\,d\tau.
\end{align*}
with $w$ defined in Proposition ~\ref{prop:estimate_for_rmkg}. We also note that $(u_1, v_{\alpha, 1})$ and $(u_2, v_{\alpha, 2})$ satisfy ~\eqref{eq:energy_in_prop} and ~\eqref{eq:decay_in_prop}. Moreover, the constants in these estimates are independent of $T$.

Then we have for all $0\leq t\leq T_1$
\begin{equation}\label{eq:existence_of_limit}
	\hat{B}(t)\leq C\frac{\ep^2}{(1+T_1)^{\gamma-\mu-1/2}}+C\ep\int_t^{T_1}\!\frac{\hat{B}(\tau)}{1+\tau
}\,d\tau
\end{equation} 
where $C$ only depends on $\gamma, \mu, N$. Then by Gr\"onwall, it follows that 
\[
\hat{B}(t)\leq\frac{C\ep^2}{(1+T_1)^{\gamma-\mu-1/2}}\bigl(\frac{1+T_1}{1+t}\bigr)^{C\ep}.
\]We skip the proof of ~\eqref{eq:existence_of_limit} here as it is an analogy to the proof of ~\eqref{eq:bootstrap_ineq}. By Proposition ~\ref{prop:control_of_norm}, it holds that for $\abs{I}\leq N-4$
\[
\norm{Z^I\hat{u}}_{L^2(w)}+\norm{Z^I \hat{v}_\alpha}_{L^2(w)}\lesssim\frac{\ep}{(1+T_1)^{\frac{\gamma}{2}-\frac{\mu}{2}-\frac{1}{4}}}\bigl(\frac{1+T_1}{1+t}\bigr)^{\frac{C\ep}{2}}\to 0 
\]
as $T_2>T_1\to\infty$ if $\ep$ is sufficiently small. This proves the existence of the limit $(\phi, A_\alpha)=\lim_{T\to\infty}(\phi_T, A_{\alpha T})$. Moreover, by the weighted Klainerman-Sobolev inequality from Lemma~\ref{lem:KS_with_weight}, we can also obtain the following pointwise convergence for $\abs{I}\leq N-6$
\[
\sup_{t\leq T_1, x\in\BR^3}\abs{Z^I\hat{u}}+\abs{Z^I\hat{v}_\alpha}\lesssim\sup_{t\leq T_1, r\geq 0}\frac{\ep}{(1+T_1)^{\frac{\gamma}{2}-\frac{\mu}{2}-\frac{1}{4}}}\bigl(\frac{1+T_1}{1+t}\bigr)^{\frac{C\ep}{2}}\frac{1}{\jb{t+r}\jb{t-r}^{\frac{1}{2}}w^{\frac{1}{2}}}\to 0 
\]
as $T_2>T_1\to\infty$. We note that the estimates ~\eqref{eq:energy_in_prop} and ~\eqref{eq:decay_in_prop} also hold for $(Z^I\phi, Z^IA_\alpha)$ with $\abs{I}\leq N-4$ and $\abs{I}\leq N-6$ respectively.

By letting $T\to\infty$ in the equations ~\eqref{eq:rmkg_with_cutoff}, we arrive at the conclusion that $(\phi, A_\alpha)=\lim_{T\to\infty}(\phi_T, A_{\alpha T})$ is a solution to the reduced MKG equation ~\eqref{eq:rmkg}.
%The uniqueness of the solution follows from the arguments we used to establish the existence.

\subsection{Proof of Theorem ~\ref{thm:mainthm}}
In this subsection we show that the limit solution we obtained in the last subsection to the reduced MKG equations ~\eqref{eq:rmkg} solves the MKG equations ~\eqref{eq:mkg_in_A}  if the \emph{asymptotic Lorenz gauge condition} is satisfied.
\begin{proof}[Proof of Theorem ~\ref{thm:mainthm}]
Once we obtain a sequence of solutions $(\phi_T, A_{\alpha T})$ to the equations ~\eqref{eq:rmkg_with_cutoff}, we consider the term $\lambda_T=\partial^\alpha A_{\alpha T}$. 
Since $\lambda_T$ solves the equation ~\eqref{eq:eq_for_lorenz}
	\[
	\Box \lambda_T=\abs{\phi_T}^2\lambda_T,
	\]
	we apply Proposition ~\ref{prop:weightedenergyestimate} with $w\equiv 1$ to conclude that 
	\begin{align*}
\sum_{\abs{I}\leq 2}E[Z^I\lambda_T](t)&\leq	\sum_{\abs{I}\leq 2}E[Z^I\lambda_T](T)+\sum_{\abs{I}\leq 2}\int_t^T\!\int_{\Sigma_\tau}\bigl\lvert\frac{2}{r}\overline{K}_0(r\lambda_T)\Box Z^I \lambda_T\bigr\rvert\,dx\,d\tau\\
&\leq C\sum_{\abs{I}\leq 2}E[Z^I\lambda_T](T)+C\int_t^T\!\frac{\ep^2}{1+\tau}\sum_{\abs{I}\leq 2}E[Z^I\lambda_T](\tau)\,d\tau
	\end{align*}
	from which we can infer by Gr\"onwall that 
	\[
\sum_{\abs{I}\leq 2}E[Z^I\lambda_T](t)	\leq\sum_{\abs{I}\leq 2}CE[Z^I\lambda_T](T)
\bigl(\frac{1+T}{1+t}\bigr)^{C\ep^2}
\]
where $E[Z^I\lambda_T](t)$ denotes $E^w[Z^I\lambda_T](t)$ with $w\equiv 1$. Therefore, it remains to determine $\sum_{\abs{I}\leq 1}E[Z^I\lambda_T](T)$. By the 
\emph{asymptotic Lorenz gauge condition} and Proposition ~\ref{prop:asy_Lorenz}, we find that 
\[
\sum_{\abs{I}\leq 3}Z^I\partial^\alpha A_{\alpha, app}
=\mathcal{O}\Bigl(\frac{\ep}{\jb{t+r}^2}\ln\frac{\jb{t+r}}{\jb{t-r}}\chi\bigl(\frac{\jb{t-r}}{r}\bigr)\Bigr)+\mathcal{O}(\frac{\ep^2}{\jb{t+r}^m})
\]
for any $m>0$. This implies
\[
\sum_{\abs{I}\leq 2}E[Z^I\lambda_T](T)=\sum_{\abs{I}\leq 2}E[Z^I\partial_\alpha A_{\alpha, app}](T)+\sum_{\abs{I}\leq 2}E[Z^I\partial^\alpha v_{\alpha T}](T)\lesssim\frac{\ep^2}{\jb{T}^{\gamma-\mu-1/2}}.
\]
Then by Proposition ~\ref{prop:control_of_norm} and Klainerman-Sobolev inequality, it follows that uniformly in $(t,x)$
\[
\sum_{\abs{I}\leq 1}\abs{Z^I\lambda_T(t, x)}\to0
\]
as $T\to\infty$ if $\ep$ is sufficiently small. This finishes the proof of Theorem ~\ref{thm:mainthm}.
\end{proof}
\bibliographystyle{plain}
\bibliography{references}

\end{document}